\documentclass[reqno]{amsart}
\usepackage{epsfig}
\usepackage{color}
\usepackage{amssymb,amsmath,amsthm,amstext,amsfonts}

\usepackage{amssymb,amsmath,amsthm,amstext,amsfonts}
\usepackage{amsmath,amstext,amsthm,amsfonts}
\usepackage{color}
\usepackage[mathscr]{eucal}
\usepackage{dsfont}


\usepackage{psfrag}
\usepackage{url}
\usepackage{epstopdf}
\usepackage{epic,eepic}
\usepackage{upgreek}
\usepackage{amssymb}
\usepackage{mathrsfs}
\usepackage{verbatim}
\usepackage{mathrsfs}
\usepackage{amstext}
\usepackage{amsthm}
\usepackage{amssymb}
\usepackage{graphicx}

\pagestyle{plain} \pagenumbering{arabic}

\makeatletter \@addtoreset{equation}{section} \makeatother

\renewcommand\thetable{\thesection.\@arabic\c@table}

\theoremstyle{plain}

\newtheorem{theorem}{Theorem}[section]
\newtheorem{proposition}{Proposition}[section]
\newtheorem{lemma}{Lemma}[section]
\newtheorem{corollary}{Corollary}[section]
\newtheorem{definition}{Definition}[section]
\newtheorem{remark}{Remark}[section]
\newtheorem{example}{Example}[section]

\newcommand{\al} {\alpha}

\newcommand{\vep}{\varepsilon}

\newcommand{\R}{\mathbb{R}}

\newcommand{\cC}{\mathcal{C}}

\newcommand{\cF}{\mathcal{F}}

\newcommand{\cU}{\mathcal{U}}

\newcounter{main}

\title{The centralizer of Komuro-expansive flows and expansive $\mathbb R^d$ actions}

\author{W. Bonomo and J. Rocha and  P. Varandas}

\address{Wescley Bonomo,
Universidade Federal do Esp\'irito Santo, CEUNES,
Rodovia Governador Mario Covas, Km 60, 29.932-900, S\~ao Mateus, Brazil.}
              \email{wescley.bonomo@ufes.br}

\address{Jorge Rocha,
Departamento de Matem\'atica, Universidade do Porto,
Rua do Campo Alegre, 687,  4169-007 Porto, Portugal.}
              \email{jrocha@fc.up.pt}

\address{Paulo Varandas,
Departamento de Matem\'atica, Universidade Federal da Bahia,
Av. Ademar de Barros s/n, 40170-110 Salvador, Brazil}
\email{paulo.varandas@ufba.br}

\begin{document}

\maketitle

\begin{abstract}
In this paper we study the centralizer of flows and $\mathbb R^d$-actions on compact Riemannian 
manifolds. We prove that the centralizer of every $C^\infty$ Komuro-expansive flow with non-resonant singularities 
is trivial, meaning it is the smallest possible, and  deduce there exists an open and dense subset of geometric Lorenz 
attractors with trivial centralizer.  We show that $\mathbb R^d$-actions obtained as suspension of $\mathbb Z^d$-actions are expansive if and only if  the same holds for the $\mathbb Z^d$-actions. We also show that
homogeneous expansive $\mathbb R^d$-actions have quasi-trivial centralizers, meaning that it consists
of orbit invariant, continuous linear reparameterizations of the $\mathbb R^d$-action. In particular, 
homogeneous Anosov $\mathbb R^d$-actions have quasi-trivial centralizer.
\keywords{
Expansive flows \and trivial centralizers \and $\mathbb R^d$-actions \and singular-hyperbolicity \and geometric Lorenz attractors} 

 \footnotetext{2000 {\it Mathematics Subject classification}:
37C10,  37D20, 37D25, 37C85
}

\end{abstract}

\section{Introduction}

One of the leading problems considered by the dynamical systems community has been to describe the features of most dynamical systems. Based on the pioneering works of Peixoto and Smale, the program proposed by Palis
in the nineties has constituted a route guide for a global itemization of the space of dynamical systems.
This program, that proposed the complement of uniform hyperbolicity as the space of diffeomorphisms that are approximated by those exhibiting either heteroclinic tangencies or heteroclinic cycles, was carried out much successfully
in the $C^1$-topology, where perturbation tools like the closing lemma, Franks' lemma, connecting lemma or ergodic  closing lemma are available (see \cite{Ma1} and \cite{Hay}).

In the seminal papers \cite{Sm90,Sm}, Smale conjectured that most dynamical systems 
should have trivial centralizer, a hard problem not yet completely understood. Given a $\mathcal{C}^r$-diffeomorphism
$f$ on a compact manifold $M$, its $\mathcal C^r$-centralizer $\mathcal{Z}^r(f) = \{g \in \text{Diff}^{\,r}(M): f \circ g = g \circ f\}$ is a subgroup of $\text{Diff}^{\,r}(M)$, $r \geq 1$. In some sense, the centralizer reflects symmetries of the dynamics which typically should be rare. The problem of the centralizer is related e.g. with the embedding of maps as time-1 maps of flows
\cite{Palis} or the problem of differentiability of conjugacies ~\cite{Yoccoz}. 
In the discrete-time setting some results in the direction of a positive answer to Smale's conjecture are known. 
In the seventies, Walters~\cite{Wa70} proved that
expansive homeomorphisms have discrete centralizers, and Kopell~\cite{Ko70} established that an open and dense subsets of $C^r$ ($r\ge 2$) 
circle diffeomorphisms have trivial centralizer. Palis and Yoccoz~\cite{PY89} proved that an open and dense subset of $C^\infty$-
Axiom A diffeomorphisms with the strong transversality property and that admit a periodic sink have trivial centralizers. Similar results were established by
Rocha~\cite{Ro93} and Fisher~\cite{Fi09} for surface Axiom A diffeomorphisms and diffeomorphisms with codimension one hyperbolic attractors, respectively. 
In the case of dynamical systems beyond the scope of uniform hyperbolicity, there exists a residual subset of certain classes 
of $C^r$, $r\ge 1$, partially hyperbolic diffeomorphisms with discrete centralizer (cf. Burslem~\cite{Bu04}).
Moreover, it is worth mentioning that there are open sets of surface Anosov diffeomorphisms whose
$C^0$-centralizer is discrete but not trivial ~\cite{Ro05},
and that the subset of diffeomorphisms with trivial centralizer in the $C^1$ topology has nonempty interior ~\cite{BF}.
Finally, there exists a $C^1$-residual subset of $\text{Diff}^1(M)$ with trivial centralizer \cite{BCW},
and the set of $C^1$-diffeomorphisms with trivial centralizer is not open and dense \cite{BCVW}.

In the time-continuous setting the picture is still much more incomplete. In opposition to the discrete time setting the centralizer is never discrete. Indeed, using that the flow commutes with itself, the centralizer of a flow clearly contains 
a continuum. 
Some advances to establish the counterpart of Smale's conjecture for Anosov, $C$-expansive and Axiom A flows 
with the strong transversality were obtained in ~\cite{Kato},~\cite{oka}, ~\cite{Sad2} and~\cite{JH91}.
Such a description of the centralizer can be given in terms of the flow or of the generating vector field.
However, flows with some weak hyperbolicity and where singularities accumulated by regular orbits 
of the flow have not yet been considered, and this is a goal of the present work.
These include important classes of flows as three-dimensional $C^1$-robustly transitive flows with singularities,
often referred as singular-hyperbolic flows. These are partially hyperbolic flows with an invariant splitting in a 
one-dimensional contracting and a two-dimensional volume expanding invariant subbundles for the vector field $X$ or
the vector field $-X$. Any singular-hyperbolic flow without singularities is an Anosov flow. Hence, if 
the manifold does not support Anosov flows then the singular set of a singular-hyperbolic flow is non-empty.
Moreover, singular-hyperbolic attractors include the important classes of examples of geometric Lorenz attractors,
introduced by Afraimovich, Bykov and Shilnikov \cite{lorAfraimovich} and Guckenheimer, Williams \cite{lorGuckenheimer} 
to model the chaotic attractor proposed by E. Lorenz~\cite{Lorenz}.
We refer the reader to ~\cite{vitorzeze} for precise definitions and a large account on singular-hyperbolicity.

Our first purpose in the current article is to describe the centralizer of a class of flows that contains the important classes of geometric Lorenz attractors. These admit a weak form of expansiveness (the so called Komuro-expansiveness) which is compatible with the coexistence of regular and singular orbits in the same transitive piece of the non-wandering set. 
We prove first that for a Komuro-expansive flow any commuting flow is a continuous linear reparameterization of the original flow. 
If, in addition, the singularities satisfy an (open and dense) non-ressonance condition then the centralizer of such a vector 
field $X$ consists of the vector fields of the form $cX$ for some $c\in \mathbb R$ on the closure of the stable manifolds of the singularities (cf. Theorem~\ref{thm:flows} and Corollary~\ref{cor:centr}). 
As a byproduct of these results we conclude that the orbits of $\mathbb R^d$-actions that admit some expansive 
element are indeed one-dimensional, a fact that holds e.g. for Anosov actions 
(cf. Corollary~\ref{cor:actions}). 
As a second purpose we also obtain a systematic treatment of the centralizer of expansive $\mathbb R^d$-actions.
In the case of $\mathbb R^d$-actions that are suspension of $\mathbb Z^d$-actions, expansiveness is either a common feature or it fails for both actions (cf. Theorem~\ref{thm:suspensions}). 
 We also prove that the centralizer of expansive ``typical homogeneous'' $\mathbb R^d$-actions is also reduced to continuous reparameterizations. We refer the reader to Theorem~\ref{centacaoespansiva} for the precise statement.

This paper is organized as follows. In Section~\ref{sec:statements} we introduce some
definitions and state the main results of this paper.
Section~\ref{sec:examples} is devoted to present some examples and a wider discussion and comparison of
our results with other notions of expansiveness for flows.
In Section~\ref{sec:flows} we study the centralizer of expansive flows with singularities.
The results on the expansiveness properties and centralizer of $\mathbb R^d$-actions are given along
Sections~\ref{sec:actions1} and ~\ref{sec:actions2}.

\section{Preliminaries and statement of the main results}\label{sec:statements}

\subsection{Preliminaries}

In this subsection we shall introduce some definitions and recall some necessary background, with the intention of making the text as self-contained as possible. 
Throughout we let $M$ be a compact Riemannian manifold.

\subsubsection{Uniform hyperbolicity}\label{unifh}

Given a $C^r$-diffeomorphism $f$ on $M$, $r\ge 1$, a set $\Lambda \subset M$ is \emph{uniformly hyperbolic} 
if there is a $Df$-invariant splitting $T_\Lambda M = E^s \oplus E^u$ and constants $C>0$ and $\lambda\in (0,1)$ so that 
$$
\| Df^n(x)\mid_{E^s_x} \| \le C \lambda^n 
	\quad\text{and}\quad
	\| (Df^n(x)\mid_{E^u_x})^{-1} \| \le C \lambda^n 
$$
for every $x\in \Lambda$ and $n\ge 1$. We refer to $T_{\Lambda} M = E^s \oplus E^u$ as the \emph{hyperbolic splitting} associated to $f$ and $\Lambda$.

Let ${\text{Per}(f)}$ denote the set of periodic points and $\Omega(f)$ denote the non-wandering set of $f$.
A $C^1$-diffeomorphism $f$ is called \emph{Axiom A} if $\overline{\text{Per}(f)} = \Omega(f)$ and $\Omega(f)$ is a uniformly hyperbolic set. The diffeomorphism $f$ is \emph{Anosov} if the manifold $M=\Lambda$ is a hyperbolic set for $f$.

The natural counterpart for flows is defined as follows. Given a vector field $X\in \mathfrak{X}^r(M)$, $r\ge 1$, let  $(\varphi_t)_{t\in \mathbb R}$ denote the $C^r$-flow on $M$ generated by $X$. 
Recall that $\sigma \in M$ is a \emph{hyperbolic singularity} for $X\in \mathfrak{X^1}(M)$ provided
$X(\sigma)=0$ and $DX(\sigma)$ does not contain any purely imaginary eigenvalue. 
Furthermore, we say that a hyperbolic singularity $\sigma$ for $X\in \mathfrak{X^1}(M)$ is \emph{non-resonant} if 
the eigenvalues $\al_1, \dots, \al_k \in \mathbb C$ of $DX(\sigma)_{|E^{u}_\sigma}$ 
(resp. the eigenvalues $\beta_1, \dots, \beta_m \in \mathbb C$ of $DX(\sigma)_{|E^{s}_\sigma}$)
are all distinct and do not satisfy any relation of the form $\text{Re}(\alpha_i)=\sum_{j \neq i} n_j \text{Re}(\alpha_j)$
(resp. $\text{Re}(\beta_i)=\sum_{j \neq i} n_j \text{Re}(\beta_j)$)
for some non-negative integers $n_j$ so that $\sum_{j=1}^k n_j \ge 2$. 
Observe that since we consider the eigenvalues of stable and unstable bundles independently, the later 
corresponds to the singularity being separately non-resonant. Furthermore, in this case the ressonance conditions consist of
\emph{finitely} many algebraic closed equations and, consequently, are satisfied by an open and dense
subset of linear vector fields. 

Given a compact $(\varphi_t)_{t\in \mathbb R}$-invariant non-singular set $\Lambda\subset M$, we say that $\Lambda$ is a \emph{hyperbolic set} 
for $(\varphi_t)_{t\in \mathbb R}$ if there exists a $D\varphi_t$-invariant splitting $T_\Lambda M = E^s \oplus E^0 \oplus E^u$ so that:
(a) $E^0$ is one dimensional and generated by the vector field, (b) there are constants 
$C>0$ and $\lambda\in (0,1)$ so that 
$$
\| D \varphi_t (x)\mid_{E^s_x} \| \le C \lambda^t 
	\quad\text{and}\quad
	\| (D\varphi_t (x)\mid_{E^u_x})^{-1} \| \le C \lambda^t 
$$
for every $x\in \Lambda$ and $t\ge 0$. 

Given a $C^1$-flow $\varphi=(\varphi_t)_t$ we denote by $Sing(\varphi)$ the singularities of $\varphi$ and by $Crit(\varphi)$
the set of all critical elements, formed by singularities and closed orbits for the flow $\varphi$.
A flow $(\varphi_t)_t$ is called \emph{Axiom A} if 
$\overline{\text{Crit}(\varphi)}=\Omega(X)$ and the non-wandering set $\Omega(X)$ is a uniformly hyperbolic set.
The flow $(\varphi_t)_t$ is \emph{Anosov} if $\Lambda=M$ is a hyperbolic set.

We say that $\Phi : \mathbb R^d \times M \rightarrow M$ is a $\mathcal{C}^r$-action on a compact Riemannian manifold $M$ if $\Phi_v:=\Phi(v, \cdot) \colon M \to M$ is a $C^r$ diffeomorphism and $\Phi_{v+u}= \Phi_{v}\circ \Phi_{u}$ for every $v,u\in \mathbb R^d$. Following \cite{BM}, we say that a $\mathcal{C}^r$-action $\Phi : \mathbb R^d \times M \rightarrow M$ on a compact Riemannian manifold $M$ is an \emph{Anosov action} if there exists 
$v \in \mathbb R^d$ such that the diffeomorphism $\Phi_{v}$ admits a continuous $D\Phi_v$-invariant decomposition $TM=E_v^s \oplus T\Phi \oplus E_v^u$ where $T\Phi$ denotes the tangent space to the orbits of $\Phi$ and 
there are constants $C>0$ and $\lambda\in (0,1)$ so that 
$$
\| D \Phi_v^n (x)\mid_{E^s_x} \| \le C \lambda^n 
	\quad\text{and}\quad
	\| (D\Phi_v^n (x)\mid_{E^u_x})^{-1} \| \le C \lambda^n 
$$
for every $x\in M$ and $n\ge 0$. The diffeomorphism $\Phi_v$ is called an \emph{Anosov element}.

Let $\cF$ denote the orbit foliation of $\Phi$ and let $\cF(x)$ denote the leaf of the foliation containing the point $x$. 
It follows from \cite[Theorem~7.2]{HPS77} that $(\Phi_v,\cF)$ is a 
plaque expansive diffeomorphism: there exists $\overline{\delta} > 0$ such that if $(x_n)_{n \in \mathbb{Z}}$ and $(y_n)_{n \in \mathbb{Z}}$ are $\overline\delta$-pseudo-orbits  preserving $\cF$ (ie. $d(\Phi_v(x_n),x_{n+1})<\overline{\delta}$,
 $d(\Phi_v(y_n),y_{n+1})<\overline{\delta}$, $\Phi_v(x_n) \in \cF_\delta(x_{n+1})$ and $\Phi_v(y_n) \in \cF_\delta(y_{n+1})$ for all $n\in \mathbb Z$) and the pair of points $x_n,y_n$ remains $\overline
 \delta$-close for all $n$, then $y_n \in \mathcal{F}_\delta(x_n)$ for every $n\in \mathbb Z$.

\subsubsection{Expansiveness}\label{sec:expansiveness}

First we shall recall the notion of expansiveness in the discrete time setting. Given a homeomorphism $f\in \text{Homeo}(M)$ and a compact invariant set $\Lambda \subset M$, we say that $f$ is \emph{expansive} in $\Lambda$ if there exists $\delta > 0$ so that for all $x, \, y \in \Lambda$ satisfying $d(f^n(x),f^n(y)) \le \delta$ for every 
$n\in \mathbb Z$ one has $x=y$.
In the time-continuous setting of flows, due to the possible presence of singularities, there are several notions of 
expansiveness (see e.g.~\cite{BW72,oka90}). We recall some of these notions, starting by the one introduced by Bowen and Walters~\cite{BW72}.

\begin{definition} Let $(M, \, d)$ be a compact metric space, $\varphi: \mathbb{R} \times M \rightarrow M$ be a continuous flow, and $\Lambda \subseteq M$ be a compact $\varphi$-invariant set. We say that the flow $\varphi$ is \emph{$C$-expansive in $\Lambda$} if for any $\varepsilon > 0$ there exists $\delta > 0$ so that if $x, \, y \in \Lambda$ and $d(\varphi_t(x), \, \varphi_{h(t)}(y)) < \delta$ for all $t \in \mathbb{R}$ for some continuous function $h: \mathbb{R} \rightarrow \mathbb{R}$ satisfying $h(0) = 0$, then $y = \varphi_{t_0}(x)$ for some $|t_0| < \varepsilon$. 
\end{definition}

The later means that, for expansive flows, orbits of two points $x, \, y$ by the flow that always remain close to each other
(up to reparameterization) do coincide. Singularities of a $C$-expansive flow are necessarily isolated points \cite[Lemma~1]{BW72}. Moreover, $C$-expansive flows on connected manifolds do not admit singularities.
A weaker notion, as follows, was introduced later by Keynes and Sears~\cite{Keynes}.

\begin{definition}\label{kexpansiveness} 
Let $(M, \, d)$ be a compact metric space, $\varphi: \mathbb{R} \times M \rightarrow M$ be a continuous flow, and $\Lambda \subseteq M$ be a compact $\varphi$-invariant set. We say that the flow $\varphi$ is  
\emph{$K$-expansive in $\Lambda$}  if for any $\varepsilon > 0$ there exists $\delta > 0$ so that
if $x, \, y \in \Lambda$ and $d(\varphi_t(x), \, \varphi_{h(t)}(y)) < \delta$ for all $t \in \mathbb{R}$ 
for some increasing homeomorphism $h: \mathbb{R} \rightarrow \mathbb{R}$ with $h(0) = 0$ then
$y = \varphi_{t_0}(x)$ for some $|t_0| < \varepsilon$. 
\end{definition}

Although the later is more general, these two notions are indeed equivalent in the case that $M$ is a compact Riemannian manifold (see e.g. \cite{vitorzeze}). Motivated by the analysis of flows with non-isolated singularities in the non-wandering set as the classical geometric Lorenz attractors, Komuro~\cite{komuro} introduced a more general notion of expansiveness  that we now describe.

\begin{definition}\label{expansiveflow}
Let $(M, \, d)$ be a compact metric space, $\varphi: \mathbb{R} \times M \rightarrow M$ be a continuous flow, and $\Lambda \subseteq M$ be a compact $\varphi$-invariant set. We say that the flow $\varphi$ is \emph{Komuro-expansive in $\Lambda$} if for any $\varepsilon > 0$ there exists $\delta > 0$ so that if $x, \, y \in \Lambda$ and $d(\varphi_t(x), \, \varphi_{h(t)}(y)) < \delta$ for every $t \in \mathbb{R}$ and for some increasing homeomorphism $h: \mathbb{R} \rightarrow \mathbb{R}$ then there is $t_0\in \mathbb R$ such that $\varphi_{h(t_0)}(y) \in \varphi_{[t_0 - \varepsilon, \, t_0 + \varepsilon]}(x)$. Here, as usual,
$\varphi_{[t_0 - \varepsilon, \, t_0 + \varepsilon]}(x):=\{ \varphi_{t}(x) \colon t\in [t_0 - \varepsilon, \, t_0 + \varepsilon]\}$.
\end{definition}

Sometimes Komuro-expansive flows are simply called expansive in literature, and we keep this nomenclature.
It follows from the previous definitions that
\begin{equation}\label{compexpansiveness}
C\textrm{-expansiveness} \Rightarrow \; K\textrm{-expansiveness} \Rightarrow \textrm{ expansiveness}.
\end{equation}
On the converse direction, these three notions of expansiveness are equivalent for flows without singularities
(cf. \cite[Theorem~A]{oka90}) but may differ for flows with singularities. 
If not explicited stated otherwise we assume $\Lambda = M$, meaning expansiveness holds in the whole manifold. 
It is well known that  uniformly hyperbolic flows are $C$-expansive and, for that reason, expansive diffeomorphisms and flows contain as 
special classes of examples the cases of Anosov diffeomorphisms and Anosov flows, respectively.

In the case of $\mathbb{R}^d$-actions, each orbit of a point $x$ in the manifold $M$ is an immersed submanifold of
dimension at most $d$.  In the case that the foliation formed by the orbits of points consist of submanifolds with 
different dimensions, this encloses the same kind of topological difficulties (in a wider range of possibilities) 
caused by the presence of singularities for flows.

\begin{definition}\label{acaoexpansivacontinua} 
Let $M$ be a compact metric space and $\Phi: \mathbb{R}^d \times M  \rightarrow M$ be a continuous action. We say that $\Phi$ is \emph{expansive} if for any $\varepsilon > 0$, there exists $\delta > 0$ so that if $x, \, y \in M$ satisfy $d(\Phi_v(x), \, \Phi_{h(v)}(y)) < \delta$ for every $v \in \mathbb{R}^d$ with respect to a continuous function $h: \mathbb{R}^d \rightarrow \mathbb{R}^d$ so that $h(0) = 0$,  then
$y = \Phi_{v_0}(x)$ for some $\|v_0\| < \varepsilon$. In particular, $y$ belongs to the orbit  of $x$ by $\Phi$.
\end{definition}

The study of the geometry and topology of the foliation by orbits of $\mathbb R^d$-actions is a hard problem 
and encloses much more difficulties than the case of flows. For instance, 
in opposition to  the case of vector fields, we do not expect 
all expansive $\mathbb R^d$-actions on compact connected Riemannian manifolds to  be homogeneous.

\subsubsection{Centralizers}

Given $r\ge 0$ and a diffeomorphism $f\in \text{Diff}^{\,r}(M)$ the \emph{centralizer of $f$}
is the set 
$$
\mathcal{Z}^r(f)=\{ g\in \text{Diff}^{\,r}(M) \colon f \circ g = g \circ f \}
$$
where, by some abuse of notation, we let $\text{Diff}^{\,0}(M)$ denote the space of homeomorphisms.
The definition for time-continuous dynamical systems is analogous.
Given $r \geq 0$, let $\mathcal F^r(M)$ denote the space of $C^r$-flows on $M$.
Given a flow $\varphi = (\varphi_t)_{t\in\mathbb R} \in \mathcal F^r(M)$, the \emph{centralizer of $\Phi$} is defined as
$$
\mathcal Z^r (\varphi)
	=\{ \psi=(\psi_t)_{t\in \mathbb R} \in \mathcal F^r(M) 
		\colon \psi_s \circ \varphi_t = \varphi_t \circ  \psi_s , \forall \, s, t\in \mathbb R\}.
$$
It is clear from the previous definition that flows obtained as reparameterizations of a flow $\varphi$ 
 belong to $\mathcal Z^r (\varphi)$. For that reason, the centralizer of a flow is never a discrete subgroup.
In the case of smooth flows, the previous characterization of centralizer has a dual formulation in terms of vector 
fields. Given $r\ge 1$ and $X\in \mathfrak{X}^r(M)$, one can define the \emph{centralizer of the vector field $X$} by
$$
\mathcal Z^r (X)
	=\{ Y \in \mathfrak{X}^r(M) 
		\colon [X,Y]=L_Y X= 0\},
$$
where $L_Y X$ denotes the Lie derivative of the vector field $X$ along $Y$.

\begin{definition} 
Given $r\ge 0$, we say a flow $\varphi=(\varphi_t)_t \in \cF^r(M)$ has \emph{quasi-trivial  centralizer} if for any $\psi \in \mathcal{Z}^r(\varphi)$  there exists a $C^r$-function $A: M \rightarrow \mathbb{R}$ so that 
\begin{itemize}
\item[(i)] (orbit invariance) $A(x) = A(\varphi_t(x))$ for every $(t, \, x) \in \mathbb{R} \times M$, and 
\item[(ii)] $\psi_t(x) = \varphi_{A(x)t} \, (x)$ for every $(t, \, x) \in \mathbb{R} \times M$.
\end{itemize}
In the case that the reparameterizations $A$ are necessarily constant then we say the centralizer is \emph{trivial}.
Dually, we say that $X \in \mathfrak{X}^r(M)$ has \emph{quasi-trivial centralizer} if for any $Y \in \mathcal{Z}^r(X)$ there
exists $A \in \mathcal{C}^r(M, \, \mathbb{R})$ so that $Y = A \cdot X$ and $X(A) = 0$. If $A$ is constant then we say that the centralizer is trivial.
\end{definition}

Observe that the previous notions for vector fields and flows are dual. On the one hand, if $X(h) = 0$ for some $h: M \to \mathbb R$ then $h$ is constant along the orbits of the flow. On the other hand, if $Y = h \cdot X$ 
and $h$ is constant along the orbits of the flow $(X_t)_t$ generated by $X$ then the flow $(Y_t)_t$ generated by $Y$
satisfies $Y_t(x) =  X_{h(x) t} (x)$ for every $t \in \mathbb{R}$ 
and $x \in M$.
The centralizer of a $\mathbb R^d$-action is defined similarly.

\begin{definition}
Given a $C^r$-action $\Phi: \mathbb{R}^d \times M \rightarrow M$, 
we define its \emph{centralizer} as the set 
$\mathcal{Z}^r(\Phi) = \{\Psi : \mathbb{R}^d \times M \rightarrow M : \; \Phi_v \circ \Psi_u = \Psi_u \circ \Phi_v $ for all $v, \, u \in \mathbb{R}^d\}$. We say that $\Phi$ has a \emph{quasi-trivial centralizer} if for any $\Psi \in \mathcal{Z}^r(\Phi)$ there exists a $C^r$-map $A: M \rightarrow \mathcal M_{d\times d}(\mathbb{R})$ satisfying $A(x) = A(\Phi_v(x))$ for every $v\in\mathbb R^d$
and $x\in M$ and so that $\Psi_v(x) = \Phi(A(x) v, \, x)$  for every $(v, \, x) \in \mathbb{R}^d \times M$. 
\end{definition}

\subsection{Statement of the main results}\label{sec:state}
This subsection is devoted to the statement of the main results.

\subsubsection{Centralizers of Komuro-expansive flows}
It is known that $C$-expansive flows on compact and connected metric spaces have quasi-trivial centralizer 
~\cite{oka} (the author used the nomenclature of ``unstable centralizer''). Our first result is an extension of the aforementioned results for the broader class of expansive flows.

\begin{theorem}\label{thm:flows}
Let $\varphi$ be a $C^\infty$ flow on a compact, connected Riemannian manifold $M$ and let 
$\Lambda \subset M$ be a compact $\varphi$-invariant set such that $\varphi$ is Komuro-expansive in $\Lambda$. 
If all the singularities of $\varphi_{|\Lambda}$ are hyperbolic and non-resonant then for any $\psi \in \mathcal{Z}^{\infty}(\varphi_{|\Lambda})$ there exists a continuous map $A: \Lambda \rightarrow \mathbb{R}$, constant along the orbits of $\varphi_{|\Lambda}$ (meaning $A(x) = A(\varphi_t(x))$ for every $x \in \Lambda$ and $t \in \mathbb{R}$) so that $\psi_t(x) = \varphi_{A(x) t}(x)$  for every $(t, \, x) \in \mathbb{R} \times \Lambda$.
\end{theorem}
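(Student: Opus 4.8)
The plan is to run the classical expansiveness argument for the regular part of $\Lambda$ and then to confront the genuinely new difficulty, namely the continuous extension of the reparameterization across the (non-isolated) singularities, which is exactly where the non-resonance hypothesis will enter. Throughout I use that a commuting flow $\psi$ preserves $\Lambda$, so that Komuro-expansiveness (Definition~\ref{expansiveflow}) may be applied to pairs of points of $\Lambda$.

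\textbf{Step 1: $\psi$ preserves every $\varphi$-orbit.} First I would use compactness of $\Lambda$ and continuity of $\psi$ to note that $\psi_s\to\mathrm{id}$ uniformly on $\Lambda$ as $s\to0$. Fix $\varepsilon>0$ and let $\delta>0$ be given by Komuro-expansiveness; choose $s_0>0$ with $d(z,\psi_s(z))<\delta$ for all $z\in\Lambda$ and $|s|<s_0$. For such $s$, writing $y=\psi_s(x)$, the commutation relation gives for every $t$
$$ d(\varphi_t(x),\varphi_t(y)) = d\big(\varphi_t(x),\psi_s(\varphi_t(x))\big)<\delta. $$
Applying Definition~\ref{expansiveflow} with the increasing homeomorphism $h=\mathrm{id}_{\mathbb R}$ produces $t_0,t_1$ with $|t_1-t_0|\le\varepsilon$ and $\varphi_{t_0}(y)=\varphi_{t_1}(x)$, i.e. $\psi_s(x)=\varphi_{t_1-t_0}(x)$ lies on the $\varphi$-orbit $O(x)$. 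Since $\{s:\psi_s(x)\in O(x)\}$ is a subgroup of $\mathbb R$ containing $(-s_0,s_0)$, it equals $\mathbb R$; hence $\psi_s(x)\in O(x)$ for all $s\in\mathbb R$ and all $x\in\Lambda$.

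\textbf{Step 2: linear reparameterization on the regular part.} For a regular point $x$ the orbit map $t\mapsto\varphi_t(x)$ is injective, so there is a unique $\tau(s,x)$ with $\psi_s(x)=\varphi_{\tau(s,x)}(x)$. The flow property of $\psi$ together with the commutation relation shows that $\tau(\cdot,x)$ is additive; being continuous with $\tau(0,x)=0$ it is linear, $\tau(s,x)=A(x)\,s$. Then $\psi_s(\varphi_t(x))=\varphi_t(\psi_s(x))$ yields orbit-invariance $A(\varphi_t(x))=A(x)$, and continuity of $A$ on regular points follows from the implicit function theorem applied to $\psi_1(x)=\varphi_{A(x)}(x)$, transversality being guaranteed by $X(x)\neq0$. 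Periodic orbits are handled the same way, with $\tau$ determined only modulo the period: selecting the continuous branch with $\tau(0,x)=0$ and invoking additivity again gives a well-defined $A(x)$. At a singularity $\sigma$ one already gets $\psi_s(\sigma)=\varphi_\tau(\sigma)=\sigma$, so $\sigma$ is fixed by $\psi$ too and the value $A(\sigma)$ remains to be pinned down by continuity.

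\textbf{Step 3 (the crux): the singularities.} Let $X,Y$ generate $\varphi,\psi$, so that $[X,Y]=0$ and $Y(\sigma)=0$. By non-resonance the eigenvalues of $DX(\sigma)$ are distinct, and the prescribed non-resonance relations are precisely the obstruction to smoothly linearizing the flow; so after a smooth change of coordinates near $\sigma$ one has $\varphi_t=e^{tL}$ with $L=\mathrm{diag}(\lambda_i)$. Because $Y$ commutes with the linearized $X$, its components obey $Y_i(e^{tL}x)=e^{\lambda_i t}Y_i(x)$, and matching these homogeneity relations monomial by monomial, non-resonance kills every term except $x_i$, so $Y=Nx$ with $N=\mathrm{diag}(\mu_i)$ is itself linear. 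For a regular $x\in\Lambda$ in this chart the identity $\psi_s(x)=\varphi_{A(x)s}(x)$ reads $e^{s\mu_i}x_i=e^{A(x)s\lambda_i}x_i$, forcing $\mu_i=A(x)\lambda_i$ for every coordinate with $x_i\neq0$. Since $A(x)$ is a single scalar, the ratios $\mu_i/\lambda_i$ must coincide across all eigendirections realized by orbits of $\Lambda$ accumulating on $\sigma$; calling the common value $c$, the function $A$ is identically $c$ on the regular points of $\Lambda$ near $\sigma$, and setting $A(\sigma)=c$ renders $A$ continuous there.

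I expect Step 3 to be the main obstacle. The relation $[X,Y]=0$ alone only yields that the linear parts of $X$ and $Y$ are simultaneously diagonal, not proportional; the essential point is to upgrade ``simultaneously diagonalizable'' to ``proportional'', and this relies on feeding back the scalar reparameterization $A$ already constructed on the regular orbits of $\Lambda$, together with a smooth simultaneous linearization that only the non-resonance hypothesis supplies. The technical heart is therefore to control how the values $\mu_i/\lambda_i$ can spread as orbits of $\Lambda$ approach $\sigma$ along distinct invariant directions, and then to glue the resulting local constants into one globally continuous, orbit-invariant function $A$ on all of $\Lambda$.
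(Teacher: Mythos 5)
Your Steps 1 and 2 follow the same strategy as the paper (apply Komuro-expansiveness with $h=\mathrm{id}$ to $y=\psi_s(x)$, then upgrade the local reparameterization to $\tau(s,x)=A(x)s$ via additivity and orbit-invariance); your subgroup observation even replaces the paper's inductive extension of the local reparameterization (Proposition~\ref{extencao}) by a cleaner argument. The genuine problems are concentrated in Step 3. First, the non-resonance hypothesis of the theorem is the \emph{separate} one: only the eigenvalues of $DX(\sigma)|_{E^s}$ among themselves, and of $DX(\sigma)|_{E^u}$ among themselves, are assumed non-resonant; relations mixing stable and unstable eigenvalues are not excluded. Hence Sternberg's theorem does not provide a $C^\infty$ linearization of $\varphi$ on a full neighborhood of a saddle, only of the restrictions $\varphi|_{W^s(\sigma)}$ and $\varphi|_{W^u(\sigma)}$ --- which is exactly how the paper proceeds (Lemma~\ref{extencaosing}, Case 2, after a Hartman--Grobman reduction). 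Second, even where a smooth linearization exists, matching monomials only shows that the \emph{formal} Taylor series of $Y$ at $\sigma$ is linear; it says nothing about the flat remainder. Killing the flat part is precisely the content of Kopell's theorem (Lemma~\ref{kopellflows}), whose proof uses that for a contraction the orbits remain in the linearizing chart for all positive time; this is unavailable in a full neighborhood of a saddle, where orbits exit in finite time and the conjugacy relation holds only for bounded $t$. (There is also the foundational point that $\psi\in\mathcal{Z}^\infty(\varphi|_\Lambda)$ is only a flow on $\Lambda$, which may have empty interior in $M$, so a generator $Y$ with $[X,Y]=0$ need not be defined on an open neighborhood of $\sigma$; the eigenvalues may moreover be complex, so $L$ need not be real-diagonal.)

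Finally, the step you defer --- showing that the constant value of $A$ on $W^s(\sigma)\setminus\{\sigma\}$ agrees with the one on $W^u(\sigma)\setminus\{\sigma\}$ --- cannot be extracted from the ratio identity $\mu_i=A(x)\lambda_i$, because the points of $\Lambda$ accumulating on $\sigma$ may all lie on, or limit onto, the invariant manifolds, where some coordinates vanish identically and the corresponding ratios are never realized at a single point. The paper closes this gap with a dynamical argument: take regular points $x_n\to x\in W^s(\sigma)$ off $W^s\cup W^u$, flow them to a cross-section transverse to $W^u(\sigma)$, and use the orbit-invariance and continuity of $A$ at regular points to force $A(x)=A(x')$ for some $x'\in W^u(\sigma)$. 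Without this (or an equivalent) argument the continuity of $A$ at $\sigma$ is not established; this, together with the two points above, is the substance of the proof you would still need to supply.
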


Some comments are in order.
Firstly, the $C^\infty$ regularity assumption in the previous theorem is not used in full strength.
Indeed, the argument in the proof of the theorem can be divided two main steps: (i) the orbit of a regular point 
by an element in the centralizer is a reparameterization of the original trajectory, and (ii) the reparameterization obtained at regular orbits extend continuously to singularities. 
Our strategy combines the linearization at hyperbolic singularities (which requires sufficiently regularity of the vector field 
given in terms of conditions on the eigenvalues of the singularities as in Sternberg's linearization results) together 
with the characterization due to Kopell~\cite{Ko70} 
that the $C^r$-centralizer of their linear part is formed only by linear transformations provided that $r$ is 
large enough (we refer the reader to Subsection~\ref{sec:extension} for the details).  The $C^\infty$ assumption 
allows us to simplify the statement. Moreover, the centralizer can be proved trivial in the case that stable/unstable manifolds
of singularities are dense in the phase space. 
Secondly, we observe that a version of Theorem~\ref{thm:flows} for volume preserving flows also holds. This follows straightforwardly 
from the arguments used in the proof of Theorem~\ref{thm:flows} using linearization results for volume preserving vector fields
(see e.g. ~\cite{llave} and references therein). 
Thirdly, Komuro-expansive flows do not form a $C^1$-open set of flows. Nevertheless, the $C^1-$interior of 
the set of Komuro-expansive flows is not empty and contains the important classes of Anosov  
and singular-hyperbolic flows. Since our results apply to proper invariant sets, in particular we deduce there exists 
an open and dense subset of $C^\infty$- geometric Lorenz attractors 
with trivial centralizer (cf. Example~\ref{ex:Lorenz}).
Finally, recalling the duality between commuting flows and vector fields, Theorem~\ref{thm:flows} admits the following reformulation: if $X \in \mathfrak{X}^{\infty}(M)$ generates a singularity-free Komuro-expansive flow on a compact and connected Riemannian manifold $M$ 
then for any $Y \in \mathcal{Z}^{\infty}(X)$ there exists $h\in C^\infty(M,\mathbb R)$ so that $Y=h \cdot X$ and $X(h)=0$.

\smallskip

Our strategy relies on the quasi-triviality of the centralizer on the topological basin of attraction of attractors.
Using spectral decomposition in finitely many basic pieces,  Sad~\cite[Theorem~B]{Sad2} proved that there is an open 
and dense subset $A_{\tau}^{'}$ of the $C^\infty$-Axiom A vector fields with the strong transversality condition so that $\mathcal{Z}^{\infty}(X) = \{c X: \, c \in \mathbb{R}\}$ for every $X \in A_{\tau}^{'}$. The following can be understood
as an extension of \cite{Sad2}, where expansiveness and the (open and dense) non-ressonance condition replaces
the uniformly hyperbolic assumption of \cite{Sad2}.

\begin{corollary}\label{cor:centr}
Let $\varphi$ be an expansive $C^\infty$-flow on a compact and connected Riemannian manifold $M$. Assume that all the singularities $\text{Sing}(\varphi)$ are hyperbolic and non-resonant. If
$$
\Lambda= \overline{\bigcup_{\sigma \in \text{Sing}(\varphi)} W^s(\sigma) }  \;\;\;\;\;\; \Big(\textrm{or } \;\; \Lambda= \overline{\bigcup_{\sigma \in \text{Sing}(\varphi)} W^u(\sigma) } \Big)
$$ 
then $\mathcal{Z}^{\infty}(\varphi\mid_\Lambda)$ is trivial. In other words, if $\psi \in \mathcal{Z}^{\infty}(\varphi\mid_\Lambda)$ then there exists $c\in \mathbb R$ so that $\psi_t(x)=\varphi_{ct}(x)$ for every $t\in \mathbb R$ and $x\in \Lambda$.
\end{corollary}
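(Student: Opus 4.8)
The plan is to derive the corollary directly from Theorem~\ref{thm:flows}, the only additional ingredient being that the relevant orbits in $\Lambda$ limit onto a singularity. First I would verify that Theorem~\ref{thm:flows} applies to the invariant set $\Lambda$. Indeed, $\Lambda$ is compact and $\varphi$-invariant, and since $\varphi$ is Komuro-expansive on all of $M$, restricting the defining condition to pairs $x,y\in\Lambda$ shows that $\varphi|_\Lambda$ is Komuro-expansive in $\Lambda$; moreover $\text{Sing}(\varphi|_\Lambda)\subseteq\text{Sing}(\varphi)$, so all its singularities are hyperbolic and non-resonant. Hence Theorem~\ref{thm:flows} provides, for each $\psi\in\mathcal Z^\infty(\varphi|_\Lambda)$, a continuous orbit-invariant map $A\colon\Lambda\to\mathbb R$ with $\psi_t(x)=\varphi_{A(x)t}(x)$ for all $(t,x)$. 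It then suffices to prove that $A$ is constant.

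Next I would evaluate $A$ along the stable manifolds. Fix $\sigma\in\text{Sing}(\varphi)$ and $x\in W^s(\sigma)$, so that $\varphi_t(x)\to\sigma$ as $t\to+\infty$ (in the dual statement with $W^u$, let $t\to-\infty$). Orbit-invariance gives $A(x)=A(\varphi_t(x))$ for every $t$, and letting $t\to+\infty$ together with continuity of $A$ at $\sigma\in\Lambda$ yields $A(x)=A(\sigma)$. Thus $A\equiv A(\sigma)$ on $W^s(\sigma)$, and by continuity on its closure $\overline{W^s(\sigma)}$. Since hyperbolic singularities are isolated and $M$ is compact, $\text{Sing}(\varphi)$ is finite; therefore $A$ takes values in the finite set $\{A(\sigma):\sigma\in\text{Sing}(\varphi)\}$ on the dense subset $\bigcup_\sigma W^s(\sigma)$, and by continuity $A(\Lambda)$ lies in this finite set as well.

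Finally, $A\colon\Lambda\to\mathbb R$ is continuous with finite image, hence locally constant, so its level sets $A^{-1}(a)$ partition $\Lambda$ into relatively clopen invariant pieces; connectedness of $\Lambda$ then forces $A(\Lambda)$ to be a single point $c$, giving $\psi_t(x)=\varphi_{ct}(x)$ on $\Lambda$. I expect the main obstacle to be precisely this last point, namely that the constants $A(\sigma)$ associated with distinct singularities must coincide: on a disconnected $\Lambda$ one could rescale the $\varphi$-time by different factors on different invariant components and still commute with $\varphi$, so some connectivity input is essential. In the cases of interest — when $\Lambda$ is a transitive attractor or repeller, such as a geometric Lorenz attractor — this is immediate, since an orbit-invariant continuous function is constant on the closure of any dense orbit; more generally the constants must agree whenever the closures $\overline{W^s(\sigma_i)}$ meet.
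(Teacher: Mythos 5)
Your proposal follows essentially the same route as the paper's proof: apply Theorem~\ref{thm:flows} to get the continuous, orbit-invariant reparameterization $A$ on $\Lambda$, show $A$ is constant on each $\overline{W^s(\sigma)}$, note that $A$ therefore has finite image, and conclude by a connectedness argument. Two points of comparison are worth recording. First, your justification that $A$ is constant on $W^s(\sigma)$ --- namely $A(x)=A(\varphi_t(x))\to A(\sigma)$ as $t\to+\infty$ by continuity of $A$ at $\sigma$ --- is more elementary than the paper's, which at this step re-invokes the linearization/Kopell analysis from the proof of Lemma~\ref{extencaosing}; your shortcut is legitimate because Theorem~\ref{thm:flows} already delivers continuity of $A$ at the singularities, so there is no circularity. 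Second, and more importantly, you have put your finger on the one genuine weak point: the final step requires $\Lambda$, not $M$, to be connected, and connectedness of $\Lambda$ does not follow from the stated hypotheses (each $\overline{W^s(\sigma)}$ is connected, but their union need not be). The paper's own proof concludes by ``using that $M$ is connected and that $A$ is continuous,'' which is precisely the leap you describe: if $\Lambda$ decomposed into two clopen invariant pieces one could rescale time by different constants on each piece and still commute with $\varphi\mid_\Lambda$. So your reservation is warranted rather than a defect of your argument relative to the paper's; in the applications the paper actually makes (transitive singular-hyperbolic attractors such as the geometric Lorenz attractor, where $\Lambda$ is the closure of a single stable manifold or contains a dense orbit) connectedness of $\Lambda$ holds and both proofs go through.
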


\smallskip

\subsubsection{Centralizers of $\mathbb R^d$-actions}

Our previous results have implications for the study of the centralizer of smooth $\mathbb R^d$-actions that admit expansive elements. For example, the following is a consequence of Theorem \ref{thm:flows}.

\begin{corollary}\label{cor:actions}
Take $d\ge 1$ and let $\Phi: \mathbb R^d \times M \to M$ be a continuous $\mathbb R^d$-action on a compact Riemannian manifold $M$. If there exists $v\in \mathbb R^d$ so that $(\Phi_{tv})_{t\in \mathbb R}$ is an expansive flow then the orbits of $\Phi$ are one-dimensional and coincide with the orbits of a flow. 
\end{corollary}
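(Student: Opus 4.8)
The plan is to exploit the commutativity of the $\mathbb{R}^d$-action to produce, for every direction $u \in \mathbb{R}^d$, a one-parameter flow that commutes with the expansive flow $\varphi := (\Phi_{tv})_{t \in \mathbb{R}}$, and then to use expansiveness to force each such flow to preserve the orbit foliation of $\varphi$. Write $O_\varphi(x) = \{\varphi_t(x) : t \in \mathbb{R}\}$ and $O_\Phi(x) = \{\Phi_w(x) : w \in \mathbb{R}^d\}$ for the $\varphi$- and $\Phi$-orbits of $x$; since $O_\varphi(x) \subseteq O_\Phi(x)$ trivially, the whole content is the reverse inclusion $O_\Phi(x) \subseteq O_\varphi(x)$, which would identify the $\Phi$-orbit with an orbit of the single flow $\varphi$, hence a set of dimension at most one.

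First I would fix an arbitrary $u \in \mathbb{R}^d$ and set $\psi := (\psi_s)_{s \in \mathbb{R}}$ with $\psi_s := \Phi_{su}$. Because $\mathbb{R}^d$ is abelian, $\psi_s \circ \varphi_t = \Phi_{su + tv} = \varphi_t \circ \psi_s$, so $\psi$ commutes with $\varphi$. This is the only place where the group structure of the action is used, and it is what plays the role of the centralizer hypothesis in Theorem~\ref{thm:flows}.

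The core step is to show that $\psi_s(x) \in O_\varphi(x)$ for all $s \in \mathbb{R}$ and all $x \in M$. Given $\varepsilon > 0$, take $\delta > 0$ from Komuro-expansiveness of $\varphi$. Since $\Phi$ is continuous and $M$ compact, $(s, z) \mapsto \Phi_{su}(z)$ is uniformly continuous on $[-1, 1] \times M$, so there is $s_0 > 0$ with $d(\psi_s(z), z) < \delta$ for all $z \in M$ and $|s| \le s_0$. Fixing such an $s$ and using commutativity, $d(\varphi_t(x), \varphi_t(\psi_s(x))) = d(\varphi_t(x), \psi_s(\varphi_t(x))) < \delta$ for all $t \in \mathbb{R}$; applying Komuro-expansiveness with $y = \psi_s(x)$ and $h = \mathrm{id}$ yields $t_0$ with $\varphi_{t_0}(\psi_s(x)) \in \varphi_{[t_0 - \varepsilon, t_0 + \varepsilon]}(x)$, whence $\psi_s(x) \in O_\varphi(x)$. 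Thus the set $G_x := \{s : \psi_s(x) \in O_\varphi(x)\}$ contains $[-s_0, s_0]$; a short computation using commutativity shows $G_x$ is a subgroup of $\mathbb{R}$ (if $\psi_s(x) = \varphi_a(x)$ and $\psi_{s'}(x) = \varphi_b(x)$, then $\psi_{s + s'}(x) = \varphi_{a+b}(x)$), so $G_x = \mathbb{R}$.

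Finally, applying this to every $u$ and taking $s = 1$ gives $\Phi_u(x) \in O_\varphi(x)$ for all $u \in \mathbb{R}^d$, i.e. $O_\Phi(x) \subseteq O_\varphi(x)$, so the two orbits coincide and are orbits of the flow $\varphi$. The main obstacle, and the reason the full Theorem~\ref{thm:flows} machinery (linearization, the non-resonance condition) is not needed here, is that we only want to locate the commuting orbit inside $O_\varphi(x)$, not to identify the precise reparameterization $A$; the purely topological expansiveness argument above suffices and, in particular, goes through unchanged at singularities of $\varphi$, where it simply shows such points are fixed by the whole action $\Phi$. The one point requiring care is the passage from small $s$ to all $s$, which is exactly the subgroup argument and uses commutativity in an essential way.
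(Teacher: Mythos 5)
Your proof is correct, and it takes a genuinely lighter route than the paper's. The paper fixes a basis $\{v,u_2,\dots,u_d\}$ of $\mathbb R^d$ and, for each $u_i$, invokes Theorem~\ref{thm:flows} wholesale to conclude that the commuting flow $(\Phi_{tu_i})_t$ is a continuous orbit-invariant reparameterization $\Phi_{tu_i}(x)=\Phi_v(A_i(x)t,x)$ of the expansive flow; the one-dimensionality of regular orbits then falls out. Formally, however, Theorem~\ref{thm:flows} carries hypotheses ($C^\infty$ regularity, hyperbolic and non-resonant singularities) that the corollary's statement does not assume, since the action is only required to be continuous. You instead isolate the purely topological core of the argument: the local step is exactly the computation of Lemma~\ref{singepsilon1} (commutativity gives $d(\varphi_t(x),\varphi_t(\psi_s(x)))<\delta$ for all $t$ once $\|s\|$ is small, and Komuro-expansiveness with $h=\mathrm{id}$ places $\psi_s(x)$ on $O_\varphi(x)$), but you replace the paper's extension machinery (Proposition~\ref{extencao}, which builds a global continuous reparameterization by gluing) with the observation that $G_x=\{s:\psi_s(x)\in O_\varphi(x)\}$ is a subgroup of $\mathbb R$ containing a neighborhood of $0$. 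This is legitimate because the corollary only asserts set-level coincidence of orbits, not the existence of a continuous reparameterization $A$; your subgroup computation $\psi_{s+s'}(x)=\varphi_{a+b}(x)$ is valid and uses commutativity exactly where needed. What each approach buys: the paper's route yields the explicit orbit-invariant functions $A_i$ (reused elsewhere), while yours applies verbatim to merely continuous actions and places no restriction on the singularities of $\varphi$ (which your argument simply shows are fixed by the whole action), thus matching the corollary's stated hypotheses more faithfully. The only caveat, shared with the paper (whose proof concludes that all \emph{regular} orbits are one-dimensional), is that at a singularity of $\varphi$ the common orbit is a single point rather than a one-dimensional submanifold.
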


By the previous corollary, if an $\mathbb R^d$-action has some orbit of dimension larger than one then there exists no $v\in \mathbb R^d$ so that $(\Phi_{tv})_{t\in \mathbb R}$ is an expansive flow. 
In what follows we shall introduce the notion of homogeneous $\mathbb R^d$-actions.

\begin{definition}
Let $M$ be a compact Riemannian manifold. We say that the $\mathbb{R}^d$-action $\Phi: \mathbb R^d \times M \to M$ 
is \emph{homogeneous} (or \emph{locally free}) if all orbits by $\Phi$ are submanifolds on $M$ with dimension equal to $d$. 
\end{definition}

We observe that a flow is homogeneous if and only it has no singularities. In particular, not every manifold admits homogeneous $\mathbb{R}^d$-actions (e.g.  every $C^1$-flow on $\mathbb S^2$ admits singularities, by 
Poincar\'e-Bendixson theorem). On the other hand, the only manifolds that support homogeneous $\mathbb R^d$-actions 
are torus $\mathbb T^n$ ($n\ge d$) (see Lemma~2 at Subsection 49 of \cite{Arnold}), 
and the space of homogeneous $\mathbb{R}^d$-actions forms a open subset of all 
$\mathbb{R}^d$-actions.
Indeed, if $(e_1, \dots, e_d)$ denotes a basis of $\mathbb R^d$, $\Phi: \mathbb{R}^d \times M \rightarrow M$ 
is a smooth $\mathbb R^d$-action, and $\varphi_{e_i}:=(\Phi_{te_i})_{t\in\mathbb R}$ denotes the canonical flow generated by the direction 
$e_i$ 
\[
\begin{array}{rccc}
\varphi_{e_i}: & \mathbb R \times M & \to &  M \\
	& (t,x) & \mapsto & \Phi(t  e_i, \, x)
\end{array}
\]
then it is not hard to check that $\Phi$ is homogeneous if and only if the vector fields 
$X_{e_i}(x) = \frac{d}{dt} \varphi_{e_i} (t,x) \! \mid_{t=0}$ are linearly independent at all points of $M$, which is clearly
an open condition.  In what follows we describe the centralizer of expansive and homogeneous $\mathbb{R}^d$-actions. 
 
 \begin{theorem}\label{centacaoespansiva}
Let $\Phi: \mathbb{R}^d \times \mathbb T^n \rightarrow \mathbb T^n$ be a $C^1$ action, $n\ge d$. 
If $\Phi$ is expansive and homogeneous then $\mathcal{Z}^1(\Phi)$ is quasi-trivial, i.e.,
for every $\Psi \in \mathcal{Z}^1(\Phi)$ there exists a $C^1$-map $A: \mathbb T^n \rightarrow \mathcal M_{d\times d}(\mathbb{R})$ satisfying $A(x) = A(\Phi_v(x))$ for every $v\in\mathbb R^d$ and so that $\Psi_v(x) = \Phi(A(x) v, \, x)$  for every $(v, \, x) \in \mathbb{R}^d \times \mathbb T^n$. 
\end{theorem}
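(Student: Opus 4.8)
The plan is to imitate the strategy behind Theorem~\ref{thm:flows}: first exploit expansiveness to trap each $\Psi_u$ inside the orbit foliation $\cF$ of $\Phi$, and then use homogeneity (local freeness) to turn this confinement into a genuinely linear reparameterization. Write $X_{e_1},\dots,X_{e_d}$ for the generating vector fields of $\Phi$, $X_{e_i}(x)=\frac{d}{dt}\Phi(te_i,x)|_{t=0}$, and $Y_{e_1},\dots,Y_{e_d}$ for those of a fixed $\Psi\in\mathcal{Z}^1(\Phi)$; by homogeneity the $X_{e_i}(x)$ are linearly independent and span $T_x\cF(x)$ at every point. \textbf{Step 1 (orbit confinement).} First I would show $\Psi_u(x)\in\cF(x)$ for all $u$ and $x$. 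Given $\varepsilon>0$, let $\delta>0$ be furnished by expansiveness; by continuity of $\Psi$ and compactness of $\mathbb T^n$ there is $\rho>0$ with $\sup_z d(z,\Psi_u(z))<\delta$ whenever $\|u\|<\rho$. For such $u$ set $y=\Psi_u(x)$ and take $h=\mathrm{id}$: commutativity gives $\Phi_v(y)=\Psi_u(\Phi_v(x))$, so $d(\Phi_v(x),\Phi_{h(v)}(y))=d(\Phi_v(x),\Psi_u(\Phi_v(x)))<\delta$ for every $v\in\mathbb R^d$, and expansiveness yields $\Psi_u(x)=\Phi_{v_0}(x)$ with $\|v_0\|<\varepsilon$. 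Writing $\Psi_u$ as the $N$-fold composition of $\Psi_{u/N}$ with $\|u/N\|<\rho$ (legitimate since $\mathbb R^d$ is abelian) and using that each $\Psi_{u/N}$ maps leaves to leaves extends the conclusion to all $u$.

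\textbf{Step 2 (linearization along orbits).} Fix $x$. Local freeness makes the orbit map $\pi_x:\mathbb R^d\to\cF(x)$, $\pi_x(v)=\Phi(v,x)$, a covering onto the leaf, with discrete stabilizer lattice $\Gamma_x$. Since $\mathbb R^d$ is simply connected and $u\mapsto\Psi_u(x)$ is a continuous map into $\cF(x)$ with $\Psi_0(x)=\pi_x(0)$, it lifts uniquely to a continuous $\tau_x:\mathbb R^d\to\mathbb R^d$ with $\tau_x(0)=0$ and $\Phi(\tau_x(u),x)=\Psi_u(x)$. The identity $\Psi_u\circ\Phi_w=\Phi_w\circ\Psi_u$ forces $\tau_{\Phi_w(x)}(u)-\tau_x(u)\in\Gamma_x$; being continuous, vanishing at $u=0$, and valued in a discrete set, this difference is identically zero, so $\tau$ is constant along orbits. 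The action law $\Psi_{u_1+u_2}=\Psi_{u_1}\circ\Psi_{u_2}$ then gives, via the covering, $\tau_x(u_1+u_2)-\tau_x(u_1)-\tau_x(u_2)\in\Gamma_x$, again identically zero. Hence $\tau_x$ is a continuous additive homomorphism of $\mathbb R^d$, therefore $\mathbb R$-linear: $\tau_x(u)=A(x)u$ for a unique $A(x)\in\mathcal M_{d\times d}(\mathbb R)$. This yields $\Psi_v(x)=\Phi(A(x)v,x)$ for all $v$, and orbit-invariance of $\tau$ gives $A(\Phi_w(x))=A(x)$.

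\textbf{Step 3 (regularity) and the main obstacle.} Differentiating $\Phi(A(x)u,x)=\Psi_u(x)$ at $u=0$ identifies $A(x)$ with the unique solution of the frame relation
\[
Y_{e_j}(x)=\sum_{i=1}^{d} A_{ij}(x)\,X_{e_i}(x),\qquad j=1,\dots,d.
\]
Collecting the generators into $n\times d$ matrices $\mathbf X(x)$ and $\mathbf Y(x)$, the full rank of $\mathbf X(x)$ (homogeneity) gives $A(x)=(\mathbf X(x)^{\top}\mathbf X(x))^{-1}\mathbf X(x)^{\top}\mathbf Y(x)$, so $A$ is a smooth function of the generating frames and inherits their regularity. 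Here lies the step I expect to require the most care: for a merely $C^1$ action the generators $X_{e_i},Y_{e_j}$ are a priori only continuous, so extracting the claimed $C^1$ regularity of $A$ will require upgrading smoothness using the commuting structure $[X_{e_i},Y_{e_j}]=0$ and local freeness, and combining it with the fact that $A$ is constant along leaves (hence effectively a function transverse to $\cF$). A secondary technical point, already latent in Step~2, is the bookkeeping of the covering $\pi_x$ and the lattice $\Gamma_x$ when the leaves are dense and non-embedded, which must be handled to guarantee that the locally defined lift $\tau_x$ is globally consistent.
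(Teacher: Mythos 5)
Your Steps 1 and 2 are correct and arrive at the same conclusion as the paper's Lemmas \ref{e0acao}, \ref{singepsilon}, \ref{lemaextensaoacao} and \ref{lemainvorbitas}, but by a genuinely different and in some ways cleaner route. The paper works ``from below'': it first proves a uniform lower bound $\varepsilon_0(\Phi)>0$ on periods, uses expansiveness to produce a unique small-valued local reparameterization $z(u,x)$ for $\|u\|\le\mu$, extends it radially along rays $tv$, $v\in\mathbb S^{d-1}$, by an inductive cocycle formula, and only then deduces additivity and linearity from uniqueness of the extension. You instead obtain the confinement $\Psi_u(x)\in\cF(x)$ for all $u$ at once (small $u$ by expansiveness, general $u$ by writing $\Psi_u=\Psi_{u/N}^{\,N}$ and noting each factor preserves leaves), and then replace the whole extension machinery by a single covering-space lift $\tau_x$ through $\pi_x:\mathbb R^d\to\cF(x)$, with orbit-invariance and additivity falling out of ``continuous, $\Gamma_x$-valued, zero at the origin, hence zero''. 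This buys a shorter, more structural argument; what it costs is exactly the point you flag yourself: before the lift exists you must know that $u\mapsto\Psi_u(x)$ is continuous into the leaf with its \emph{intrinsic} topology, not merely into $\mathbb T^n$. This is true (a connected subset of a foliation chart meets only countably many plaques of a fixed leaf, so a continuous map from a locally connected space into $M$ with image in a leaf is continuous into the leaf), but it must be proved; the paper sidesteps the issue entirely because its reparameterization is assembled from uniformly small increments controlled by $\varepsilon_0(\Phi)$.

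The genuine gap is Step 3. The theorem asserts that $A$ is $C^1$, and your identification $A(x)=(\mathbf X(x)^{\top}\mathbf X(x))^{-1}\mathbf X(x)^{\top}\mathbf Y(x)$ can only deliver continuity, since for a $C^1$ action the generators $X_{e_i}=\frac{d}{dt}\Phi(te_i,\cdot)\mid_{t=0}$ are merely continuous; your suggestion to upgrade regularity using $[X_{e_i},Y_{e_j}]=0$ is not carried out and is not obviously available at this regularity (the brackets are not even defined for $C^0$ fields). The paper closes this step differently, via the implicit function theorem: fixing a basis $v_1,\dots,v_d$ of $\mathbb R^d$, the map $F(C,z)=\bigl(\Phi(Cv_1,z)-\Psi_{v_1}(z),\dots,\Phi(Cv_d,z)-\Psi_{v_d}(z)\bigr)$ is $C^1$ jointly in $(C,z)$ because $\Phi$ and $\Psi$ are $C^1$ as maps on $\mathbb R^d\times\mathbb T^n$, its derivative in $C$ at the solution is injective by homogeneity, and the implicit function theorem (in its constant-rank form) yields $A\in C^1$ without ever differentiating the generating frames. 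You should replace your Step 3 by this argument.
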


The previous result is the counterpart of \cite{oka} for $\mathbb R^d$-actions.
We also obtain a geometrical interpretation for the reparameterization $A(\cdot)$ obtained above 
(see Proposition~\ref{Acomblinear}) and prove that Anosov actions also have quasi-trivial centralizer 
(see Example~\ref{corAn}).
The description of the centralizer of non-homogeneous, expansive $\mathbb R^d$-actions encodes difficulties 
similar to the ones for flows where singular and non-singular orbits coexist. The strategy used in the case of flows 
with singularities can probably be applied similarly in the case that the set of singular orbits (ie. of dimension smaller 
than $d$) has empty interior in $M$. More generally, it is not hard to see that elements in the centralizer of any $\mathbb R^d$-action preserve orbits of the same dimension but it is unclear if these are reparameterizations of the original action. 
We also establish a characterization of expansive $\mathbb R^d$-actions obtained as suspensions of $\mathbb Z^d$-actions
(Theorem~\ref{thm:suspensions}). Since the precise statement of this result requires many extra definitions we will state and
prove it in Section~\ref{sec:actions1}).

\section{Centralizer of expansive flows}\label{sec:flows}

This section is devoted to the proof of Theorem~\ref{thm:flows}. We subdivide the proof in subsections for making the exposition clearer.
Let $\varphi$ be a $C^\infty$ flow defined in a compact manifold $M$ and $\Lambda \subset M$ 
be a compact $\varphi$-invariant subset on which the flow is expansive and all singularities are hyperbolic and non-resonant. First we prove that all periodic orbits on $\Lambda$, if they exist, have their periods larger than some uniform lower bound (Lemma \ref{e0}). 
This is enough to  obtain tubular flowboxes of uniform size $\mu$, at each regular point, 
and to use expansiveness to guarantee that every flow $\psi \in \mathcal{Z}^{\infty}(\varphi_{|\Lambda})$ is locally
a reparameterization of the flow $\varphi$ on $\Lambda \setminus \text{Sing}(\varphi_{|\Lambda})$ 
and that such a local reparameterization is unique and  defined for all time in $[-\mu, \, \mu]$ (cf. Lemma~\ref{singepsilon1}). 
Then we borrow the strategy of \cite{oka} to prove that although $\Lambda \setminus Sing(\varphi_{|\Lambda})$ may be non-compact the local reparameterizations of the orbits can be uniquely extended  to the real line $\mathbb{R}$ on all regular points $x\in \Lambda \setminus Sing(\varphi_{|\Lambda})$ (cf. Proposition~\ref{extencao}). 
Finally, we show that the reparameterizations are linear and constant along orbits of regular points of $\varphi_{|\Lambda}$. 
Using the linearization of the singularities together with a version of Kopell's description of the centralizer of linear flows  (Lemma~\ref{kopellflows}) we conclude that the reparameterizations can be continuously extended to the singularities  (hence, are globally defined in $\Lambda$) and that these reparameterizations are smooth off of $Sing(\varphi\mid_\Lambda)$.
Throughout this section, and for notational simplicity, we use the notation $\varphi_t$ instead of $(\varphi\mid_\Lambda)_t$.

\subsection{Bound on length of periodic orbits}

In next lemma we prove the existence of positive infimum for the period of periodic orbits of regular points of $\varphi_{|\Lambda}$, in case they exist. We will need the tubular neighborhood theorem for vector fields. 

\begin{lemma}
\label{tubular}
Let $M$ be a compact Riemannian manifold of dimension $n$. 
Given $X\in \mathfrak{X}^1(M)$ and a regular point $x\in M$ there exists $\delta=\delta_x>0$, an open neighborhood 
$U^\delta_x$ of $x$ (called tubular neighborhood), and a $C^1$-diffeomorphism $\Psi_x: U^\delta_x \to (-\delta,\delta)\times B(x,\delta) \subset \mathbb R \times \mathbb R^{d-1}$ such that the vector field $X$ on $U^\delta_x$ is the pull-back of the vector field $Y:= (1,0, \dots, 0)$ on $(-\delta,\delta)\times B(x,\delta)$, that is, 
$
Y=(\Psi_x)_* X:=D(\Psi_x)_{\Psi_x^{-1}}X(\Psi_x^{-1}).
$ 
In particular $Y_t(\cdot)=\Psi_x(X_t(\Psi_x^{-1}(\cdot)))$ for every $|t|< \delta$. 
\end{lemma}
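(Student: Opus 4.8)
The plan is to prove this classical flow-box (straightening) theorem by building the conjugacy directly from the flow of $X$ and reducing the whole statement to the inverse function theorem. Since $x$ is a regular point we have $X(x)\neq 0$, so I would first fix a codimension-one transversal through $x$: choose a $C^1$-embedded disk $\Sigma\subset M$ with $x\in\Sigma$, $\dim\Sigma=n-1$, and $X(x)\notin T_x\Sigma$ (for instance the image under the exponential map of a small ball in the orthogonal complement $X(x)^{\perp}\subset T_xM$). Parametrizing $\Sigma$ by a $C^1$-chart $\iota:B(0,\delta_0)\subset\mathbb{R}^{n-1}\to\Sigma$ with $\iota(0)=x$, I identify the transversal coordinate with $y\in B(x,\delta_0)$, so that the target will have the product form stated in the lemma (with $n-1$ in place of the misprinted $d-1$).

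Next I would introduce the flow-box map
\[
\Phi:(-\delta_0,\delta_0)\times B(x,\delta_0)\to M,\qquad \Phi(t,y)=X_t(y),
\]
where $X_t$ denotes the flow of $X$. Because $X\in\mathfrak{X}^1(M)$, the standard theorem on $C^1$-dependence on initial conditions gives that $(t,y)\mapsto X_t(y)$ is jointly $C^1$, hence $\Phi$ is $C^1$. The key computation is the derivative at the base point $(0,x)$: since $\Phi(t,x)=X_t(x)$ we get $\partial_t\Phi(0,x)=X(x)$, and since $\Phi(0,y)=y$ the partial derivative in the transversal directions is the inclusion $T_x\Sigma\hookrightarrow T_xM$. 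Therefore $D\Phi_{(0,x)}$ maps $\mathbb{R}\times T_x\Sigma$ onto $\mathbb{R}\,X(x)\oplus T_x\Sigma=T_xM$, the direct sum being guaranteed by transversality, so $D\Phi_{(0,x)}$ is a linear isomorphism. The inverse function theorem then yields $\delta\in(0,\delta_0]$ and an open neighborhood $U^\delta_x$ of $x$ on which $\Phi$ restricts to a $C^1$-diffeomorphism, and I set $\Psi_x:=\Phi^{-1}$.

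It remains to verify that $\Psi_x$ pulls back $Y=(1,0,\dots,0)$ to $X$. This is immediate from the flow property: for $(t,y)$ in the domain,
\[
D\Phi_{(t,y)}\cdot Y=\frac{\partial}{\partial t}X_t(y)=X\big(X_t(y)\big)=X\big(\Phi(t,y)\big),
\]
so $\Phi_*Y=X$, equivalently $(\Psi_x)_*X=Y$, which is exactly the asserted pull-back identity. The ``in particular'' clause follows at once, because conjugating a vector field conjugates its flow: $\Psi_x\circ X_t\circ\Psi_x^{-1}$ is the time-$t$ map of $(\Psi_x)_*X=Y$, namely $Y_t$, for every $|t|$ small enough that both sides remain inside the flow box.

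I do not expect a genuine obstacle, since this is a standard result; the only points requiring care are the joint $C^1$-regularity of $(t,y)\mapsto X_t(y)$ (so that the inverse function theorem applies in the $C^1$ category rather than merely continuously) and the purely cosmetic step of shrinking the neighborhood produced by the inverse function theorem so that its domain is genuinely a single-parameter product $(-\delta,\delta)\times B(x,\delta)$.
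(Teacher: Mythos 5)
Your proof is correct and is the standard flow-box (straightening) argument: transversal section, flow map $\Phi(t,y)=X_t(y)$, inverse function theorem at $(0,x)$, and the identity $D\Phi_{(t,y)}\cdot Y=X(\Phi(t,y))$. The paper states this lemma without proof, as a classical fact, so there is nothing to compare beyond noting that your argument (including the observation that $\mathbb R^{d-1}$ in the statement should read $\mathbb R^{n-1}$) is exactly the expected one.
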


\begin{lemma}\label{e0} Let $\varphi$ be a $\mathcal{C}^{1}$ flow defined in a compact manifold $M$ and 
$\Lambda \subset M$ be a compact $\varphi$-invariant subset such that all singularities of $\varphi$ in $\Lambda$ are hyperbolic. Then, either $\varphi\mid_\Lambda$ has no regular periodic orbits or
$$\varepsilon_0(\varphi_{|\Lambda}) :=  
\inf\{T > 0:  T \, \text{is period of a regular periodic orbit  of}\; \varphi\mid_\Lambda\} 
> 0.$$
\end{lemma}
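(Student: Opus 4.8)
The plan is to argue by contradiction. Suppose $\varphi|_\Lambda$ possesses at least one regular periodic orbit and yet $\varepsilon_0(\varphi|_\Lambda)=0$. Then there exist regular periodic points $p_n\in\Lambda$ with periods $T_n>0$ satisfying $T_n\to 0$. Since $M$ is compact, $C:=\sup_{x\in M}\|X(x)\|<\infty$, so the diameter of the orbit segment $\{\varphi_t(p_n):t\in[0,T_n]\}$ is at most $C\,T_n\to 0$; in particular each of these periodic orbits shrinks to a point. By compactness of $\Lambda$ we may pass to a subsequence and assume $p_n\to p\in\Lambda$, and then the whole periodic orbit through $p_n$ converges, in the Hausdorff metric, to $\{p\}$. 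It suffices to reach a contradiction according to whether the limit point $p$ is regular or a singularity.

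First I would treat the regular case $X(p)\neq 0$. Applying the tubular neighborhood theorem (Lemma~\ref{tubular}) at $p$ yields $\delta>0$ and a $C^1$-diffeomorphism $\Psi_p:U_p^\delta\to(-\delta,\delta)\times B(p,\delta)$ conjugating $\varphi$ to the translation flow $(s,z)\mapsto(s+t,z)$ for $|t|<\delta$. For $n$ large, $p_n$ lies near the center of the box and $T_n<\delta$, so the segment $\{\varphi_t(p_n):t\in[0,T_n]\}$ remains inside $U_p^\delta$; reading $\varphi_{T_n}(p_n)=p_n$ in these coordinates forces the first coordinate of $\Psi_p(p_n)$ to satisfy an identity that yields $T_n=0$, a contradiction. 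Hence every accumulation point $p$ must be a singularity, and since $p\in\Lambda$ it is hyperbolic by hypothesis.

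For the singular case I would use that a hyperbolic singularity admits a neighborhood containing no nontrivial periodic orbit. Indeed, as $DX(p)$ has no eigenvalue on the imaginary axis, every eigenvalue of the time-$T$ linear map $e^{T\,DX(p)}$ has modulus $\neq 1$ for $T\neq 0$, so $e^{T\,DX(p)}-\mathrm{Id}$ is invertible and the linear flow $t\mapsto e^{t\,DX(p)}$ has no periodic orbit other than the origin. By the Hartman--Grobman theorem, $\varphi$ is locally topologically conjugate near $p$ to this linear flow, and a topological conjugacy carries periodic orbits to periodic orbits; therefore some neighborhood $V$ of $p$ contains no regular periodic orbit of $\varphi$. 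But for $n$ large the entire orbit through $p_n$ lies in $V$ (since it converges to $\{p\}$), contradicting that it is a regular periodic orbit. This contradiction shows $\varepsilon_0(\varphi|_\Lambda)>0$ whenever regular periodic orbits exist.

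I expect the singular case to be the main obstacle, specifically the passage from the linear statement to the nonlinear flow near $p$. This can be handled either through the Hartman--Grobman conjugacy as above or, avoiding it entirely, by building an adapted quadratic Lyapunov function $Q(x)=\|x_u\|^2-\|x_s\|^2$ for the hyperbolic linear part and verifying, via continuity of $DX$, that $Q$ remains strictly increasing along nonconstant orbits of $\varphi$ in a small neighborhood of $p$; strict monotonicity of $Q$ precludes recurrence and hence any periodic orbit near $p$.
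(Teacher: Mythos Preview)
Your proof is correct. Both your argument and the paper's rely on the same two ingredients---the flowbox (tubular neighborhood) theorem at regular points and the Hartman--Grobman theorem at hyperbolic singularities---but the organization differs. The paper argues directly: it fixes Hartman--Grobman neighborhoods $V_i$ of the finitely many hyperbolic singularities, observes that sinks and sources contain no periodic orbits while any periodic orbit merely \emph{intersecting} a saddle neighborhood has period bounded below by a transit-time constant depending on the eigenvalues, and then covers the compact singularity-free set $S=\Lambda\cap(M\setminus\bigcup_i V_i)$ by finitely many flowboxes to obtain a uniform lower bound there. Your contradiction argument is arguably tidier: by first noting that the periodic orbits with $T_n\to 0$ must shrink to a single point, you reduce everything to showing that no point of $\Lambda$ has a neighborhood containing an entire nontrivial periodic orbit of arbitrarily small period, and this lets you handle the saddle case by the simple observation that the linearized flow has no closed orbits at all---avoiding the transit-time estimate needed for orbits that only pass through the saddle neighborhood. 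The paper's route, in exchange, yields a somewhat more explicit lower bound on $\varepsilon_0$.
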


\begin{proof} 
Assume that $\varphi\mid_\Lambda$ has regular periodic orbits.
Since all singularities are hyperbolic and $\Lambda$ is compact then these must be finite in number, say $\sigma_1, \cdots, \sigma_n$.
For all $1 \leq i \leq n$ let $V_i$ be a small neighborhood of $\sigma_i$ given by Hartman-Grobman theorem (see \cite{PalisMelo}, p. 68). Every small $V_i$ which is a neighborhood of a sink or source contains no regular periodic orbits. On other hand, if a periodic orbit intersects a neighborhood $V_i$ associated with a hyperbolic singularity of saddle type 
then its period is bounded below by a uniform constant (which is inversely proportional to the largest eigenvalue
of the unstable bundle among the hyperbolic saddles). 
It remains to prove that all periodic orbits in 
$S = \Lambda \cap (M \setminus \bigcup_{i = 0}^n V_i)$
have a period bounded away from zero.  By construction $S$ is a compact set without singularities.
For every $x\in S$ let $\delta_x>0$ and $B_x=U^{\delta_x}_x$ be a tubular neighborhood associated to $x$. 
By compactness of $S$ the open covering $(B_x)_{x\in S}$ admits a finite cover $(B_{x_j})_{j=1}^{\kappa}$.
It is now clear that any periodic orbit in $S$ has period larger than $\min_{1\le j \le \kappa} \delta_{x_j}$.
Thus $\varepsilon_0(\varphi_{|\Lambda}) > 0$, 
which finishes the proof of the lemma.
\end{proof}

\subsection{Expansiveness and local triviality at regular points}

In the next lemma, we prove the existence, uniqueness and continuity of a local reparameterization for an element in the centralizer of an expansive flow $\varphi$. 
In the case that $\varphi$ has no regular period orbits on $\Lambda$, set for simplicity $\vep_0(\varphi\mid_\Lambda)=+\infty$.
For brevity we will write $\vep_0(\varphi)$ in place of $\vep_0(\varphi\mid_\Lambda)$.

\begin{lemma}\label{singepsilon1} Let $\varphi$ a $\mathcal{C}^{1}$ flow on a compact manifold $M$
and $\Lambda \subset M$ be a compact $\varphi$-invariant subset such that the restriction $\varphi_{|\Lambda}$ is expansive and all the singularities of $\varphi$ in $\Lambda$ are hyperbolic.
If $\psi=(\psi_s)_{s \in \mathbb{R}}$ belongs to $\mathcal{Z}^{1}(\varphi_{|\Lambda})$ 
then for any $0 < \varepsilon < \varepsilon_0(\varphi_{|\Lambda})/3$ there exists $\mu > 0$ and a unique function $z :\, [-\mu, \, \mu] \times \left(\Lambda \setminus Sing(\varphi_{|\Lambda})\right) \rightarrow (-\varepsilon, \, \varepsilon)$ such that $\psi_s(x) = \varphi_{z(s, \, x)}(x)$ for any $(s, \, x) \in [-\mu, \, \mu] \times \left(\Lambda \setminus Sing(\varphi_{|\Lambda})\right)$. Moreover, 
\begin{itemize}
\item[(i)\; ] $z$ is continuous,
\item[(ii) ] if $t$, $s$, $t + s \in [-\mu, \, \mu]$ then $z(t + s, \, x) = z(t, \, x) + z(s, \, \psi(t, \, x))$.
\end{itemize}
\end{lemma}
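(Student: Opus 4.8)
The plan is to exploit the commutation relation $\varphi_t\circ\psi_s=\psi_s\circ\varphi_t$ together with Komuro-expansiveness to show that, for $|s|$ small, the point $\psi_s(x)$ lies on the $\varphi$-orbit of each regular $x$, and then to read off $z(s,x)$ as the corresponding small time-shift. First I would fix $0<\varepsilon<\varepsilon_0(\varphi_{|\Lambda})/3$, let $\delta>0$ be the expansiveness constant associated with $\varepsilon/2$ in Definition~\ref{expansiveflow}, and choose $\mu>0$ so small that $d(\psi_s(z),z)<\delta$ for every $z\in\Lambda$ and every $|s|\le\mu$; this is possible since $(\psi_s)_s$ is a continuous flow on the compact set $\Lambda$ with $\psi_0=\mathrm{id}$, so $\psi_s\to\mathrm{id}$ uniformly as $s\to0$. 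Fixing a regular $x\in\Lambda\setminus\mathrm{Sing}(\varphi_{|\Lambda})$ and $|s|\le\mu$, and setting $y=\psi_s(x)$, commutativity gives $\varphi_t(y)=\psi_s(\varphi_t(x))$, whence $d(\varphi_t(x),\varphi_t(y))<\delta$ for all $t\in\mathbb R$. Applying Komuro-expansiveness with the increasing homeomorphism $h=\mathrm{id}$ yields some $t_0\in\mathbb R$ with $\varphi_{t_0}(y)=\varphi_{t_0+\tau}(x)$ for some $|\tau|\le\varepsilon/2$.

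The key maneuver is then to transport this coincidence back to the base point $x$: using commutativity once more, $\varphi_{t_0}(\psi_s(x))=\psi_s(\varphi_{t_0}(x))$, so applying $\varphi_{-t_0}$ to both sides of $\varphi_{t_0}(\psi_s(x))=\varphi_{t_0+\tau}(x)$ gives $\psi_s(x)=\varphi_\tau(x)$ with $|\tau|<\varepsilon$. I would define $z(s,x):=\tau$. For uniqueness, if $\varphi_\tau(x)=\varphi_{\tau'}(x)$ with $\tau,\tau'\in(-\varepsilon,\varepsilon)$ and $\tau\neq\tau'$, then $\varphi_{\tau-\tau'}(x)=x$ with $0<|\tau-\tau'|<2\varepsilon<\varepsilon_0(\varphi_{|\Lambda})$; since $x$ is regular, Lemma~\ref{e0} forbids a regular periodic orbit of period below $\varepsilon_0$, forcing $\tau=\tau'$. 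This pins down $z$ as a well-defined function on $[-\mu,\mu]\times(\Lambda\setminus\mathrm{Sing}(\varphi_{|\Lambda}))$ with values in $[-\varepsilon/2,\varepsilon/2]\subset(-\varepsilon,\varepsilon)$.

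For the continuity claim (i) I would show the graph of $z$ is closed: if $(s_n,x_n)\to(s,x)$ and $\tau_n:=z(s_n,x_n)\to\tau$, then joint continuity of the flows gives $\psi_s(x)=\varphi_\tau(x)$ with $\tau\in[-\varepsilon/2,\varepsilon/2]$, so $\tau=z(s,x)$ by uniqueness; since the time-shifts take values in the compact interval $[-\varepsilon/2,\varepsilon/2]$, a map with closed graph into a compact space is continuous. (Alternatively one may read off $z$ locally from the first coordinate of a tubular flowbox as in Lemma~\ref{tubular}.) For the cocycle property (ii), I would use that $\psi$ is a flow, so for $t,s,t+s\in[-\mu,\mu]$ one has $\psi_{t+s}(x)=\psi_s(\psi_t(x))$; since $\psi_t(x)=\varphi_{z(t,x)}(x)$ is again a regular point, expanding both sides yields $\varphi_{z(t+s,x)}(x)=\varphi_{z(t,x)+z(s,\psi_t(x))}(x)$, where the two exponents differ by less than $3\varepsilon<\varepsilon_0(\varphi_{|\Lambda})$, so Lemma~\ref{e0} forces them to be equal and gives $z(t+s,x)=z(t,x)+z(s,\psi(t,x))$; this is exactly where the normalization $\varepsilon<\varepsilon_0/3$ is used. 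The main obstacle, and the only genuinely subtle point, is the existence step: Komuro-expansiveness only guarantees orbit coincidence at an a priori unknown time $t_0$, and it is the commutation relation that lets me relocate this coincidence to the prescribed base point $x$ while keeping the reparameterization small.
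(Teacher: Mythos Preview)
Your proposal is correct and follows essentially the same approach as the paper: choose $\mu$ so that $\psi_s$ is uniformly $\delta$-close to the identity, use commutation to get $d(\varphi_t(x),\varphi_t(\psi_s(x)))<\delta$ for all $t$, apply Komuro-expansiveness with $h=\mathrm{id}$, cancel $\varphi_{t_0}$, and invoke Lemma~\ref{e0} for uniqueness and the cocycle identity. The only cosmetic differences are that you use the expansiveness constant for $\varepsilon/2$ (so values land in a compact subinterval) and prove continuity via a closed-graph argument rather than the paper's direct contradiction argument; note also that the ``relocation'' step does not actually need commutation again, just the invertibility of $\varphi_{t_0}$.
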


\begin{proof} 
By Lemma \ref{e0} we have that $\varepsilon_0(\varphi_{|\Lambda}) > 0$. Given $0 < \varepsilon 
< \varepsilon_0(\varphi)/3$, let $\delta > 0$ be given by the expansiveness property (recall Definition~\ref{expansiveflow}).
Since $\Lambda$ is compact and $\varphi$-invariant, there exists $\mu > 0$ such that 
$\sup\limits_{|s| \leq \mu}\left\{d(Id, \, \psi_s)\right\} < \delta$ and, consequently, 
\begin{align*}
d(\varphi_t(x), \, \varphi_t(\psi_s(x)))  = d(\varphi_t(\psi_0(x)), \, \varphi_t(\psi_s(x))) 
                                      = d(\psi_0(\varphi_t(x)), \, \psi_s(\varphi_t(x))) 
                                       < \delta
\end{align*}
for every $x \in \Lambda$, $|s| \leq \mu$ and $t \in \mathbb{R}$. This implies (taking $h(t) = t$ in Definition \ref{expansiveflow}) that there exists $t_0 \in \mathbb{R}$ such that $\varphi_{t_0}(\psi_s(x)) = \varphi_{t_0 + \eta}(x)$ for some $\eta \in (-\varepsilon, \, \varepsilon)$. Consequently $\psi_s(x) = \varphi_{\eta}(x)$
belongs to the orbit of $x$ relative to the flow $(\varphi_t)_t$.

This defines uniquely a map $z :\, [-\mu, \, \mu] \times \left(\Lambda \setminus Sing(\varphi\mid_\Lambda)\right) \rightarrow (-\varepsilon, \, \varepsilon)$ such that $\psi_s(x) = \varphi_{z(s, \, x)}(x)$ for any $(s, \, x) \in [-\mu, \, \mu] \times \left(\Lambda \setminus Sing(\varphi\mid_\Lambda)\right)$.
In fact, if $z_1, \, z_2 :\, [-\mu, \, \mu] \times \left(\Lambda \setminus Sing(\varphi\mid_\Lambda)\right) \rightarrow (-\varepsilon, \, \varepsilon)$ are such that $\varphi_{z_1(s, \, x)}(x) = \psi_s(x) = \varphi_{z_2(s, \, x)}(x)$ then $\varphi_{z_1(s, \, x) - z_2(s, \, x)}(x) = x$ with $|z_1(s, \, x) - z_2(s, \, x)| \leq |z_1(s, \, x)| + |z_2(s, \, x)| < 2/3 \, \varepsilon_0(\varphi)$. So, by definition of $\varepsilon_0(\varphi)$ we conclude that $z_1(s, \, x) = z_2(s, \, x)$.

To prove (i) assume by contradiction that $z$ is not continuous. Then there are $\delta_0>0$, $(s, \, x) \in [-\mu, \, \mu] \times \left(\Lambda \setminus Sing(\varphi\mid_\Lambda)\right)$ and a sequence $(s_n, \, x_n)_{n \in \mathbb{N}}$ 
in $[-\mu, \, \mu] \times \left(\Lambda \setminus Sing(\varphi\mid_\Lambda)\right)$ that converges to $(s,x)$ and such that 
$|z(s_{n}, \, x_{n}) - z(s, \, x)| \geq \delta_0$ for all $n \in \mathbb{N}$.
This implies there exists $\delta_1 > 0$ such that 
$d(\varphi_{z(s_n, \, x_n)}(x), \; \varphi_{z(s, \, x)}(x)) \ge \delta_1$ 
for all $n \in \mathbb{N}$. On the other hand
\begin{align*}
\delta_1 & \leq d(\varphi_{z(s_n, \, x_n)}(x), \; \varphi_{z(s, \, x)}(x)) \\
         & \leq d(\varphi_{z(s_n, \, x_n)}(x), \; \varphi_{z(s_n, \, x_n)}(x_n)) +  d(\varphi_{z(s_n, \, x_n)}(x_n), \; \varphi_{z(s, \, x)}(x))\\
         & =    d(\varphi_{z(s_n, \, x_n)}(x), \; \varphi_{z(s_n, \, x_n)}(x_n)) +   d(\psi_{s_n}(x_n), \; \psi_s(x)).
\end{align*}
Using further that $(z(s_n, \, x_n))_{k \in \mathbb{N}}$ is bounded then, up to consider a subsequence, we may assume without less of generality that $z(s_n, \, x_n) \rightarrow t_0$ as $n \rightarrow \infty$. In consequence, the right hand side above tends to zero by continuity of the flow $\varphi$ and of the time $t_0$-map $\varphi_{t_0}$, contradicting 
the existence of $\delta_1 > 0$. This proves the continuity claimed in (i).
To prove the property (ii), by the equality
\begin{align*}
\varphi_{z(t + s, \, x)}(x) & = \psi_{t + s}(x) 
                          = \psi_t(\psi_s(x)) \\
                         & = \varphi_{z(t, \, \psi_s(x))}(\psi_s(x)) \\
                         & = \varphi_{z(t, \, \psi_s(x) )}(\varphi_{z(s, \, x)}(x)) \\
                         & = \varphi(z(t, \, \psi_s(x) ) + z(s, \, x),\, x)
\end{align*}
and uniqueness of local reparameterization $z$, for $0 < \varepsilon < \varepsilon_0(\varphi)/3$ we conclude that $z(t + s, \, x) = z(t, \, \psi(s, \, x)) + z(s, \, x)$ for any $t, \, s, \, t + s \in [-\mu, \, \mu]$ and every regular point $x \in \Lambda$.
\end{proof}

\subsection{Unique continuous extension for the local reparameterization}\label{sec:extend1}

In what follows we construct an extension of the continuous reparameterization described in Lemma \ref{singepsilon1}
to $\mathbb{R} \times \left(\Lambda \setminus Sing(\varphi\mid_\Lambda)\right)$ . More precisely:

\begin{proposition}\label{extencao}  Let
 $\varphi$ be a continuous flow  in $M$ and $\psi$ be a continuous flow such that there exists $\mu > 0$ and
  $z :\, [-\mu, \, \mu] \times  \left(\Lambda \setminus Sing(\varphi\mid_\Lambda)\right) \rightarrow (-\varepsilon, \, \varepsilon)$ such that $\psi(s, \, x) = \varphi(z(s, \, x), \, x)$ for any $(s, \, x) \in [-\mu, \, \mu]  \times \left(\Lambda \setminus Sing(\varphi\mid_\Lambda)\right)$, where $0 < \varepsilon < \varepsilon_0(\varphi)/3$.
There exists a unique continuous function $p :\, \mathbb{R}  \times \left(\Lambda \setminus Sing(\varphi\mid_\Lambda)\right) \rightarrow \mathbb{R}$ which extends $z$ and satisfies $\psi(s, \, x) = \varphi(p(s, \, x), \, x)$ for any $(s, \, x) \in \mathbb{R}  \times \left(\Lambda \setminus Sing(\varphi\mid_\Lambda)\right)$.
\end{proposition}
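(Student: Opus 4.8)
The plan is to extend $z$ from small times to all of $\mathbb R$ by exploiting the additive cocycle relation of Lemma~\ref{singepsilon1}(ii), namely $z(t+s,x)=z(t,x)+z(s,\psi_t(x))$ for $t,s,t+s\in[-\mu,\mu]$, and summing the infinitesimal reparameterizations along the $\psi$-orbit of $x$. Concretely, given $(s,x)$ I would pick $N\in\mathbb N$ with $|s|/N\le\mu$ and set
\[
p_N(s,x):=\sum_{k=0}^{N-1} z\!\left(\tfrac{s}{N},\,\psi_{ks/N}(x)\right),
\]
a finite sum that makes sense because each point $\psi_{ks/N}(x)$ lies on the $\varphi$-orbit of $x$, hence in $\Lambda\setminus Sing(\varphi)$: indeed $\Lambda$ is $\varphi$-invariant and, by hypothesis, $\psi$ moves points locally along $\varphi$-orbits, so inductively $\psi_{s/N}$ maps regular points of $\Lambda$ to regular points of $\Lambda$, and $z$ applies to every $\psi_{ks/N}(x)$.

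The first step is well-definedness, i.e.\ that $p_N$ does not depend on the admissible choice of $N$. I would show $p_N=p_{MN}$ for every $M\ge1$ by grouping the $MN$ summands into $N$ consecutive blocks of length $M$ and collapsing each block with the iterated identity $\sum_{i=0}^{M-1} z(c,\psi_{ic}(y))=z(Mc,y)$, valid whenever $|Mc|\le\mu$, which follows from Lemma~\ref{singepsilon1}(ii) by induction on $M$. Comparing two admissible values through $p_{N_1}=p_{N_1N_2}=p_{N_2}$ then yields a well-defined $p(s,x):=p_N(s,x)$; taking $N=1$ shows $p$ extends $z$ on $[-\mu,\mu]$, and $p(0,x)=z(0,x)=0$.

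The second step is the identity $\psi_s(x)=\varphi_{p(s,x)}(x)$, which I would obtain by telescoping. Writing $y_k=\psi_{ks/N}(x)$, the relation $y_{k+1}=\psi_{s/N}(y_k)=\varphi_{z(s/N,y_k)}(y_k)$ composes to $y_N=\varphi_{\sum_k z(s/N,y_k)}(x)=\varphi_{p(s,x)}(x)$, and $y_N=\psi_s(x)$. Continuity of $p$ is then immediate: on each slab $\{(s,x):|s|\le N\mu\}$ one has $p=p_N$, a finite sum of compositions of the continuous maps $z$ (Lemma~\ref{singepsilon1}(i)) and the flow $\psi$, and these slabs exhaust $\mathbb R\times(\Lambda\setminus Sing(\varphi))$.

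The last step is uniqueness, and together with the refinement argument above it is where I expect the only real subtlety. If $p,p'$ are two continuous extensions of $z$, then $\varphi_{p(s,x)-p'(s,x)}(x)=x$ for every $s$. Fixing $x$, the continuous function $s\mapsto p(s,x)-p'(s,x)$ is identically zero when $x$ is not periodic, and otherwise takes values in the discrete set $T\mathbb Z$, where the minimal period satisfies $T\ge\varepsilon_0(\varphi)>0$ by Lemma~\ref{e0}; since it vanishes at $s=0$ and $\mathbb R$ is connected, it vanishes identically, so $p=p'$. The two delicate points are thus (a) verifying that the $\psi$-orbit of a regular point never leaves $\Lambda\setminus Sing(\varphi)$, so that every $z(s/N,y_k)$ is defined, and (b) the refinement computation granting independence of $N$; both rest on the cocycle relation and on the uniform bound $\varepsilon_0(\varphi)>0$ that renders local reparameterizations unique.
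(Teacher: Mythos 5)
Your proposal is correct. The core construction is the same telescoping idea as the paper's: the paper fixes a dyadic step $1/2^N<\mu$ and defines, on each interval $[k/2^N,(k+1)/2^N]$, the extension $z_k(t,x)=z(t-k/2^N,x)+\sum_{i=1}^k z(1/2^N,\psi(t-i/2^N,x))$, verifying endpoint consistency and proving $\psi_t(x)=\varphi_{z_k(t,x)}(x)$ by induction using injectivity of the time-$s$ maps; your $p_N(s,x)=\sum_{k=0}^{N-1}z(s/N,\psi_{ks/N}(x))$ is the same sum with a uniform step depending on $s$. The price you pay for the uniform subdivision is the well-definedness (independence of $N$) step, which the paper's fixed-step bookkeeping avoids; your refinement argument $p_N=p_{MN}$ via the iterated cocycle identity handles it correctly, and you rightly flag the two genuine checkpoints (that the $\psi$-orbit of a regular point stays in the $\varphi$-orbit, hence in $\Lambda\setminus\mathrm{Sing}$, and the block-collapsing identity). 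Where you genuinely diverge is uniqueness: the paper fixes $x$, lets $t_0$ be the endpoint of the maximal interval on which $p_1=p_2$, and re-applies the \emph{local} uniqueness of $z$ at $x'=\psi_{t_0}(x)$ to push past $t_0$; you instead observe that $p(s,x)-p'(s,x)$ lies in the period group of the non-singular point $x$, which is a proper closed subgroup of $\mathbb R$, hence discrete, so continuity and connectedness force it to vanish. Your argument is shorter and arguably cleaner (it needs only that $x$ is not a singularity, not the quantitative bound $\varepsilon_0>0$, though invoking Lemma~\ref{e0} does no harm), while the paper's argument reuses machinery it has already set up in Lemma~\ref{singepsilon1}. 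Both are valid.
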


\begin{proof}
We first prove the existence of the reparameterization.
Take $N \ge 1$ so that $2^{-N} < \mu$. Now, let 
$z_1: \, [1/2^N, \, 2/2^N] \times \left(\Lambda \setminus Sing(\varphi\mid_\Lambda)\right) \rightarrow \mathbb{R}$ 
be the continuous function given by 
$$
z_1(t, \, x) = z\Big(t - \dfrac{1}{2^N}, \, x\Big) + z\Big(\dfrac{1}{2^N}, \psi\Big(t - \dfrac{1}{2^N}, \, x\Big)\Big).
$$ 
A simple computation shows that  $z_1(\frac{1}{2^N}, \, x) = z(\frac{1}{2^N}, \, x)$
for any point $x \in \Lambda \setminus Sing(\varphi\mid_\Lambda)$. This means that the functions
coincide in the extreme point of the interval $[0,1/2^N]$.
Now we claim that $\psi(t, \, x) = \varphi(z_1(t, \, x), \, x)$ for every $(t, \, x) \in [1/2^N, \, 2/2^N]  \times \left(\Lambda \setminus Sing(\varphi\mid_\Lambda)\right)$.
Fix $(t, \, x) \in [1/2^N, \, 2/2^N]  \times \left(\Lambda \setminus Sing(\varphi\mid_\Lambda)\right)$. On the one hand, by Lemma \ref{singepsilon1},  
$$
0 = z\Big(\frac{1}{2^N} - \frac{1}{2^N}, \psi(t, \, x)\Big) = z\Big(\frac{1}{2^N}, \psi\Big(t - \frac{1}{2^N}, \, x\Big)\Big) + z\Big(-\frac{1}{2^N}, \psi(t, \, x)\Big)
$$
and consequently, $z(-1/2^N, \psi(t, \, x)) = -z(1/2^N, \psi(t - 1/2^N, \, x))$ for every $(t, \, x) \in [1/2^N, \, 2/2^N] \times \left(\Lambda \setminus Sing(\varphi\mid_\Lambda)\right)$. On the other hand, since $|z(\cdot)|$ is bounded above by $\vep_0(\varphi\mid_\Lambda)/3$,
\begin{align*}
\varphi\Big(-z\Big(\frac{1}{2^N}, \psi&\Big(t - \frac{1}{2^N}, \, x\Big)\Big), \varphi(z_1(x, \, t), \, x)\Big) \\
		& =  \varphi\Big(-z\Big(\frac{1}{2^N}, \psi\Big(t - \frac{1}{2^N}, \, x\Big)\Big) + z_1(x, \, t), \, x\Big) \\
                   & =  \varphi\Big(z\Big(t - \frac{1}{2^N}, \, x\Big), \, x\Big) \\
                   & =  \psi\Big(t - \frac{1}{2^N}, \, x\Big) \\
                   & =  \psi\Big(-\frac{1}{2^N}, \, \psi(t, \, x)\Big) \\
                   & =  \varphi\Big(z\Big(-\frac{1}{2^N}, \, \psi(t, \, x)\Big), \, \psi(t, \, x)\Big) \\
                   & =  \varphi\Big(-z\Big(\frac{1}{2^N}, \psi\Big(t - \frac{1}{2^N}, \, x\Big)\Big), \, \psi(t, \, x)\Big)
\end{align*}
from which the claim follows.
This proves our claim and so $\psi$ is a local reparameterization of the flow $\varphi$ on the interval $[1/2^N, \, 2/2^N]$. 
Inductively, for each integer $k \geq 1$ let
$z_k: \, \Big[\dfrac{k}{2^N}, \, \dfrac{k+1}{2^N}\Big] \times \left(\Lambda \setminus Sing(\varphi\mid_\Lambda)\right) \rightarrow \mathbb{R}$ be the continuous function given by
$$
z_k(t, \, x) = z\Big(t - \dfrac{k}{2^N}, \, x\Big) + \sum\limits_{i=1}^k z\Big(\dfrac{1}{2^N}, \psi\Big(t - \dfrac{i}{2^N}, \, x\Big)\Big).
$$
For any $x \in \Lambda \setminus Sing(\varphi)$ we observe that 
$z_k\Big(\frac{k+1}{2^N}, \, x\Big) = z_{k+1}\Big(\frac{k+1}{2^N}, \, x\Big)$
because
\begin{align*}
z_k\Big(\frac{k+1}{2^N}, \, x\Big) & = z\Big(\frac{1}{2^N}, \, x\Big) + \sum\limits_{i=1}^k z\Big(\frac{1}{2^N}, 
\psi\Big(\frac{k+1-i}{2^N}, \, x\Big)\Big) \\
                     & = z\Big(\frac{1}{2^N}, \, x\Big) + \sum\limits_{j=1}^k z\Big(\frac{1}{2^N}, \psi\Big(\frac{j}{2^N}, \, x\Big)\Big) 
                      = \sum\limits_{j=0}^k z\Big(\frac{1}{2^N}, \psi\Big(\frac{j}{2^N}, \, x\Big)\Big) \\
                      & = z_{k+1}\Big(\frac{k+1}{2^N}, \, x\Big).
\end{align*}
In addition, $z_k$ satisfies the recursive expression
\begin{equation}\label{f)}
z_k(t, \, x) = z_{k-1}(t - 1/2^N, \, x) + z(1/2^N, \, \psi(t - 1/2^N, \, x)).
\end{equation}
Indeed, simple computations yield
\begin{align*}
z_k(t, \, x) & = z\Big(t - \frac{k}{2^N}, \, x\Big) + \sum\limits_{i=1}^k z\Big(\frac{1}{2^N}, \psi\Big(t - \frac{i}{2^N}, \, x\Big)\Big)\\
	    & = z\Big((t- \frac{1}{2^N}) - \frac{k-1}{2^N}, \, x\Big) 
	    + \sum\limits_{i=1}^k z\Big(\frac{1}{2^N}, \psi\Big((t- \frac{1}{2^N}) - \frac{i-1}{2^N}, \, x\Big)\Big)\\
	    & = z\Big((t- \frac{1}{2^N}) - \frac{k-1}{2^N}, \, x\Big) 
	    + \sum\limits_{i=1}^{k-1} z\Big(\frac{1}{2^N}, \psi\Big((t- \frac{1}{2^N}) - \frac{i}{2^N}, \, x\Big)\Big) \\
	   &  + z\Big(\frac{1}{2^N}, \psi\Big(t- \frac{1}{2^N}, \, x\Big)\Big)
\end{align*}
and proves the equality in \eqref{f)}. 
We need the following: \smallskip

\noindent {\bf Claim:} \textit{For every $x \in \Lambda \setminus Sing(\varphi\mid_\Lambda)$ and $t \in \Big[\dfrac{k}{2^N}, \, \dfrac{k+1}{2^N}\Big]$ the function $z_k$ satisfies $\psi(t, \, x) = \varphi(z_k(t, \, x), \, x).$}

\begin{proof}[Proof of the claim] 
The claim for $k = 1$ has already been confirmed.
By induction, assume that the affirmation is true for $k-1$. Then, using (\ref{f)}), we obtain that
\begin{align*}
\varphi\big(-z\big(\frac{1}{2^N}, \,  \psi\big(t - \frac{1}{2^N}, \, x\big)\big), \, \varphi(z_k(t, \, x), \, x)\big)  
									& = \varphi(-z(\frac{1}{2^N}, \, \psi(t - \frac{1}{2^N}, \, x)) + z_k(t, \, x), \, x)\\
                                                                             & = \varphi\big(z_{k-1}\big(t - \frac{1}{2^N}, \, x\big), \, x\big)\\
                                                                             & = \psi\big(t - \frac{1}{2^N}, \, x\big)
                                                                              = \psi\big(-\frac{1}{2^N}, \, \psi(t, \, x)\big)\\
                                                                             & = \varphi\big(z\big(-\frac{1}{2^N}, \, \psi(t, \, x)\big), \, \psi(t, \, x)\big)\\
                                                                             & = \varphi\big(-z\big(\frac{1}{2^N}, \, \psi\big(t - \frac{1}{2^N}, \, x\big)\big), \, \psi(t, \, x)\big).\\
\end{align*}
Since the time-$s$ map $\varphi_s$ is a diffeomorphism for all $s \in \mathbb{R}$, we conclude that $\varphi(t, \, x) = \varphi(z_k(t, \, x), \, x)$ for every $x \in \Lambda \setminus Sing(\varphi\mid_\Lambda)$ and $t \in \Big[\dfrac{k}{2^N}, \, \dfrac{k+1}{2^N}\Big]$, which proves the claim.
\end{proof}

Clearly, a completely similar argument as above is enough to extend $z(t, \, x)$ for all negative $t$. Just
consider the continuous function $\overline{z}_1: \, [-2/2^N, \, -1/2^N] \times \left(\Lambda \setminus Sing(\varphi\mid_\Lambda)\right) \rightarrow \mathbb{R}$ given by
$$\overline{z}_1(t, \, x) = z\Big(t + \dfrac{1}{2^N}, \, x\Big) + z\Big(-\dfrac{1}{2^N}, \varphi\Big(t + \dfrac{1}{2^N}, \, x\Big)\Big).$$
Computations similar to the ones above yield that for any $x \in \Lambda \setminus Sing(\varphi\mid_\Lambda)$, the function  $\overline{z}_1(\cdot, \, x)$ satisfies $\overline{z}_1(-1/2^N, \, x) = z(-1/2^N, \, x)$ and $\varphi(t, \, x) = \varphi(\overline{z}_1(t, \, x), \, x).$
Then, for each positive integer $k$, take $\overline{z}_k: \, \left[-(k+1)/2^N, \, -k/2^N\right]  \times \left(\Lambda \setminus Sing(\varphi\mid_\Lambda)\right) \rightarrow \mathbb{R}$ given by
$$\overline{z}_k(t, \, x) = z\Big(t + \dfrac{k}{2^N}, \, x\Big) + \sum\limits_{i=1}^k z\Big(-\dfrac{1}{2^N}, \varphi\Big(t + \dfrac{i}{2^N}, \, x\Big)\Big),$$
which satisfies
$\varphi(t, \, x) = \varphi(\overline{z}_k(t, \, x), \, x)$
for every $t \in \left[-(k+1)/2^N, \, -k/2^N\right]$ and $x \in \mathbb{R} \times \left(\Lambda \setminus Sing(\varphi\mid_\Lambda)\right)$.
Altogether we get a well-defined continuous function $p : \mathbb{R}  \times \left(\Lambda \setminus Sing(\varphi\mid_\Lambda)\right) \rightarrow \mathbb{R}$ given by
$$p(t, \, x) = \left\{ \begin{array}{cl}
z(t, \, x),              & \mbox{if } t \in [-1/2^N, \, 1/2^N] \\
z_k(t, \, x),            & \mbox{if } t \in \left[k/2^N, \, (k+1)/2^N\right],\;  k \in \mathbb{N}\\
\overline{z}_k(t, \, x), & \mbox{if } t \in \left[-(k+1)/2^N, \, -k/2^N\right] ,\;  k \in \mathbb{N}.
\end{array} \right.
$$
By construction, for every $\psi \in \mathcal{Z}^{\infty}(\varphi)$ there exists a continuous $p$ such that 
$\psi(t, \, x) = \varphi(p(t, \, x), \, x)$ for every $(t, \, x) \in \mathbb{R} \times \left(\Lambda \setminus Sing(\varphi\mid_\Lambda)\right)$. 
This concludes the proof of the existence of the reparameterization. 

\medskip

In the remaining of the proof of the proposition we are left to prove the uniqueness of the reparameterization among non-singular points. For this, suppose there are $p_1, \, p_2 :\, \mathbb{R}  \times \left(\Lambda \setminus Sing(\varphi\mid_\Lambda)\right) \rightarrow \mathbb{R}$, continuous extensions of $z$ such that $\varphi(p_1(t, \, x), \, x) = \psi(t, \, x) = \varphi(p_2(t, \, x), \, x)$ for all $(t, \, x) \in \mathbb{R}  \times \left(\Lambda \setminus Sing(\varphi\mid_\Lambda)\right)$. Fix $x \in \Lambda \setminus Sing(\varphi\mid_\Lambda)$ and consider the continuous function $\alpha_x(t) = p_1(t, \, x) - p_2(t, \, x)$.
By Lemma \ref{singepsilon1}, $\alpha_x^{-1}(0) \supset [-\mu, \, \mu]$ (thus $\alpha_x^{-1}(0)$ is non empty). Moreover, 
$\alpha_x^{-1}(0)$ is closed by the continuity of $\alpha_x$. 

Assume, by contradiction, that $\alpha_x^{-1}(0)\neq \mathbb{R}$, Then, as $\alpha_x$ is continuous, there exists $t_0 = \max\{t > 0: [0, \, t] \subset \alpha_x^{-1}(0)\} \geq \mu$ or $\min\{t < 0: [t, \, 0] \subset \alpha_x^{-1}(0)\} \leq -\mu$. We assume the first case holds (the second is completely analogous).
By continuity of the reparameterizations $p_1(t_0, \, x) = p_2(t_0, \, x)$. Moreover, if $t \in [-\mu, \, \mu]$, then 
$\varphi(p_i(t + t_0, \, x), \, x) = \psi(t + t_0, \, x) = \psi(t, \, \psi(t_0, \, x))$ for $i \in \{1, \, 2\}$.

Define $x' = \varphi(p_i(t_0,x),x)$ (which is the same point for $i = 1,2$). Since for each $i = 1,2$ there holds 
$\psi(t,x) =\varphi(p_i(t,x),x))$ for all $(t, x) \in \mathbb R \times (\Lambda \setminus Sing(\varphi \mid_\Lambda))$, then $x' = \psi(t_0, x)$. 
By Lemma \ref{singepsilon1} we know the existence of a unique function $z :\, [-\mu, \, \mu] \times \left(\Lambda \setminus Sing(\varphi\mid_\Lambda)\right) \rightarrow (-\varepsilon, \, \varepsilon)$ such that $\psi(s, \, x) = \varphi(z(s, \, x), \, x)$ for any $(s, \, x) \in [-\mu, \, \mu] \times \left(\Lambda \setminus Sing(\varphi\mid_\Lambda)\right)$.
Then for $t \in [-\mu, \mu]$ there holds $\psi(t, x') = \varphi(z(t, x'), x')$.
Since $\psi(t, x') = \psi(t, \psi(t_0, x)) = \psi(t + t_0, x) = \varphi(p_i(t + t_0, x), x)$, there holds
$$
\varphi(z(t, x'), x') = \varphi(p_i(t + t_0, x), x).
$$
This implies, by the uniqueness of $z$, that $p_1(t + t_0, x) = p_2(t + t_0, x)$ for all $t\in [-\mu,\mu]$
contradicting the maximality of $t_0$.
This completes the proof of Proposition~\ref{extencao}.
\end{proof}

\subsection{Invariance of reparameterizations along regular orbits}\label{sec:invarian1}

In what follows we prove that the unique reparameterization obtained in Proposition~\ref{extencao} is invariant along orbits of regular points.

\begin{lemma}\label{lemainvorbitas1}
If $p$ is the reparameterization given in Proposition~\ref{extencao} then $p(t, \, x) = p(t, \, \varphi(s, \, x))$ for every $t,s \in \mathbb{R}$
and any $x \in \Lambda \setminus Sing(\varphi\mid_\Lambda)$.
Moreover, there exists a unique continuous function $A : \Lambda \setminus Sing(\varphi\mid_\Lambda)\rightarrow \mathbb{R}$ so that 
$A(\varphi(t, \, x))=A(x)$ and
$p(t, \, x) = A(x) t$ for every $t \in \mathbb{R}$ and $x \in \Lambda \setminus Sing(\varphi\mid_\Lambda)$.
\end{lemma}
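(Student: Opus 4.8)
The plan is to proceed in three steps: first establish the orbit invariance $p(t,x) = p(t,\varphi(s,x))$, then promote the local cocycle relation of Lemma~\ref{singepsilon1}(ii) to a \emph{global} additive relation for $p$, and finally solve the resulting Cauchy functional equation. Throughout, the key device for handling periodic points will be that a continuous real function taking values in a discrete set is constant.

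First I would prove orbit invariance. Fix a regular point $x$, set $y = \varphi(s,x)$ (which is again regular), and use that $\psi$ commutes with $\varphi$. On the one hand $\psi(t,y) = \varphi(p(t,y),y)$; on the other hand $\psi(t,\varphi(s,x)) = \varphi(s,\psi(t,x)) = \varphi(s,\varphi(p(t,x),x)) = \varphi(p(t,x),y)$. Comparing these gives $\varphi_{\,p(t,y)-p(t,x)}(y)=y$, so the continuous function $g(t) := p(t,y)-p(t,x)$ takes values in the stabilizer $\{c\in\mathbb{R}: \varphi_c(y)=y\}$, which is a discrete subgroup of $\mathbb{R}$: it is trivial when $y$ is non-periodic, and equals $T\mathbb{Z}$ when $y$ is periodic of minimal period $T>0$ (here Lemma~\ref{e0} guarantees $T$ is bounded away from $0$). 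Since $p(0,\cdot)=0$ forces $g(0)=0$, and a continuous map from the connected set $\mathbb{R}$ into a discrete set is constant, I conclude $g\equiv 0$, that is, $p(t,x)=p(t,\varphi(s,x))$ for all $t,s$.

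Second, I would upgrade the cocycle property. The identity $\psi_{t+s}=\psi_t\circ\psi_s$ yields $\varphi_{\,p(t+s,x)}(x) = \varphi_{\,p(t,\psi_s(x))+p(s,x)}(x)$, so the continuous map $t\mapsto p(t+s,x)-p(t,\psi_s(x))-p(s,x)$ again takes values in the discrete stabilizer of $x$ and vanishes at $t=0$; hence it is identically zero, giving $p(t+s,x)=p(t,\psi_s(x))+p(s,x)$ for all $t,s$. Since $\psi_s(x)=\varphi(p(s,x),x)$ lies on the $\varphi$-orbit of $x$, the orbit invariance from the first step yields $p(t,\psi_s(x))=p(t,x)$, and the relation collapses to the additive identity $p(t+s,x)=p(t,x)+p(s,x)$.

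Finally, for each fixed $x$ the map $t\mapsto p(t,x)$ is continuous and additive, so by the classical solution of Cauchy's functional equation it is linear: $p(t,x)=A(x)\,t$ with $A(x):=p(1,x)$. Continuity of $A$ follows from continuity of $p$ (Proposition~\ref{extencao}); orbit invariance $A(\varphi(s,x))=A(x)$ follows by evaluating the first step at $t=1$; and uniqueness is immediate, since any such function must equal $p(1,x)$. I expect the only genuine subtlety to be the treatment of periodic orbits, where the stabilizer is a nontrivial lattice rather than $\{0\}$; this is precisely where the discreteness-plus-continuity argument (together with the uniform lower bound on periods from Lemma~\ref{e0}) is needed, whereas on non-periodic orbits all the identities hold verbatim and at once.
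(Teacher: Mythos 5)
Your proposal is correct, and its overall skeleton (orbit invariance via commutation, then global additivity of $p(\cdot,x)$, then linearity via Cauchy's functional equation for a continuous additive function) is exactly the paper's. The one genuine difference lies in how you cancel the flow map in identities of the form $\varphi_a(x)=\varphi_b(x)$: the paper invokes the uniqueness of the reparameterization from Lemma~\ref{singepsilon1} and Proposition~\ref{extencao} (first on $[-\mu,\mu]$, where $|a-b|<\tfrac{2}{3}\varepsilon_0$ forces $a=b$ by the definition of $\varepsilon_0$, and then propagates via the recursive construction \eqref{f)}), whereas you observe directly that the stabilizer $\{c\in\mathbb{R}:\varphi_c(x)=x\}$ of a regular point is a closed proper subgroup of $\mathbb{R}$, hence discrete, so any continuous difference function vanishing at $t=0$ and taking values in it is identically zero. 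Your route is self-contained, handles periodic orbits transparently in one stroke, and avoids relying on the somewhat delicate phrasing of ``uniqueness of $p$'' for auxiliary functions that are not literally extensions of $z$; the paper's route has the advantage of reusing machinery already set up and of keeping all quantitative control tied to $\varepsilon_0(\varphi\mid_\Lambda)$. One small remark: the uniform lower bound on periods from Lemma~\ref{e0} is not actually needed for your pointwise argument, since discreteness of the stabilizer at each fixed regular point suffices.
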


\begin{proof} Initially we observe that, since $\psi$ commutes with $\varphi$,
\begin{align*}
\varphi(s + p(t, \, x), \, x) & = \varphi(s, \, \varphi(p(t, \, x), \,x)) 
                               = \varphi(s, \, \psi(t, \, x)) \\
                              & = \psi(t, \, \varphi(s, \, x)) 
                               = \varphi(p(t, \, \varphi(s, \, x)), \, \varphi(s, \, x)) \\
                              & = \varphi(p(t, \, \varphi(s, \, x)) + s, \, x).
\end{align*}
Therefore, for $\mu$ sufficiently small and $t \in [-\mu, \mu]$ this equality implies that 
\begin{equation}\label{eq:Ainv}
p(t, \, x) = p(t, \, \varphi(s, \, x))
\end{equation}
for every $s\in\mathbb R$.
From the construction and uniqueness of function $p$ together with the recursive expression (\ref{f)})
it follows 
that $p(t, \, x) = p(t, \, \varphi(s, \, x))$ for all $t, s \in \mathbb{R}$. Using that 
$p(t, \, x) = p(t, \, \varphi(s, \, x))$ for every $s\in\mathbb R$ together with Proposition~\ref{extencao},
\begin{align*}
\varphi(p(t  + s, \, x), \, x) & = \psi(t + s, \, x) 
                                = \psi(t, \, \psi(s, \, x)) \\
                               & = \varphi(p(t, \, \psi(s, \, x)), \, \psi(s, \, x)) \\
                               & = \varphi(p(t, \, \varphi(p(s, \, x), \, x)), \, \varphi(p(s, \, x), \, x)) \\
                               & = \varphi(p(t, \, \varphi(p(s, \, x), \, x)) + p(s, \, x), \, x) \\
                               & = \varphi(p(t, \, x) + p(s, \, x), \, x)
\end{align*}
for all $t \in \mathbb{R}$. The uniqueness of $p$ implies that $p(t  + s, \, x) = p(t, \, x) + p(s, \, x)$ for all $t,s$.
Since $p(\cdot, x)$ is continuous then it is linear. Hence there exists a continuous map $A: M \rightarrow \mathbb{R}$ 
so that $p(t, \, x) = A(x) t$ for all $x \in \Lambda \setminus Sing(\varphi\mid_\Lambda)$. Together with \eqref{eq:Ainv},
this proves that $A(\varphi(t, \, x))=A(x)$ for every $t \in \mathbb{R}$ and $x \in \Lambda \setminus Sing(\varphi\mid_\Lambda)$
and finishes the proof of the lemma.
\end{proof}

\subsection{Extension of the reparameterization to singular points}\label{sec:extension}

Under the assumption that all singularities are hyperbolic and satisfy non-ressonance conditions, we will prove that the reparameterization obtained 
in Proposition~\ref{extencao} can be continuously extended to the singular points.
For this, initially we deduce the following version of Kopell's theorem (\cite{Ko70}, Theorem 6) for linear contractions.
For any complex number $\lambda\in\mathbb C$ let $\emph{Re}(\lambda)$ denote its real part.

\begin{lemma}\label{kopellflows} 
Given $B\in GL(n,\mathbb R)$ let $\varphi = (e^{t \cdot B})_{t\in\mathbb R}$ be such that $0$ is a sink and assume that
it has non-resonant eigenvalues.  If $\lambda_1, \, \cdots, \, \lambda_n$ are the eigenvalues  of $B$ and $m$ is the least 
positive integer such that 
\begin{equation}\label{eq:K}
m \cdot \Big(\max\limits_{1 \leq i \leq n} \emph{Re}(\lambda_i) \Big) < \min\limits_{1 \leq i \leq n} \emph{Re}(\lambda_i)
\end{equation}
then $\mathcal{Z}^m(\varphi)$ is the set of linear flows $(e^{s \cdot C})_{s \in \mathbb{R}}$ where $C\in GL(n,\mathbb R)$ is such that $B \cdot C = C \cdot B$.
\end{lemma}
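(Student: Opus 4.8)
The plan is to reduce the statement to Kopell's rigidity theorem for a single linear contraction, applied to the time-one map $f := e^{B}$. The inclusion ``$\supseteq$'' is immediate: if $C\in GL(n,\mathbb R)$ commutes with $B$ then $e^{sC}e^{tB}=e^{tB}e^{sC}$ for all $s,t\in\mathbb R$, so $(e^{sC})_{s}\in\mathcal Z^m(\varphi)$. For the reverse inclusion I would fix $\psi\in\mathcal Z^m(\varphi)$ and $s\in\mathbb R$ and set $g:=\psi_s$, a $C^m$ diffeomorphism commuting with $\varphi_t=e^{tB}$ for every $t$. The first observation is that $g$ fixes the origin: since $0$ is the unique (sink) fixed point of each $\varphi_t$ and $\varphi_t(g(0))=g(\varphi_t(0))=g(0)$, we get $g(0)=0$.

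Next I would check that $f=e^{B}$ meets the hypotheses of Kopell's Theorem~6. Because $0$ is a sink, $\text{Re}(\lambda_i)<0$ for all $i$, so the eigenvalues $\mu_i=e^{\lambda_i}$ of $f$ satisfy $|\mu_i|=e^{\text{Re}(\lambda_i)}<1$ and $f$ is a linear contraction. Taking moduli converts the additive non-resonance of the $\lambda_i$ into the multiplicative non-resonance of the $\mu_i$ demanded by Kopell, and the defining inequality \eqref{eq:K} exponentiates to the spectral domination $(\max_i|\mu_i|)^m<\min_i|\mu_i|$. With these hypotheses in place, Kopell's theorem guarantees that every $C^m$ germ at $0$ commuting with $f$ is the germ of a linear map. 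I expect this translation of both the non-resonance condition and the exponent $m$ between the additive (vector field) and multiplicative (diffeomorphism) settings to be the main obstacle, together with upgrading local to global linearity.

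To pass from a local to a global conclusion I would use that $f$ contracts every point to $0$. If $g$ coincides with a linear map $L$ on a neighborhood $U$ of the origin, then for arbitrary $x$ one picks $k$ with $f^k(x)\in U$ and writes $g(x)=f^{-k}(g(f^k(x)))=e^{-kB}\,L\,e^{kB}x$, which is linear in $x$; comparing with $g=L$ on $U$ forces $L\,e^{kB}=e^{kB}L$ for large $k$, hence $L$ commutes with $f$ and $g=L$ on all of $\mathbb R^n$. Thus $\psi_s=L(s)\in GL(n,\mathbb R)$ for every $s$.

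Finally I would recover the generator. The flow relations $\psi_0=\mathrm{Id}$ and $\psi_{s_1+s_2}=\psi_{s_1}\circ\psi_{s_2}$ say that $s\mapsto L(s)$ is a one-parameter subgroup of $GL(n,\mathbb R)$, continuous because $\psi$ is $C^m$; hence $L(s)=e^{sC}$ with $C=\frac{d}{ds}L(s)\big|_{s=0}$. Since $\psi_s$ commutes with $\varphi_t$ for all $s,t$, we have $e^{sC}e^{tB}=e^{tB}e^{sC}$, and differentiating in $s$ and $t$ at the origin yields $CB=BC$. This identifies $\mathcal Z^m(\varphi)$ with the linear flows $(e^{sC})_s$ whose generator commutes with $B$, as claimed.
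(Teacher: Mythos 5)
Your proposal is correct and follows essentially the same route as the paper: reduce to Kopell's Theorem~6 applied to the time-one map $e^{B}$ (checking that the additive non-resonance and the inequality \eqref{eq:K} exponentiate to the multiplicative hypotheses), conclude that each $\psi_s$ is linear, identify the resulting one-parameter group of linear maps with $(e^{sC})_s$, and deduce $BC=CB$. The only cosmetic difference is that the paper proves $\psi_s=e^{sC}$ by a direct telescoping estimate rather than by citing the classification of continuous one-parameter subgroups of $GL(n,\mathbb R)$, and your local-to-global step via $g=f^{-k}\circ g\circ f^{k}$ is if anything spelled out more carefully than in the paper.
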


\begin{proof} 
As $0$ is a sink for $B$ then $e^B$ is a linear contraction and, 
the fact that eigenvalues of $B$ satisfy condition ~\eqref{eq:K} implies that the non-ressonance
conditions in \cite[Theorem~6]{Ko70} hold. 
Since $0$ is a sink then all eigenvalues $\lambda_1, \, \cdots, \, \lambda_n$ of $B$
have real negative part and $| e^{\lambda_i} |< 1$ for all $1\le i \le n$.
Thus, if $m$ is given by \eqref{eq:K} above, the result of Kopell implies that the $\cC^m$-centralizer
of the linear automorphism $e^B$ acting on $\mathbb R^n$ 
is constituted exclusively by linear maps. So, any element of $\mathcal{Z}^m(\varphi)$ is a flow 
$\psi=(\psi_s)_{s \in \mathbb{R}}$ such that $\psi_s$ is a linear map for all $s \in \mathbb{R}$ (since all of these should commute with the time-$1$ map $e^B)$.

In what follows we show that any flow of linear maps $\psi\in \mathcal{Z}^m(\varphi)$ is of exponential type. 
We claim that $\psi_s=e^{sC}$ for every $s\in \mathbb R$, where
\begin{equation*}
C (x) := \lim\limits_{s \rightarrow 0} \frac{\psi_s(x) - x}{s}
	= \frac{\partial \psi_s}{\partial s} \mid_{s=0} (x)
\end{equation*}
for $x\in \mathbb R^n$. Let $T > 0$ be fixed.  Since the maps $t \rightarrow \|e^{t C}\|$ and $s \rightarrow \|\psi_s\|$ are continuous 
there exists a constant $K > 0$  (depending on $T$) such that $\|\psi_s\| \cdot \|e ^{t C}\| \leq K$ for all $0 \leq t, \, s \leq T$.
Given $\varepsilon > 0$, as $C = \lim\limits_{s \rightarrow 0} \frac{e^{s C} - Id}{s}$,  one can choose $0<\delta \le T$ 
such that 
$$
\|\frac{\psi_h - e^{h C}}{h}\| < \frac{\varepsilon}{KT} \quad\text{for all}\quad 0 \leq h \leq \delta.
$$
Now pick $n \in \mathbb{N}$ such that $\frac{T}{n} < \delta$. Then, using the triangular inequality, for any $t \in [0, \, T]$
\begin{align*}
\|\psi_t - e^{t C}\| 
	& = \|\psi_{n \cdot \frac{t}{n}} - e^{n \frac{t}{n} C}\|  \\
	& \leq \sum\limits_{k=0}^{n-1} \|\psi_{(n-k) \cdot \frac{t}{n}} \circ e^{k \frac{t}{n} C} - \psi_{(n-k-1) \cdot \frac{t}{n}} \circ e^{(k+1) \frac{t}{n} C}\| \\
	& \leq \sum\limits_{k=0}^{n-1} \|\psi_{(n-k-1) \cdot \frac{t}{n}}\| \cdot \|\psi_{\frac{t}{n}} - e^{\frac{t}{n} C}\| 
		\cdot \|e^{k \frac{t}{n} C}\| \\
	& \leq  K n \cdot \frac{\varepsilon}{K T } \cdot \frac{t}{n} \le \varepsilon.
\end{align*}
Since $\vep$ was chosen arbitrary the later proves that $\psi_t = e^{t C}$ for all $t \in [0, \, T]$. 
The group property implies that the equality holds for all $t \in \mathbb{R}$. Finally, such flows commute if and only
if $BC=CB$. This completes the proof of the lemma.
\end{proof}

Next, we use Lemma~\ref{kopellflows} to prove that the reparameterization can be continuously extended to 
the singularities $\sigma_i$. Indeed, we will prove that the restriction of the local reparameterizations to the stable and  unstable manifolds  $W^s(\sigma_i) \setminus \sigma_i$ and $W^u(\sigma_i) \setminus \sigma_i$ of a non-resonant hyperbolic singularity $\sigma_i$ of a $C^\infty$-flow $\varphi$ are necessarily constant.
This, together with the fact that 
$$
\mathcal{Z}^{\infty}(\varphi) \subset \mathcal{Z}^m(\varphi) \subset \mathcal{Z}^1(\varphi)
$$ 
for every $m \in \mathbb{N}$ and every $C^\infty$-flow $\varphi$, will be a key step in the proof of the theorem.

\begin{lemma}\label{extencaosing} 
Let $\varphi: \mathbb{R} \times \Lambda \rightarrow \Lambda$ a $C^\infty$-expansive flow defined on a compact Riemannian manifold $M$. Suppose that the singularities $\sigma_1, \, \cdots, \, \sigma_k$ of $\varphi$ are hyperbolic, set $B_i=\frac{d}{dt}\varphi(t, \, \sigma_i)_{|t = 0}$ and let $T_{\sigma_i} M= E_ i^s \oplus E_ i^u $ be the hyperbolic splitting, $1 \leq i \leq k$. 
If the eigenvalues of $B_i \mid_{E^*_i}$ are distinct and non resonant for every $1 \leq i \leq k$ and $*\in\{s,u\}$
then the continuous function $A(\cdot)$ given by Lemma~\ref{lemainvorbitas1} admits a continuous extension to $\Lambda$.
\end{lemma}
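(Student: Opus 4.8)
The plan is to analyze each hyperbolic singularity $\sigma_i$ separately; since the singularities are hyperbolic and $\Lambda$ is compact they are finite in number, so it suffices to produce a continuous extension of $A$ near each $\sigma_i$, with a single limiting value. First I would record two structural facts. Because $\psi$ commutes with $\varphi$ and $s\mapsto \psi_s(\sigma_i)$ is continuous with values in the finite singular set, $\psi$ fixes each $\sigma_i$ and hence preserves $W^s(\sigma_i)$ and $W^u(\sigma_i)$. Writing $Y$ for the generator of $\psi$, one has $Y(\sigma_i)=0$, and differentiating $[X,Y]=0$ at $\sigma_i$ (where both fields vanish) gives $[B_i,C_i]=0$ for $B_i=DX(\sigma_i)$ and $C_i=DY(\sigma_i)$. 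As $E_i^s,E_i^u$ are the spectral subspaces of $B_i$, commutativity forces $C_i$ to be block diagonal, $C_i=C_i^s\oplus C_i^u$ with respect to $T_{\sigma_i}M=E_i^s\oplus E_i^u$.

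Next I would linearize. By Sternberg's theorem the non-resonance hypothesis makes $\varphi$ $C^m$-conjugate near $\sigma_i$ to $x\mapsto e^{tB_i}x$, with $m$ chosen as large as the value required in Lemma~\ref{kopellflows}. Restricting to the stable manifold, $\varphi|_{W^s(\sigma_i)}$ becomes the linear sink $e^{tB_i^s}$, where $B_i^s:=B_i\mid_{E_i^s}$; since $\psi$ preserves $W^s(\sigma_i)$ and is $C^m$, Lemma~\ref{kopellflows} gives $\psi|_{W^s(\sigma_i)}=e^{sC_i^s}$ with $B_i^sC_i^s=C_i^sB_i^s$. Comparing with $\psi_s(x)=\varphi_{A(x)s}(x)=e^{A(x)sB_i^s}x$ and differentiating at $s=0$ yields $C_i^sx=A(x)B_i^sx$ for every nonzero $x\in E_i^s$; since $B_i^s$ is invertible, every nonzero vector is an eigenvector of $(B_i^s)^{-1}C_i^s$, hence $(B_i^s)^{-1}C_i^s=c_i^s\,\mathrm{Id}$ and $A\equiv c_i^s$ on the local stable manifold minus $\sigma_i$. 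Orbit-invariance of $A$ (Lemma~\ref{lemainvorbitas1}) propagates this to all of $W^s(\sigma_i)\setminus\{\sigma_i\}$, because every such point flows into the linearizing neighborhood in forward time. Applying the same argument to the time-reversed flow on $W^u(\sigma_i)$, for which $0$ is a sink, gives $A\equiv c_i^u$ on $W^u(\sigma_i)\setminus\{\sigma_i\}$ and $C_i^u=c_i^uB_i^u$.

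Finally I would reconcile the two constants and establish continuity. Working in a $C^1$-chart at $\sigma_i$ I write $X(x)=B_ix+o(|x|)$ and $Y(x)=C_ix+o(|x|)$ with $C_i=c_i^sB_i^s\oplus c_i^uB_i^u$. Since $Y(x)=A(x)X(x)$ on regular $x$ and $B_i$ is invertible, $|X(x)|\ge \mathrm{const}\cdot|x|$ forces $A$ to be bounded near $\sigma_i$. Projecting the relation onto $E_i^s$ gives $(A(x)-c_i^s)B_i^sx^s=o(|x|)$, so $A(x)\to c_i^s$ whenever $|x^s|\gtrsim|x|$; projecting onto $E_i^u$ gives $A(x)\to c_i^u$ whenever $|x^u|\gtrsim|x|$. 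Taking a sequence with comparable stable and unstable components forces $c_i^s=c_i^u=:c_i$, and then for every regular $x\to\sigma_i$ at least one projection applies, so $A(x)\to c_i$. Setting $A(\sigma_i)=c_i$ produces the desired continuous extension of $A$ to $\Lambda$.

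I expect the main obstacle to be exactly this last reconciliation: Lemma~\ref{kopellflows} only applies to sinks (respectively sources), so it yields two a priori unrelated constants $c_i^s$ and $c_i^u$ on the stable and unstable manifolds, and the crux is to show they coincide. The argument hinges on the first-order expansion of the commuting relation $Y=A\cdot X$ at the saddle together with the block-diagonal structure of $C_i$, which requires the boundedness of $A$ near $\sigma_i$ and careful control of the directions of approach.
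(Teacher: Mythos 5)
Your proposal is essentially correct and shares the paper's backbone --- Sternberg linearization plus Kopell's lemma (Lemma~\ref{kopellflows}) to show that $A$ is constant on $W^s(\sigma_i)\setminus\{\sigma_i\}$ and on $W^u(\sigma_i)\setminus\{\sigma_i\}$ --- but you handle the crucial reconciliation step at a saddle by a genuinely different argument. The paper identifies $c_i^s$ with $c_i^u$ via a connecting-orbit argument: it takes regular points $x_n\to x\in W^s(\sigma_i)\setminus\{\sigma_i\}$ lying off the invariant manifolds, flows them until they hit a cross-section transversal to $W^u(\sigma_i)$, and uses orbit-invariance and continuity of $A$ at regular points to get $A(x)=A(x')$. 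You instead expand $Y=A\cdot X$ to first order at $\sigma_i$, use $[B_i,C_i]=0$ to block-diagonalize $C_i$, and compare the projections of the identity onto $E_i^s$ and $E_i^u$ along a sequence with comparable components. Both routes rely on the same implicit hypothesis that $\Lambda$ contains regular points arbitrarily close to $\sigma_i$ off $W^s(\sigma_i)\cup W^u(\sigma_i)$; in your version you should add the observation that such points can be slid along their orbits to the point of closest approach to $\sigma_i$, where the two components are comparable, and that these closest-approach points still accumulate on $\sigma_i$. In compensation, your infinitesimal argument delivers the continuity of the extension along \emph{arbitrary} sequences of regular points (the ``at least one projection applies'' step), a point the paper leaves implicit after proving constancy only on $W^s(\sigma_i)\cup W^u(\sigma_i)$.

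One inaccuracy to repair: you invoke Sternberg's theorem to linearize $\varphi$ in a $C^m$ way on a full neighborhood of a saddle $\sigma_i$. The hypothesis of the lemma is only that the stable and the unstable spectra are \emph{separately} non-resonant; it does not exclude resonance relations mixing stable and unstable eigenvalues, so the full smooth linearization is not available. This is precisely why the paper uses Hartman--Grobman on the neighborhood and applies Sternberg only to the restrictions of the flow to the local stable and local unstable manifolds, each of which is a sink (for $\varphi_t$, respectively $\varphi_{-t}$) with non-resonant spectrum. Your argument survives unchanged under this repair, since Step 2 only uses the linearization of the restricted flows and Step 3 only uses a $C^1$ Taylor expansion at $\sigma_i$.
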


\begin{proof}
Since the singularities of $\varphi$ are hyperbolic then these are isolated and, for that reason, it is enough to extend
the function $A(\cdot)$ to each singularity recursively. 
We subdivide the proof in two cases, corresponding to the case where the singularities are either sinks/sources
or saddles. 

\medskip

\noindent {\emph{Case 1}}: $\sigma_i$ is a sink or a source.

\smallskip
\noindent Assume without loss of generality that $\sigma_i$ is a sink. Indeed, in the case that $\sigma_i$ is a source 
the proof is completely analogous just by considering the time reversed flow $(\varphi_{-t})_{t\in\mathbb R}$.
Since the eigenvalues of $B_i = \frac{d}{dt}\varphi(t, \, \sigma_i)|_{t = 0}$ are non-resonant, by Sternberg linearization theorem (see~\cite{Stern1}) there exists a neighborhood $W_i$ of $\sigma_i$ such that 
the flow $(\varphi_t)_t$ is $C^\infty$-linearizable in $W_i$: there exists a $C^\infty$-chart $\zeta_i$
on an open set in $\mathbb R^{\dim M}$ so that $\eta_t:=\zeta_i\circ \varphi_t \circ \zeta_i^{-1}$, $t\in \mathbb R$, defines a linear flow on $W_i$.
This conjugation $\zeta_i$ induces a natural isomorphism between $\mathcal{Z}^\infty((\varphi_t)_{t\in\mathbb{R}})$ and $\mathcal{Z}^\infty((\eta_s)_{s\in\mathbb{R}})$ in the sense that $(\psi_s)_{{s\in\mathbb{R}}} \in 
\mathcal{Z}^\infty((\varphi_t)_{t\in\mathbb{R}})$ if and only if $(h \circ \psi_s \circ h^{-1})_{{s\in\mathbb{R}}} \in \mathcal{Z}^\infty((\eta_t)_{t\in\mathbb{R}})$.

We use this fact to determine the centralizer of $\varphi$ in a neighborhood of the singularities $\sigma_i$.
On the one hand, Lemma \ref{kopellflows} implies that the centralizer 
$\mathcal{Z}^{\infty}((e^{s B_i})_{s\in \mathbb{R}})$ is formed by the linear flows
$(e^{s C})_{s\in \mathbb{R}}$ where the linear map $C$ satisfies $B_i \cdot C = C \cdot B_i$.
On the other hand, it follows from Lemmas \ref{singepsilon1} and~\ref{lemainvorbitas1} and Proposition~\ref{extencao} 
that any  $\psi \in \mathcal{Z}^{\infty}(\varphi)$ is reparameterization of $\varphi$, meaning that there exists continuous function $A: W_i \setminus\{\sigma_i\}
\to \mathbb R$ so that $\psi_t(x) = \varphi_{A(x) \cdot t} ( x)$ for all $t\in \mathbb R$.
Thus we are interested in determining which linear flows $(e^{s C})_{s\in \mathbb R}$ preserve the orbits of 
the linear flow $(e^{t B_i})_{t\in\mathbb R}$. 
We claim that all such flows are of the form $(e^{s C})_{s\in \mathbb R}$ with $C = c \, B_i$, for some $c \in \mathbb{R}$.
If, for all $c \in \mathbb{R}$, $C \neq c \, B_i$  then there would exist $x \in \mathbb{R}^n$ such that the vectors 
$\{B_i (x), \; C (x) \}$ are linearly independent and, consequently, the orbits of $x$ by the two flows are 
transversal at $x$. This would contradict the fact that $(e^{s C} x)_{s\in\mathbb R}$ is a reparameterization of $(e^{t B_i} x)_{t\in\mathbb R}$ and proves the claim.
Finally we conclude via the conjugation given by Sternberg linearization theorem 
that, in the linearizing coordinates,  the function $A(x)$ given by Lemma \ref{lemainvorbitas1} is constant in 
$W^s(\sigma_i) \setminus \{\sigma_i\}$, hence it admits a continuous extension to $\sigma_i$.

\medskip

\noindent{\emph{Case 2:} $\sigma_i$ is a saddle.}
\smallskip

\noindent Let $W_i$ be a neighborhood of $\sigma_i$ given by Hartman-Grobman's theorem:
the flow $\varphi$ is topologically conjugate to the hyperbolic linear flow $(e^{t B_i})_{t\in \mathbb R}$ on $W_i$,
where $B_i = \frac{d}{dt}\varphi(t, \, \sigma_i)|_{t = 0}$.
 Choose a suitable base of $\mathbb R^d$ so that $B_i =$ diag$\{B_{1,i}, \, B_{2,i}\}$ and 
$B_{1,i}$ and $B_{2,i}^{-1}$ are contractions. Moreover, since the eigenvalues of $B$ are non-resonant,
we may reduce $W_i$ if necessary 
to guarantee that the flow $\varphi$ restricted to the open neighborhood $S_i := W_i \cap W^s(\sigma)$ of $p$ in $W^s(\sigma_i)$ (resp. open neighborhood $U_i := W_i \cap W^u(\sigma)$ of $p$ in $W^u(\sigma_i)$)
is $C^\infty$-conjugate to the linear contraction $(e^{t B_{1,i}})_{t\in\mathbb R}$ (resp. to the linear
expansion $(e^{t B_{2,i}})_{t\in\mathbb R}$).
Using Case 1 to deal, independently, with both linear flows we deduce that  the reparameterization $A(x)$ given by 
Lemma~\ref{lemainvorbitas1} is constant along both invariant manifolds $W^s(\sigma_i) \setminus \{\sigma_i\}$ and $W^u(\sigma_i) \setminus \{\sigma_i\}$.
To conclude that $A(x)$ extends continuously to $\sigma_i$ it is enough to show that, in the linearizing coordinates, 
there exists $c\in \mathbb R$ so that $A(x) = c$ is constant for all $x \in [W^s(\sigma_i) \cup W^u(\sigma_i)] \setminus \{\sigma_i\}$.
For this, consider a compact cross-section $\Sigma$ that is transversal to $W^s(\sigma_i)$, a compact section $\Sigma^{'}$ that it is transversal to $W^u(\sigma_i)$, a point $x \in \Sigma \cap W^s(\sigma_i)$ and a sequence $(x_n)$ of regular points in $\Sigma \setminus (W^s(\sigma_i) \cup W^u(\sigma_i))$ such that $x_n \rightarrow x$ when $n \rightarrow \infty$.
For all $n \geq 1$ large enough there exists a sequence $(t_n)$ in $\mathbb{R}$ such that 
$x_n'=\varphi(A(x_n) t_n, \, x_n) \in \Sigma^{'}$. By compactness of $\Sigma^{'}$ there exists a convergent subsequence 
$x_{n_k}' \to x' \in W^u(\sigma_i)$. Then, by the continuity of $A$ in $\mathbb{R}  \times \left(\Lambda \setminus
 Sing(\varphi\mid_\Lambda)\right)$ and its invariance along the orbits (cf. Proposition~\ref{extencao} and Lemma~\ref{lemainvorbitas1}) 
 imply that
\begin{align*}
A(x) = \lim\limits_{k \rightarrow \infty} A(x_{n_k}) & = \lim\limits_{k \rightarrow \infty} A(\varphi (A(x_{n_k}) t_{n_k}, (x_{n_k})) \\
                                                      & = A (\lim\limits_{k \rightarrow \infty} \varphi (A(x_{n_k}) t_{n_k}, x_{n_k})) 
                                                      = A(x^{'}).
\end{align*}
Thus $A$  extends continuously and uniquely to a reparameterization on $\Lambda$ so that 
$\psi_t(x) = \varphi_{A(x) t} (x)$ for all $(t, \, x) \in \mathbb{R} \times \Lambda$, $\psi \in \mathcal{Z}^{\infty}(\varphi)$. 
This completes the proof of the lemma.
\end{proof}

The later completes the proof of Theorem~\ref{thm:flows}.

\begin{remark}
In the case that the compact invariant set $\Lambda$ (cf. the statement of Lemma~\ref{extencaosing}) is a proper
subset of the manifold $M$ it is natural to address the continuity of the reparameterization. In the special case that 
$\Lambda$ coincides with the whole manifold, an argument using the implicit function theorem guarantees that the reparameterization $A$ is $C^\infty$-smooth on regular points. Indeed, assume up to a change of coordinates that $W\subset \mathbb{R}^{dim(M)}$ and that both $\varphi=(\varphi_t)_t$ and $\psi \in \mathcal{Z}^{\infty}(\varphi)$ are flows in $\mathbb{R}^{dim(M)}$. 
Consider $t\ne 0$ and, for $c\in \mathbb R$ and $z\in W$, set
$F_t(c, \, z) = \varphi_{c t} (z) - \psi_t ( z)$. Since $F_t$ is $C^\infty$-smooth, $F_t( A(z),  z) = 0$ for every $z\in W$ 
and $\dfrac{\partial F_t}{\partial c}(c,x) = t X(\varphi_{c t} (x)) \neq 0$ (because $x$ is a regular point), the implicit function theorem \cite{Pugh} 
assures that $A(\cdot)$ 
is a $C^\infty$-function in $W$.
\end{remark}

\subsection{Proof of Corollary~\ref{cor:centr}}

Let $\varphi$ be a $C^\infty$-expansive flow on a compact and connected Riemannian manifold $M$ whose singularities are hyperbolic and non-resonant and suppose without loss of generality that 
$\Lambda=\overline{\bigcup\limits_{\sigma \in \text{Sing}(\varphi)} W^s(\sigma) }$ (the other case is completely similar).
If $\psi \in \mathcal{Z}^{\infty}(\varphi)$, Theorem \ref{thm:flows} guarantees that there exists a continuous
$A: M \rightarrow \mathbb{R}$ such that $\psi_t(x) = \varphi_{A(x) t} ( x)$. Moreover, the arguments used in the proof of Lemma \ref{extencaosing} yield that this reparameterization $A$ is constant in each $\overline{W^s(\sigma_i)}$.
Since there are finitely many singularities the image of $A$ is constituted only by a finite number of elements.
Now, using that $M$ is connected and that $A$ is continuous we conclude that $A$ is constant. Consequently, there exist $c \in \mathbb{R}$ so that $\psi_t(x) = \varphi_{c t}(x)$, for every $t \in \mathbb{R}$ and $x \in M$. This proves the corollary.
\hfill $\square$

\section{Expansive $\mathbb R^d$-actions and suspensions}\label{sec:actions1}

In what follows we provide a characterization of expansive $\mathbb R^d$-actions obtained as suspensions
of $\mathbb Z^d$-actions.
Expansive subdynamics of $\mathbb Z^d$-actions on compact metric spaces have been
considered e.g. in \cite{BL,ELMW}. Here we deal with expansive $\mathbb{R}^d$-actions
and this first result is an extension of \cite[Theorem 6]{BW72}, where
Bowen and Walters proved that a continuous  $\mathbb{Z}$-action is expansive if and only if its suspension flow is 
C-expansive. We first recall the notion of suspension action.

\subsection{Suspension of $\mathbb Z^d$-actions}
We first recall the notion of an $\mathbb R^d$-action that is obtained as \emph{suspension of a $\mathbb Z^d$-action}.
Let $(e_i)_{i=1}^d$ be the canonical base on $\mathbb Z^d$. Given a $\mathbb{Z}^d$-action, $\varphi: \mathbb{Z}^d \times M \rightarrow M$ ($d \geq 2$) we construct $\mathbb R^d$-actions that are suspensions of $\varphi$.
Given a continuous roof function $R : M \rightarrow (0, \, \infty)^d$ with $R=(R_1, \dots, R_d)$ consider the set $M_R = (M \times \mathbb{R}_+^d) / \sim_R$ where $\sim_R$ is the equivalence relation 
\begin{center}
$(x, \, a_1, \, \cdots, \, a_{i-1}, \, R_i(x), \, a_{i+1}, \, \cdots, \,  a_d) \sim_F (f_i(x), \, a_1, \, \cdots, \, a_{i-1}, \, 0, \, a_{i+1}, \, \cdots, \,  a_d)$
\end{center}
for every $x\in M$, $0\le a_j \le R_j(x)$, where $f_i(x) = \varphi(e_i, \, x)$. Observe that 
$\varphi((n_1, \, \cdots, \, n_d), \, x) = f_1^{n_1} \circ \cdots \circ f_d^{n_d}(x)$ for every integers $n_i$ and $x\in M$.
The suspension $\mathbb R^d$-action of $\varphi: \mathbb{Z}^d \times M \rightarrow M$
with the roof function $R$ is the action $\Phi: \mathbb{R}^d \times M_R \rightarrow M_R$ defined by 
\begin{align*}
\Phi((t_1, \, \cdots, \, t_d), \, (x, \, a_1 , \, \cdots, \, a_d)) 
	& = \Phi(t_1 e_1 + \cdots +  t_d e_d, \, (x, \, a_1, \, \cdots, \, a_d)) \\
                        & = \Phi(t_1 e_1, \, \Phi(t_2 e_2, \, \cdots \Phi(t_d e_d, \, (x, \, a_1, \, \cdots, \, a_d)) ))
\end{align*}
where
$$\Phi(t \, e_i, \, (x, \, a_1, \dots, a_d)) = (f_i^n(x), \, a_1, \, \dots, \, a_{i-1}, \, a_i + t - \sum\limits_{j = 0}^{n-1} R_i(f_i^j(x)), \, a_{i+1}, \dots,  a_d)$$
and $n\in \mathbb Z$ is uniquely determined by $\sum_{j=0}^{n-1} R_i(f_i^j(x)) \le a_i + t < \sum_{j=0}^{n} R_i(f_i^j(x))$,
for every $x\in M$, $1\le i \le d$ and $0\le a_i \le R_i(x)$.
In this way, any $\mathbb Z^d$-action on a compact $n$-dimensional manifold $M$ determines a $\mathbb R^d$-action 
$\Phi$ on a $n+d$-dimensional compact manifold $M_{R}$.

The space $M_R$ is metrizable and we exhibit a pseudo-metric $d$ that is compatible with the natural topology on $M_R$
and is the analogous of  the Bowen-Walters metric for flows.
For the purposes of the upcoming Theorem~\ref{thm:suspensions}  it is enough to consider the roof function $R$ constant to one
and the corresponding space $M_1$. Let $\rho$ denote the metric on $M$.
Given $M \times \{(t_1, \, \cdots, \, t_d)\} \subset M_{{1}}$ 
and  $\sigma = (\sigma_1, \cdots, \, \sigma_d) \in \{0, \, 1\}^d$, consider the `horizontal' distance 
$\rho_h$ on $M \times \{(t_1, \, \cdots, \, t_d)\}$ defined by
\begin{align*}
& \rho_{h} ((x, \, t_1, \, \cdots, \, t_d), \, (y, \, t_1, \, \cdots, \, t_d)) \\
& \qquad= \!\!
\sum\limits_{\sigma \in \{0, \, 1\}^d}
\Big\{\prod\limits_{i = 1}^{d}\Big[\sigma_i \cdot t_i + (1 - \sigma_i) \cdot (1 - t_i) \Big]\Big\}
\rho(f_1^{\sigma_1} \circ \cdots \circ f_d^{\sigma_d}(x), \, f_1^{\sigma_1} \circ \cdots \circ f_d^{\sigma_d}(y)).
\end{align*}

It is not hard to show that
$$\sum\limits_{\sigma \in \{0, \, 1\}^d}\left\{\prod\limits_{i = 1}^{d}\Big[\sigma_i \cdot t_i + (1 - \sigma_i) \cdot (1 - t_i) \Big]\right\} = 1$$
and, consequently, defined in this way, $\rho_{h}((x_1, \, t_1, \, \cdots, \, t_d), \, (x_2, \, t_1, \, \cdots, \, t_d))$ consists
of the convex combination of the distances between the images of the points $x,y$ and their iterates 
by maps of the form $f_1^{\sigma_1} \circ \cdots \circ f_d^{\sigma_d}$. 
Second, in the particular case that $d = 1$, $\rho_{h}((x, \, t), \, (y, \, t)) = (1 - t)\rho(x, \, y) + t \rho(f(x), \, f(y))$ coincides with the metric introduced in \cite{BW72} for suspension flows.
Given any two points $(x, \, t_1, \, \cdots, \, t_d), \, (y, \, s_1, \, \cdots, \, s_d) \in M_1$ consider the space 
of all the finite (admissible) sequences 
$\omega_1=(x_1, \, t_1, \, \cdots, \, t_d),  \, \cdots, \, \omega_n = (x_n, \, s_1, \, \cdots, \, s_d)$ such that $x_1=x$, $x_n=y$ and, for each $1 \leq i \leq n-1$, either
\begin{enumerate}
\item $\omega_i, \, \omega_{i+1} \in M \times \{(t_1, \, \cdots, \, t_d)\}$ for some $(t_1, \, \cdots, \, t_d) \in [0, \, 1)^d$, in which case we set ${\tilde d}(\omega_i, \, \omega_{i+1}) = \rho_{h}(\omega_i, \, \omega_{i+1})$; or
\item $\omega_i, \, \omega_{i+1}$ belong to the same orbit by the action $\Phi$, and we define
${\tilde d}(\omega_i, \, \omega_{i+1}) :=\inf\{ \|v\| \colon \Phi_v ({\omega_i)= \omega_{i+1}} \}$ 
as the `vertical distance' between $\omega_i$ and $\omega_{i+1}$ in $M_1$.  

\end{enumerate}
Finally, consider the metric in $M_1$ given by 
$$
d((x, \, t_1, \, \cdots, \, t_d), \, (y, \, s_1, \, \cdots, \, s_d)) = \inf \sum\limits_{i = 1}^{n-1} {\tilde d}(\omega_i, \, \omega_{i+1}),
$$ 
where the infimum is taken over the space of previously defined admissible sequences
between $(x, \, t_1, \, \cdots, \, t_d)$ and $(y, \, s_1, \, \cdots, \, s_d)$. 

\subsection{Characterization of expansive $\mathbb R^d$-actions that are suspensions}
This section is devoted to the proof of the following characterization.

\begin{theorem} \label{thm:suspensions}
Let $M$ be a compact Riemannian manifold and $1 : M \rightarrow (0, \, \infty)^d$ be the roof function constant to one.
 A bi-Lipschitz  $\mathbb Z^d$-action $\varphi: \mathbb{Z}^d \times M \rightarrow M$ is expansive if and only if its suspension $\mathbb R^d$-action $\Phi: \mathbb R^d \times M_1 \to M_1$ is expansive.
\end{theorem}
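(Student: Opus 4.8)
The plan is to prove the two implications separately, following the strategy of Bowen and Walters \cite[Theorem~6]{BW72} for the case $d=1$, but replacing the time-reparameterization $h\colon\mathbb{R}\to\mathbb{R}$ by a continuous reparameterization $h\colon\mathbb{R}^d\to\mathbb{R}^d$ with $h(\mathbf 0)=\mathbf 0$. The central structural fact I would exploit is that, since the roof function is constant equal to one, the $\mathbb{R}^d$-action restricted to the integer lattice behaves like the $\mathbb{Z}^d$-action: for every $n=(n_1,\dots,n_d)\in\mathbb{Z}^d$ and every $(x,\mathbf t)\in M_1$ one has $\Phi_n(x,\mathbf t)=(f^n(x),\mathbf t)$, where $f^n:=f_1^{n_1}\circ\cdots\circ f_d^{n_d}$, so the orbit of $(x,\mathbf t)$ meets the translated sections $M\times\{\mathbf t\}$ exactly at the base points $f^n(x)$. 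I would first record the elementary metric estimates relating $d$ and $\rho$: from the convex-combination formula for $\rho_{h}$ one gets $\rho_{h}((x,\mathbf 0),(y,\mathbf 0))=\rho(x,y)$ on the zero section, and, using the bi-Lipschitz hypothesis on $\varphi$, the metric $d$ is comparable to $\rho$ plus the fiber displacement on each cell $M\times[0,1)^d$, with constants uniform over cells. Together with the lower bound $1$ on the roof, these estimates fix an ``injectivity scale'' $\delta_0>0$ below which $d$-proximity of two points forces both their fiber coordinates and, when near a common section, their base points to be $\rho$-close.

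The implication ``$\Phi$ expansive $\Rightarrow\varphi$ expansive'' is the easier one, and I would carry it out first. Assuming $\Phi$ is expansive, fix $\varepsilon<1/2$ and let $\delta>0$ be the constant from Definition~\ref{acaoexpansivacontinua}. Given $x,y\in M$ with $\rho(f^n(x),f^n(y))\le\delta'$ for every $n\in\mathbb{Z}^d$, where $\delta'$ is chosen via the metric estimates, embed the points as $(x,\mathbf 0),(y,\mathbf 0)\in M_1$ and take $h=\mathrm{id}$. Using $\Phi_n(x,\mathbf 0)=(f^n(x),\mathbf 0)$ and interpolating over non-integer $v$ by the Lipschitz estimates, one checks $d(\Phi_v(x,\mathbf 0),\Phi_v(y,\mathbf 0))<\delta$ for all $v\in\mathbb{R}^d$. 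Expansiveness of $\Phi$ then yields $(y,\mathbf 0)=\Phi_{v_0}(x,\mathbf 0)$ with $\|v_0\|<\varepsilon<1/2$; since $\Phi_{v_0}(x,\mathbf 0)$ can lie on the zero section for such a small $v_0$ only when $v_0=\mathbf 0$ (otherwise some fiber coordinate is a nonzero number in $(-1/2,1/2)$ modulo $1$), we conclude $v_0=\mathbf 0$ and $x=y$, so $\varphi$ is expansive.

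For the converse, assume $\varphi$ is expansive with constant $c>0$. Given $\varepsilon>0$ I would choose $\delta\in(0,\delta_0)$ small relative to $c$ and to $1$, and suppose $\xi=(x,\mathbf t)$, $\eta=(y,\mathbf s)$ satisfy $d(\Phi_v\xi,\Phi_{h(v)}\eta)<\delta$ for all $v$ and some continuous $h$ with $h(\mathbf 0)=\mathbf 0$. Evaluating at $v=n\in\mathbb{Z}^d$ gives $\Phi_n\xi=(f^n(x),\mathbf t)$, and the injectivity-scale estimates force $\Phi_{h(n)}\eta$ into a single cell near this point, determining an integer vector $m(n)\in\mathbb{Z}^d$ with $\rho(f^n(x),f^{m(n)}(y))$ small and fibers close. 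The key remaining step is the rigidity claim that $m(n)=n+n_0$ for a fixed $n_0\in\mathbb{Z}^d$: continuity of $h$ forces the cell-assignment $v\mapsto\lfloor\text{height of }\Phi_{h(v)}\eta\rfloor$ to change only by crossing walls, so $n\mapsto m(n)-n$ is locally constant, hence constant by connectedness of $\mathbb{R}^d$ and $h(\mathbf 0)=\mathbf 0$. Granting this, $\rho(f^n(x),f^n(f^{n_0}(y)))<c$ for all $n\in\mathbb{Z}^d$, so expansiveness of $\varphi$ gives $x=f^{n_0}(y)$; thus $\xi$ and $\eta$ lie over the same base orbit, and combining with the fiber-closeness produces $v_0$ with $\eta=\Phi_{v_0}\xi$ and $\|v_0\|<\varepsilon$.

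I expect the rigidity claim $m(n)-n\equiv n_0$ to be the main obstacle, since in the $\mathbb{R}^d$ setting the reparameterization $h$ may distort the lattice of return times in each of the $d$ directions independently; controlling this requires simultaneously using the continuity of $h$, the commutativity of the maps $f_i$ (so that $f^n$ is well defined independently of the order in which walls are crossed), and the uniform lower bound $1$ on the roof to prevent two distinct return points from being confused at scale $\delta$. The bi-Lipschitz hypothesis on $\varphi$ enters precisely to keep all comparison constants between $d$ and $\rho$ uniform across the infinitely many cells, so that a single $\delta$ controls the entire orbit.
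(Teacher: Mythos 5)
Your proposal is correct and follows essentially the same route as the paper: the forward implication embeds the base points in a section of $M_1$ and applies expansiveness of $\Phi$ with $h=\mathrm{id}$, while the converse evaluates the orbit-closeness at lattice times and invokes expansiveness of the $\mathbb Z^d$-action. The only substantive difference lies in the converse's cell-tracking step: you prove the synchronization $m(n)=n+n_0$ explicitly via continuity of $h$ and connectedness of $\mathbb{R}^d$, whereas the paper absorbs this by normalizing $y_1$ to fiber height $(1/2,\dots,1/2)$, taking $\delta<1/4$, and measuring base distances with the auxiliary metric $\tilde\rho(x,y)=\min_{\sigma\in\{0,1\}^d}\rho(f^\sigma x,\,f^\sigma y)$ (for which the bi-Lipschitz hypothesis preserves expansiveness) --- your version is, if anything, more explicit about the step the paper dispatches with ``proceeding recursively''.
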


\begin{proof}
Suppose that the action $(\Phi_v)_{v \in \mathbb{R}^d}$ is expansive (cf. Definition \ref{acaoexpansivacontinua}). So, given 
$0 < \varepsilon < \frac{1}{2}$ let $\delta > 0$ be so that if $x, \, y \in M$ satisfy $d(\Phi_v(x), \, \Phi_{h(v)}(y)) < \delta$ for every $v \in \mathbb{R}^d$ with respect to a continuous function $h: \mathbb{R}^d \rightarrow \mathbb{R}^d$ so that $h(0) = 0$,  then
$y = \Phi_{v_0}(x)$ for some $\|v_0\| < \varepsilon$. 
We show that the $\mathbb Z^d$-action $\varphi$ is expansive. Assume that $x_1, \, x_2 \in M$ are such that 
$\rho(\varphi_{(n_1, \, \cdots, \, n_d)}(x_1), \, \varphi_{(n_1, \, \cdots, \, n_d)}(x_2)) < \delta$ for all $(n_1, \, \cdots, \, n_d) \in \mathbb{Z}^d$.
If $[t]$ denotes the integer part of $t$ and $\{t_1\} = t - [t]$ is the fractional part of $t$, for every $t\in \mathbb R$, observe that
\begin{align*}
d(\Phi_{(t_1, \, \cdots, \, t_d)} & (x_1, \, 0, \, \cdots, \, 0), \, \Phi_{(t_1, \, \cdots, \, t_d)}(x_2, \, 0, \, \cdots, \, 0))  \\
&= d(\Phi_{([t_1], \, \cdots, \, [t_d])}(x_1, \, \{t_1\}, \, \cdots, \, \{t_d\}), \, \Phi_{([t_1], \, \cdots, \, [t_d])}(x_2, \, \{t_1\}, \, \cdots, \, \{t_d\})) \\
&\leq \rho_{h} (\Phi_{([t_1], \, \cdots, \, [t_d])}(x_1, \, \{t_1\}, \, \cdots, \, \{t_d\}), \\
& \qquad \qquad \qquad \qquad \Phi_{([t_1], \, \cdots, \, [t_d])}(x_2, \, \{t_1\}, \, \cdots, \, \{t_d\})) \\
&= \rho_{h}((f_1^{[t_1]} \circ \cdots \circ f_d^{[t_d]}(x_1), \, \{t_1\}, \, \cdots, \, \{t_d\}), \\
& \qquad \qquad \qquad \qquad  (f_1^{[t_1]} \circ \cdots \circ f_d^{[t_d]}(x_2), \, \{t_1\}, \, \cdots, \, \{t_d\})) \\
&=  \sum\limits_{\sigma \in \{0, \, 1\}^d}\left(\prod\limits_{i = 1}^{d} \sigma_i \cdot \{t_i\} + (1 - \sigma_i) \cdot (1 - \{t_i\}) \right) \\
& \qquad \qquad \qquad \qquad \cdot \rho(f_1^{[t_1]+\sigma_1} \circ \cdots \circ f_d^{[t_d]+\sigma_d}(x_1), \, f_1^{[t_1]+\sigma_1} \circ \cdots \circ f_d^{[t_d]+\sigma_d}(x_2))
\\
& < \sum\limits_{\sigma \in \{0, \, 1\}^d}\left(\prod\limits_{i = 1}^{d} \sigma_i \cdot \{t_i\} + (1 - \sigma_i) \cdot (1 - \{t_i\}) \right) \cdot \delta = \delta
\end{align*}
for every $(t_1, \dots, t_d) \in \mathbb R^d$.
The expansiveness condition assures that $(x_2, \, 0, \, \cdots, \, 0) = \Phi_{v_0}(x_1, \, 0, \, \cdots, \, 0)$ for some $v_0 \in \mathbb{R}^d$ such that $\|v_0\| < \varepsilon < 1/2$. This implies that $x_1 = x_2$ and so the action $\varphi: \mathbb{Z}^d \times M \rightarrow M$ is expansive.

\medskip

Conversely, suppose that $\varphi$ is expansive. 
In particular $\varphi$ is also expansive with respect to the distance
$${\tilde \rho}(x_1, \, x_2) = \min\limits_{\sigma \in \{0, \, 1\}^d}\{\rho(f_1^{\sigma_1} \circ \cdots \circ f_d^{\sigma_d}(x_1), \, f_1^{\sigma_1} \circ \cdots \circ f_d^{\sigma_d}(x_2))\}$$ and let $\zeta>0$ be such a constant of expansiveness. Indeed,
since $\varphi$ is bi-Lipschitz then there exists $C>0$ so that $\frac{1}{C}\tilde \rho(x , y) \leq \rho(x , y) \leq C \tilde \rho(x , y)$ for all $x,y\in M$.
Now, given $\varepsilon > 0$ take $0<\delta < \min\{\varepsilon,\frac14, \zeta\}$. 
Suppose that $d(\Phi_v(x_1, \, t_1, \, \cdots, \, t_d), \, \Phi_{h(v)}(x_2, \, s_1, \, \cdots, \, s_d)) < \delta$ for all $v \in \mathbb{R}^d$ and for some continuous map $h: \mathbb{R}^d \rightarrow \mathbb{R}^d$ such that $h(0) = 0$.
We may assume without loss of generality that $y_1 = (x_1, 1/2, \, \cdots, \, 1/2)$ and $y_2 = (x_2, \, s_1, \, \cdots, \, s_d)$ in the coordinates of $M \times [0, \, 1]^d$ (if $y_1$ is not is in the form $(x_1, 1/2, \, \cdots, \, 1/2)$ just take 
$\|w\| \leq 1/2$ such that $\Phi_w(y_1) = (x_1, 1/2, \, \cdots, \, 1/2)$ and consider the points $\Phi_w(y_1)$ and 
$\Phi_{h(w)}(y_2)$).
Observe that 
$$
{\tilde \rho}(x_1, \, x_2) \leq d(y_1, \, y_2) = d(\Phi_0(y_1), \, \Phi_{h(0)}(y_2)) < \delta < 1/4.
$$
Now, suppose that $d(\Phi_v(y_1), \, \Phi_{h(v)}(y_2)) < \delta < 1/4$  for all $v \in \mathbb{R}^d$. In particular, taking $v = e_i$ ($1 \leq i \leq d$) 
it holds that
${\tilde \rho}(f_i^n(x_1), \, f_i^n(x_2)) \leq d(\Phi_{n \cdot e_i}(y_1), \, \Phi_{h(n \cdot e_i)}(y_2)) < \delta < 1/4$
for every $n \in \mathbb{Z}$.
Proceeding recursively, we obtain that 
\begin{align*}
{\tilde \rho}((f_1^{n_1} \circ \cdots \circ f_d^{n_d})(x_1), & \, (f_1^{n_1} \circ \cdots \circ f_d^{n_d})(x_2)) \\
	& \leq d(\Phi_{(n_1, \, \cdots, \, n_d)}({y_1}), \, \Phi_{h(n_1, \, \cdots, \, n_d)}({y_2})) < \delta
\end{align*} 
for all $(n_1, \, \cdots, \, n_d) \in \mathbb{Z}^d$.
Finally, by the expansiveness of $\varphi$ we obtain that $x_1 = x_2$, implying $y_2 = \Phi_v(y_1)$ for some $v \in \mathbb{R}^d$ such that $\|v\| < \delta < \varepsilon$.

\end{proof}

\section{Centralizer for expansive homogenenous $\mathbb R^d$-actions}\label{sec:homogeneous}\label{sec:actions2}

In this section we prove that  $\mathbb R^d$-actions with expansive elements have one dimensional orbits (Corollary~\ref{cor:actions}) and study the centralizer of homogeneous expansive $\mathbb R^d$-actions (Theorem~\ref{centacaoespansiva}). In what follows, $\|\, \cdot \,\|$ will denote the Euclidean norm in $\mathbb{R}^d$.

\subsection{Proof of Corollary~\ref{cor:actions}}

Let $M$ be a compact Riemannian manifold and let $\Phi: \mathbb{R}^d \times M \rightarrow M$ be a continuous action in $M$ such that the flow $(\Phi_{t \, v})_{t \in \mathbb{R}}$ is Komuro-expansive for a fixed $v \in \mathbb{R}^d$. Consider $\{v, \, u_2, \, u_3, \, \cdots, \, u_d \}$ a basis of $\mathbb{R}^d$ containing the vector $v$.

Observe that $\Phi_{t \, v} \circ \Phi_{s \, u_i} = \Phi_{s \, u_i} \circ \Phi_{t \, v}$ for all $t, \, s \in \mathbb{R}$ and $2 \leq i \leq d$. So, by expansiveness of the flow $(\Phi_{t \, v})_{t \in \mathbb{R}}$, as a consequence of Theorem \ref{thm:flows} for each $2 \leq i \leq d$ there exists a unique function $A_i : X \rightarrow \mathbb{R}$ invariant along orbits of flow $(\Phi_{t \, v})_{t \in \mathbb{R}}$ and such that $\Phi_{u_i}(t, \, x) = \Phi_{v}(A_i(x)t, \, x)$. Consequently,
$$\Phi(t_1 v + t_2 u_2 + \cdots + t_d u_d, x) = \Phi_{v}((1 + A_2(x) + \cdots + A_d(x))t , \, x),$$
which proves that all regular orbits of $\Phi$ are unidimensional.
This completes the proof of the corollary.

\subsection{Proof of Theorem~\ref{centacaoespansiva}}

In this subsection we characterize the space of $C^1$ $\mathbb{R}^d$-actions $\Psi$ that commute with 
an expansive  $C^1$ $\mathbb{R}^d$-action $\Phi$. Our purpose is to prove that the action $\Psi$ is a reparameterization of $\Phi$: there exists a continuous map $A: \mathbb T^n \rightarrow \mathcal M_{d\times d}(\mathbb{R})$ satisfying: (i) $A(x) = A(\Phi_v(x))$ for every $v\in\mathbb R^d$ and $x\in \mathbb T^n$, and (ii) $\Psi_v(x) = \Phi(A(x) v, \, x)$  for every $(v, \, x) \in \mathbb{R}^d \times \mathbb T^n$ 
(cf. Proposition~\ref{Acomblinear} below).
Since the strategy of the proof is similar to the one of Theorem~\ref{thm:flows} we will sketch the details and highlight
the main differences. The starting point is the following canonical form for commuting vector fields, similar to the tubular neighborhood 
theorem.

\begin{lemma}[Lee \cite{Lee}, Theorem 18.6]\label{acaotubular} Let $M$ be a smooth $n$-manifold, let $d<n$ and let $\Phi$ be a 
$C^1$ $\mathbb{R}^d$-action on $M$. Assume that $\Phi$ is generated by smooth commuting linearly independent vector fields $X_1, \, \cdots, \, X_d$ on some open subset $W \subseteq M$. For each $p \in W$ there exists an open neighborhood $U$ of $p$, 
a $C^1$-diffeomorphism $h~:~U~\rightarrow h(U) \subset \mathbb{R}^n$  with coordinate functions $h(q) = (s_1(q), \, \cdots, \, s_n(q))$ on $U$ and $h(p) = 0$ and such that $X_i = h^{-1}_* \frac{\partial}{\partial s_i}$ for $i = 1, \cdots, \, d$. 
 If $S \subseteq U$ is an embedded codimension-$d$ submanifold and $q$ is a point of $S$ such that $T_qS$ is complementary to  
 $\text{span}(X_1(q), \, \cdots, \, X_d(q))$ then the coordinates can be chosen such that $S$ is  defined by the coordinates $s_1 = \cdots = s_d = 0$.
\end{lemma}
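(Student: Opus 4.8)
The plan is to construct a simultaneous flow-box for the $d$ commuting fields, imitating the proof of the classical tubular neighborhood (flow-box) theorem for a single vector field, but exploiting the commutativity to straighten all $d$ fields at once. The only place the commuting hypothesis enters is the following observation, which I would record first: since $\Phi$ is an $\mathbb{R}^d$-action, $\Phi_{v+u} = \Phi_v \circ \Phi_u$, so writing $\phi^i_t := \Phi_{t e_i}$ for the flow generated by $X_i$ we have $\phi^i_t \circ \phi^j_s = \phi^j_s \circ \phi^i_t$ wherever defined, and $\Phi_{t_1 e_1 + \cdots + t_d e_d} = \phi^1_{t_1} \circ \cdots \circ \phi^d_{t_d}$.

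Next I would fix $p \in W$ and choose an embedded codimension-$d$ submanifold $S_0 \ni p$ whose tangent space $T_p S_0$ is complementary to $\mathrm{span}(X_1(p), \, \cdots, \, X_d(p))$, parametrized by a (at least $C^1$) embedding $\iota : V \to S_0$ with $V \subseteq \mathbb{R}^{n-d}$ open and $\iota(0) = p$. Define, on a neighborhood of $(0,0) \in \mathbb{R}^d \times V$, the map
$$\Theta(t_1, \, \cdots, \, t_d, \, y) = \Phi_{t_1 e_1 + \cdots + t_d e_d}(\iota(y)).$$
I would then compute its differential at the origin: $\partial_{t_i}\Theta(0,0) = X_i(p)$ while $\partial_{y_j}\Theta(0,0) = d\iota_0(e_j) \in T_p S_0$, so the complementarity makes $d\Theta_{(0,0)}$ a linear isomorphism of $T_p M$, and the inverse function theorem yields a $C^1$-diffeomorphism of a neighborhood of $(0,0)$ onto a neighborhood $U$ of $p$. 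Setting $h = \Theta^{-1}$ with coordinate functions $(s_1, \, \cdots, \, s_n)$ gives $h(p) = 0$, and the regularity of $h$ is inherited from that of the flows of the $X_i$ and of $\iota$.

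The crux is to upgrade the identity $\partial_{t_i}\Theta = X_i$ from the single point $(0,0)$ to the whole domain, and this is exactly where commutativity is indispensable: using the reordering $\Phi_{t + r e_i} = \phi^i_r \circ \Phi_t$ I would obtain $\partial_{t_i}\Theta(t, y) = X_i(\Theta(t, y))$ identically, which says precisely $h_* X_i = \partial/\partial s_i$, equivalently $X_i = h^{-1}_* \, \partial/\partial s_i$, for each $1 \le i \le d$. This proves the first assertion. For the refinement, I would run the same construction taking the transversal to be the given submanifold $S$ itself (parametrized so that $\iota(0)$ is the distinguished point of $S$ at which the complementarity holds), so that $\Theta(\{0\} \times V) = S$ and hence, on the resulting chart, $S$ is cut out exactly by $s_1 = \cdots = s_d = 0$. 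I expect the main obstacle to be the verification of $\partial_{t_i}\Theta = X_i \circ \Theta$ globally, together with the transversality bookkeeping needed to guarantee that $d\Theta_{(0,0)}$ is invertible; both are routine once the commutation relations above are in hand, but they are the heart of why the $d > 1$ case requires the commuting hypothesis rather than only a pointwise straightening.
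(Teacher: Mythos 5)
Your proof is correct and is essentially the standard argument from the cited reference (Lee, \emph{Introduction to Smooth Manifolds}): the paper itself offers no proof of this lemma, only the citation, and your construction of the simultaneous flow-box $\Theta(t,y)=\Phi_{t_1e_1+\cdots+t_de_d}(\iota(y))$ over a transversal slice, followed by the inverse function theorem and the group-law computation $\partial_{t_i}\Theta=X_i\circ\Theta$, is exactly how that theorem is proved. The only point worth noting is that in your setting the commutation of the flows is automatic from the $\mathbb{R}^d$-action axiom $\Phi_{u+v}=\Phi_u\circ\Phi_v$, so the step you flag as the crux is even more immediate than in Lee's formulation, where one starts from commuting vector fields and must first establish that their local flows commute.
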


The next lemma asserts that the leaves formed by the orbits of expansive $\mathbb R^d$-actions do not admit closed curves of arbitrarily small diameter.  Since the proof of the lemma is completely similar to the one of Lemma~\ref{e0}, making use of Lemma~\ref{acaotubular}, we shall omit it.

\begin{lemma}\label{e0acao} If $\Phi: \mathbb{R}^d \times \mathbb T^n \rightarrow \mathbb T^n$ is an $C^1$ expansive homogeneous $\mathbb{R}^d$-action, then
\begin{center}
$\varepsilon_0(\Phi) = \inf\{\|v\| > 0: \; v$ is period of $\Phi\} > 0$.
\end{center}
\end{lemma}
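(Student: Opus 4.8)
The plan is to mirror the proof of Lemma~\ref{e0}, observing that the role previously played by the finitely many hyperbolic singularities is now vacuous: since $\Phi$ is homogeneous, its generating vector fields $X_1, \dots, X_d$ are everywhere linearly independent, so the action is locally free and every orbit is a $d$-dimensional immersed submanifold with no degeneracies to treat separately. In this setting a period is a nonzero vector $v \in \mathbb R^d$ for which $\Phi_v(x) = x$ for some $x \in \mathbb T^n$, equivalently a nonzero element of the (necessarily discrete) stabilizer of some point. I would argue by contradiction: if $\varepsilon_0(\Phi) = 0$, then there are points $x_k$ and periods $v_k$ with $\Phi_{v_k}(x_k) = x_k$ and $\|v_k\| \to 0$, and I aim to rule this out.

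The key geometric input is Lemma~\ref{acaotubular}: at each $p \in \mathbb T^n$ it furnishes a neighborhood $U_p$ and a $C^1$-chart $h_p$ in which $X_i = (h_p)^{-1}_* \frac{\partial}{\partial s_i}$ for $1 \le i \le d$. Consequently, inside $U_p$ and in these coordinates, the map $\Phi_v$ acts (as long as the orbit segment remains in the box) as the translation $(s_1, \dots, s_n) \mapsto (s_1 + v_1, \dots, s_d + v_d, s_{d+1}, \dots, s_n)$ along the first $d$ coordinates, because the commuting flows of the $X_i$ are each conjugated by $h_p$ to coordinate translations. In particular, if an entire orbit segment $\{\Phi_{tv}(q) : t \in [0, \, 1]\}$ stays inside a single flowbox $U_p$, then $\Phi_v(q) = q$ forces $v = 0$, since a nonzero translation in the first $d$ coordinates fixes no point. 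By compactness of $\mathbb T^n$ I would extract a finite subcover $(U_{p_j})_{j=1}^{\kappa}$ and let $\rho > 0$ denote a Lebesgue number of this cover.

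It remains to control how far an orbit segment can travel. Since $X_1, \dots, X_d$ are continuous on the compact manifold $\mathbb T^n$, there is a constant $C > 0$ with $\|\frac{d}{dt}\Phi_{tv}(x)\| = \|\sum_{i=1}^d v_i \, X_i(\Phi_{tv}(x))\| \le C\|v\|$ for all $x$ and $t$, so the orbit segment $\{\Phi_{tv}(x) : t \in [0, \, 1]\}$ has diameter at most $C\|v\|$. Thus, as soon as $\|v\| < \rho/C$, this segment has diameter smaller than $\rho$ and is therefore contained in some flowbox $U_{p_j}$. By the preceding paragraph, a period $v$ with $\|v\| < \rho/C$ must vanish, contradicting $v \neq 0$. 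Hence every nonzero period satisfies $\|v\| \ge \rho/C$, whence $\varepsilon_0(\Phi) \ge \rho/C > 0$.

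I expect the only mild obstacle to be ensuring that the whole orbit loop, and not merely its endpoints, lies inside a single flowbox; this is exactly what the speed bound combined with the Lebesgue number delivers, after which the flatness of the action in the canonical coordinates of Lemma~\ref{acaotubular} makes the conclusion immediate. I note, finally, that as in Lemma~\ref{e0} expansiveness is not actually used in this statement: homogeneity (the analogue of the absence of singular orbits) together with compactness suffices.
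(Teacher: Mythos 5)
Your proof is correct and follows exactly the route the paper intends: it omits the argument, stating only that it is ``completely similar to Lemma~\ref{e0}, making use of Lemma~\ref{acaotubular}'', which is precisely your combination of the straightening chart for the commuting vector fields with a finite flowbox cover. In fact you supply the detail the paper leaves implicit in Lemma~\ref{e0} as well --- the Lebesgue number together with the speed bound $\|\frac{d}{dt}\Phi_{tv}\|\le C\|v\|$ forcing a short periodic loop into a single flowbox --- and your observations that homogeneity makes the singular case vacuous and that expansiveness is not needed are both accurate.
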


We should observe that if an $\mathbb R^d$-action is not homogeneous then $\varepsilon_0(\Phi)$ could be zero
and the (local) geometry of the space of orbits in the case of non-homogeneous actions can be very complicated.

\begin{lemma}\label{singepsilon} If $\Phi: \mathbb{R}^d \times \mathbb T^n \rightarrow \mathbb T^n$ is an expansive $C^1$-action and $\Psi \in \mathcal{Z}^{1}(\Phi)$ then, for all $0 < \varepsilon < \varepsilon_0(\Phi)/3$, there exists $\mu > 0$ and a unique map $z :\, \overline{B_{\mu}(0)} \times \mathbb T^n \rightarrow B_{\varepsilon}(0) \subset \mathbb{R}^d$ such that $\Psi_s(x) = \Phi(z(s, \, x), \,x)$ for all $(s, \, x) \in \overline{B_{\mu}(0)} \times \mathbb T^n$.
Moreover,
\begin{itemize}
\item[(I)\; ] $z$ is a continuous map,
\item[(II) ] If $v$, $u$, $u + v \in \overline{B_{\mu}(0)}$, then $z(u + v, \, x) = z(u, \, x) + z(v, \, \Psi(u, \, x))$.
\end{itemize}
\end{lemma}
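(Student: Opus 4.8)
The plan is to mimic, step by step, the proof of Lemma~\ref{singepsilon1}, replacing the scalar time by a vector $v\in\mathbb R^d$ and the one-parameter flow by the $\mathbb R^d$-action, and using Lemma~\ref{e0acao} in place of Lemma~\ref{e0}. First I would fix $0<\varepsilon<\varepsilon_0(\Phi)/3$ and let $\delta>0$ be the expansiveness constant associated to $\varepsilon$ in Definition~\ref{acaoexpansivacontinua}. Since $\mathbb T^n$ is compact and $\Psi_0=\mathrm{Id}$, the continuity of $\Psi$ at the origin lets me choose $\mu>0$ so that $\sup_{x\in\mathbb T^n}d(x,\Psi_s(x))<\delta$ for every $\|s\|\le\mu$. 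For such $s$ and any $x\in\mathbb T^n$, commutativity of $\Psi$ with $\Phi$ gives $\Phi_v(\Psi_s(x))=\Psi_s(\Phi_v(x))$, whence $d(\Phi_v(x),\Phi_v(\Psi_s(x)))=d(y,\Psi_s(y))<\delta$ with $y=\Phi_v(x)$, for all $v\in\mathbb R^d$. Taking $h=\mathrm{Id}$ in Definition~\ref{acaoexpansivacontinua}, expansiveness yields $v_0\in\mathbb R^d$ with $\|v_0\|<\varepsilon$ and $\Psi_s(x)=\Phi_{v_0}(x)$, and I set $z(s,x):=v_0$. Uniqueness is where Lemma~\ref{e0acao} enters: if $\Phi_{v_0}(x)=\Phi_{v_1}(x)$ with $\|v_0\|,\|v_1\|<\varepsilon$, then $\Phi_{v_1-v_0}(x)=x$ with $0\le\|v_1-v_0\|<2\varepsilon_0(\Phi)/3<\varepsilon_0(\Phi)$, forcing $v_1=v_0$.

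For the continuity (I) I would argue by contradiction exactly as in Lemma~\ref{singepsilon1}. Assuming $z$ fails to be continuous at $(s,x)$, I pick $(s_n,x_n)\to(s,x)$ with $\|z(s_n,x_n)-z(s,x)\|\ge\delta_0>0$. Since $z$ takes values in the bounded set $B_\varepsilon(0)$, after passing to a subsequence I may assume $z(s_n,x_n)\to w_0$, so $\|w_0-z(s,x)\|\ge\delta_0$ with both vectors in $\overline{B_\varepsilon(0)}$ and hence $\|w_0-z(s,x)\|<2\varepsilon<\varepsilon_0(\Phi)$. The local injectivity of $v\mapsto\Phi_v(x)$ on sets of diameter $<\varepsilon_0(\Phi)$ --- again a consequence of Lemma~\ref{e0acao} --- then gives $\Phi_{w_0}(x)\ne\Phi_{z(s,x)}(x)$, so $\delta_1:=\tfrac12 d(\Phi_{w_0}(x),\Phi_{z(s,x)}(x))>0$ and $d(\Phi_{z(s_n,x_n)}(x),\Phi_{z(s,x)}(x))\ge\delta_1$ for $n$ large. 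The triangle inequality
\[
\delta_1\le d(\Phi_{z(s_n,x_n)}(x),\Phi_{z(s_n,x_n)}(x_n))+d(\Psi_{s_n}(x_n),\Psi_s(x))
\]
then produces a contradiction, since the right-hand side tends to $0$ by joint continuity of $\Phi$ (with $x_n\to x$ and $z(s_n,x_n)\to w_0$) and of $\Psi$.

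Property (II) is the short algebraic step. Using $\Psi_{u+v}=\Psi_v\circ\Psi_u$ together with the defining relation $\Psi_w(\cdot)=\Phi_{z(w,\cdot)}(\cdot)$, I would compute
\[
\Phi_{z(u+v,x)}(x)=\Psi_{u+v}(x)=\Phi_{z(v,\Psi_u(x))}\bigl(\Phi_{z(u,x)}(x)\bigr)=\Phi_{z(u,x)+z(v,\Psi_u(x))}(x),
\]
and since the first exponent lies in $B_\varepsilon(0)$ and the second in $B_{2\varepsilon}(0)$, their difference has norm $<3\varepsilon<\varepsilon_0(\Phi)$, so the uniqueness afforded by Lemma~\ref{e0acao} forces $z(u+v,x)=z(u,x)+z(v,\Psi(u,x))$, which is exactly (II). The one genuinely new point compared with the flow case, and the step I expect to require the most care, is the passage from the scalar local injectivity used for flows to the vector-valued local injectivity of $v\mapsto\Phi_v(x)$ on small balls of $\mathbb R^d$; this is precisely what the homogeneity hypothesis buys us through Lemma~\ref{e0acao}, since for non-homogeneous actions $\varepsilon_0(\Phi)$ may vanish and neither the uniqueness nor the continuity of $z$ need hold.
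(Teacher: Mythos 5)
Your proposal is correct and follows essentially the same route as the paper: expansiveness applied with $h=\mathrm{Id}$ to the commuting relation yields the local reparameterization, uniqueness and local injectivity come from $\varepsilon_0(\Phi)>0$ via Lemma~\ref{e0acao}, and continuity and the cocycle property (II) are obtained exactly as in Lemma~\ref{singepsilon1}. You in fact write out the continuity and (II) arguments that the paper omits as ``completely analogous,'' and your closing remark about homogeneity being what guarantees $\varepsilon_0(\Phi)>0$ matches the paper's discussion.
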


\begin{proof} 
Although the proof is analogous to the one of Lemma~\ref{singepsilon1}, we include the construction of
the local reparameterization for completeness.
Let $\varepsilon_0 > 0$ be as in Lemma \ref{e0acao}, take $0 < \varepsilon < \varepsilon_0(\Phi)/3$ and let
$\delta>0$ be given by expansiveness (recall Definition \ref{acaoexpansivacontinua}).
By compactness of $\mathbb T^n$ there exists $\mu > 0$ such that if $0 < \varepsilon < \varepsilon_0/3$, then 
$$
\sup\limits_{\|u\| \leq \mu}\left\{d(Id, \, \Psi_u)\right\} < \delta.
$$ 
If $\|u\| \leq \mu$ then 
$
d(\Phi_v(x), \, \Phi_v(\Psi_u(x)))  
                                          = d(\Psi_0(\Phi_v(x)), \, \Psi_u(\Phi_v(x))) 
                                          < \delta
$
for all $(v, \,x) \in \mathbb{R}^d \times \mathbb T^n$. Since $\Phi$ is an expansive action and $d(\Phi_v(x), \, \Phi_{h(v)}(\Psi_u(x))) < \delta$ for all $v \in \mathbb{R}^d$ (with $h = Id$), there exists $v_0 \in \mathbb{R}^d$ such that $\Phi_{v_0}(\Psi_u(x)) = \Phi_{v_0 + \eta}(x)$ for some $\eta \in B_{\varepsilon}(0)$. This implies that $\Psi_u(x) = \Phi_{\eta}(x)$ for some
vector $\eta$ satisfying $\|\eta\| < \varepsilon$. In particular, $\Psi_u(x)$ belongs to the orbit of $x$  relative to the $\mathbb R^d$-action $\Phi$.

This defines a map $z :\, \overline{B_{\mu}(0)} \times \mathbb T^n \rightarrow B_{\varepsilon}(0)$ such that $\Psi(u, \, x) = \Phi(z(u, \, x), \, x)$ for any  $(u, \, x) \in \overline{B_{\mu}(0)} \times \mathbb T^n$. To prove the uniqueness, observe that if $z_1, \, z_2 :\, \overline{B_{\mu}(0)} \times \mathbb T^n \rightarrow B_{\varepsilon}(0)$ are such that $\Phi(z_1(u, \, x), \, x) = \Psi(u, \, x) = \Phi(z_2(u, \, x), \, x)$, then $\Phi(z_1(u, \, x) - z_2(u, \, x), \, x) = x$ where $\|z_1(u, \, x) - z_2(u, \, x)\| \leq \|z_1(u, \, x)\| + \|z_2(u, \, x)\| < 2 \, \varepsilon_0(\Phi)/3$. Since this contradicts the non existence of periods smaller than $\varepsilon_0(\Psi)$ the uniqueness of $z$ follows. 
The proof of the continuity is completely analogous to the one of Lemma~\ref{singepsilon1} and we shall omit it.
\end{proof}


Next, we will construct an extension to $\mathbb{R}^d \times \mathbb T^n$ for the continuous reparameterization  described in Lemma \ref{singepsilon}. More precisely  we have the following:

\begin{lemma}\label{lemaextensaoacao} If $\Phi: \mathbb{R}^d \times \mathbb T^n \rightarrow \mathbb T^n$ is a continuous action and $\Psi : \mathbb{R}^d \times \mathbb T^n \rightarrow \mathbb T^n$ is a continuous action such that for $\mu > 0$ fixed there exists a reparameterization $z :\, \overline{B_{\mu}(0)} \times \mathbb T^n \rightarrow B_{\varepsilon}(0)$ such that $\Psi(v, \, x) = \Phi(z(v, \, x), \, x)$ for any $(v, \, x) \in \overline{B_{\mu}(0)} \times \mathbb T^n$, when $0 < \varepsilon < \varepsilon_0(\Phi)/3$, then exists a unique continuous function $p :\, \mathbb{R}^d \times \mathbb T^n \rightarrow \mathbb{R}^d$ where is extension of $z$ and such that $\Psi(s, \, x) = \Phi(p(s, \, x), \, x)$ for all $(s, \, x) \in \mathbb{R}^d \times \mathbb T^n$.
\end{lemma}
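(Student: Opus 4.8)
The plan is to reproduce the scheme of Proposition~\ref{extencao}, exploiting the linear structure of $\mathbb R^d$ by extending the local reparameterization along rays emanating from the origin (that is, by rescaling) rather than along a dyadic subdivision of a line; note that in the homogeneous setting there are no singularities, so the construction is carried out on all of $\mathbb T^n$. The starting observation is that, since $\Psi$ is an $\mathbb R^d$-action, for any $s\in\mathbb R^d$ I may pick an integer $k$ with $\|s\|/k<\mu$ and write $s=k\,w$ with $w=s/k\in B_\mu(0)$. Guided by the cocycle identity (II) of Lemma~\ref{singepsilon} and the group law $\Psi_{u+v}=\Psi_u\circ\Psi_v$, I would define
\begin{equation*}
p(s,\,x)=\sum_{i=0}^{k-1} z\big(s/k,\,\Psi(i\,s/k,\,x)\big).
\end{equation*}
A one-line induction on $k$, using $\Psi(w,y)=\Phi(z(w,y),y)$ for $\|w\|\le\mu$ together with $\Phi(a,\Phi(b,x))=\Phi(a+b,x)$, then shows that this $p$ satisfies $\Psi(s,x)=\Phi(p(s,x),x)$; taking $k=1$ shows that $p$ restricts to $z$ on $\overline{B_\mu(0)}\times\mathbb T^n$, so $p$ extends $z$.

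The crux, and what I expect to be the main obstacle, is to prove that the formula above is \emph{independent of the choice of} $k$, which is precisely what makes $p$ well defined on all of $\mathbb R^d$. Since any two admissible subdivisions $k$ and $m$ admit $km$ as a common refinement, it suffices to compare the value from $k$ subdivisions with that from $kl$ subdivisions. Writing $w'=s/(kl)$ and grouping the $kl$ summands into $k$ consecutive blocks of length $l$, the group law for $\Psi$ rewrites each block as $\sum_{b=0}^{l-1} z\big(w',\,\Psi(b\,w',\,y_a)\big)$ with $y_a=\Psi(a\,l\,w',\,x)$, and an inner induction on $l$ collapses this block, via the cocycle identity (II), to $z(l\,w',\,y_a)=z\big(s/k,\,\Psi(a\,s/k,\,x)\big)$. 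Summing over the $k$ blocks recovers exactly the value produced by $k$ subdivisions. The delicate point throughout is to verify that all partial sums $b\,w'$ remain inside $\overline{B_\mu(0)}$ so that (II) is applicable, which holds since $\|b\,w'\|\le\|w\|<\mu$.

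Continuity of $p$ is then immediate: near a fixed $s$ one may use a single $k$ with $\|s'/k\|<\mu$ for all nearby $s'$, whence $p(s',x)=\sum_{i=0}^{k-1} z\big(s'/k,\,\Psi(i\,s'/k,\,x)\big)$ is a finite sum depending continuously on $(s',x)$ by continuity of $z$ and of $\Psi$.

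Finally, for uniqueness, suppose $p_1,p_2$ are continuous extensions with $\Phi(p_1(s,x),x)=\Psi(s,x)=\Phi(p_2(s,x),x)$. Then $\alpha_x(s):=p_1(s,x)-p_2(s,x)$ is continuous in $s$, vanishes on $\overline{B_\mu(0)}$ (both maps equal $z$ there), and satisfies $\Phi(\alpha_x(s),x)=x$, so $\alpha_x(s)$ belongs to the isotropy subgroup $\{v\in\mathbb R^d:\Phi(v,x)=x\}$. By Lemma~\ref{e0acao} this subgroup contains no nonzero vector of norm below $\varepsilon_0(\Phi)>0$, hence it is discrete; a continuous map from the connected space $\mathbb R^d$ into a discrete set that vanishes at the origin must be identically zero. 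Therefore $p_1\equiv p_2$, which completes the proof of the lemma.
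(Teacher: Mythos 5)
Your proposal is correct, and the existence part is essentially the paper's argument in a different parametrization: where the paper fixes a dyadic step $1/2^N<\mu$ and telescopes the cocycle identity of Lemma~\ref{singepsilon} along each ray $t\mapsto tv$, $v\in\mathbb S^{d-1}$, via the functions $z_k$ defined on the annuli $D_k$ (checking compatibility at the annulus boundaries), you subdivide the segment $[0,s]$ into $k$ equal pieces with $k$ depending on $s$ and instead check independence of $k$ by passing to the common refinement $km$; the two bookkeeping burdens are comparable and both rest on the same telescoping of property (II). Your verification that all partial sums $bw'$ stay in $\overline{B_\mu(0)}$ is exactly the point that needs care, and you handle it. Where you genuinely depart from the paper is in the uniqueness step: the paper argues by maximality along rays, taking $t_0=\sup\{t>0:\ tv\in\alpha_x^{-1}(0)\}$ and extending the coincidence set by $\mu$ using the local uniqueness of $z$ at the shifted basepoint $x'=\Phi(p_i(t_0v,x),x)$; you instead observe (using commutativity of the action) that $\alpha_x(s)=p_1(s,x)-p_2(s,x)$ takes values in the isotropy subgroup of $x$, which by Lemma~\ref{e0acao} (or, more precisely, by the standing hypothesis $0<\varepsilon<\varepsilon_0(\Phi)/3$, which forces $\varepsilon_0(\Phi)>0$) contains no nonzero vector of norm below $\varepsilon_0(\Phi)$ and is therefore discrete, so that continuity and connectedness of $\mathbb R^d$ force $\alpha_x\equiv\alpha_x(0)=0$. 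Your route is shorter and avoids invoking the local uniqueness statement at a translated point, at the cost of making explicit use of the positivity of $\varepsilon_0(\Phi)$ and the group structure of the set of periods; both arguments are valid under the lemma's hypotheses.
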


\begin{proof} 
The strategy of the proof is similar to the one of Proposition~\ref{extencao}, that is, to extend the local
reparameterization given in Lemma \ref{singepsilon} to a function $p : \mathbb{R}^d \times \mathbb T^n \rightarrow \mathbb{R}^d$ such that $\Psi(s, \, x) = \Phi(p(s, \, x), \, x)$ for all $(s, \, x) \in \mathbb{R}^d \times \mathbb T^n$. 
It is worthwhile to note that although there are are several ways to extend the reparameterization it is unique.
 The extension here is made radial by considering observing vectors 
in $\mathbb{R}^{d}$ as multiples of vectors in the unit sphere  $\mathbb{S}^{d-1}$. We shall
sketch the main differences and omit some details. 
For this, let $\mu>0$ be given by Lemma~\ref{singepsilon}, let $N \in \mathbb{N}$ such that $2^{-N} < \mu$ and fix
$v \in \mathbb{S}^{d-1}$.
For each $k \in \mathbb{N}$ let $D_k = \{z \in \mathbb{R}^d: \, \frac{k}{2^ N} \leq \|z\| \leq \frac{k+1}{2^N}\}$, which
contain the vectors of the form $u = t v \in \mathbb{R}^d$ for $t \in [k/2^N, \, (k+1)/2^N]$ 
Now, consider the functions $z_k: \, D_k \times \mathbb T^n \rightarrow \mathbb{R}^d$ given by
\begin{equation}\label{defzk}
z_k(t \cdot v, \, x) = z((t - k/2^N) \cdot v, \, x) + \sum\limits_{i=1}^k z(1/2^N \cdot v, \Psi((t - i/2^N) \cdot v, \, x)),
\end{equation}
\noindent{for every $k \in \mathbb{N}$.}
By Lemma \ref{singepsilon} and the definition of $z_k$, it follows that $z_k$ is continuous and satisfies 
\begin{align}\label{defzk1}
 z_k\big(\frac{k+1}{2^N} \cdot v, \, x\big)  = z_{k+1}\big(\frac{k+1}{2^N} \cdot v, \, x\big)
 	\quad\text{and}\quad 
 \Psi(t v, \, x) = \Phi(z_k(t v, \, x), \, x)
\end{align}
for all $x \in \mathbb T^n$, $k \in \mathbb{N}$ and $t \in [k/2^N, \, (k+1)/2^N]$. 
This allows to define the continuous map $p : \mathbb{R}^d \times \mathbb T^n \rightarrow \mathbb{R}^d$ given by
$$p(t \cdot v, \, x) = \left\{ \begin{array}{cl}
z(t \cdot v, \, x),              & \mbox{if } t \in [0, \, 1/2^N], v \in \mathbb{S}^{d-1} \\
z_k(t \cdot v, \, x),            & \mbox{if } t \in \left[k/2^N, \, (k+1)/2^N\right], \, v \in \mathbb{S}^{d-1}, \, k \in \mathbb{N}\\
\end{array} \right.,$$
The continuity of $p$  follows from relation (\ref{defzk1}) and Lemma~\ref{singepsilon}. Moreover, since 
$v\in \mathbb{S}^{d-1}$ was chosen arbitrary then Lemmas~\ref{singepsilon} and \ref{lemaextensaoacao} imply 
that $p$ satisfies
$\Psi(t  \cdot v, \, x) = \Phi(p(t \cdot v, \, x), \, x)$ for all $x \in \mathbb T^n$, $t \in \mathbb{R}$, $v \in \mathbb{S}^{d-1}$.

To prove the uniqueness of the reparameterization $p$, assume that there are continuous reparameterizations 
$p_1, \, p_2 :\, \mathbb{R}^d \times \mathbb T^n \rightarrow \mathbb{R}^d$ that extend $z$ and such that 
$\Phi(p_1(u, \, x), \, x) = \Psi(u, \, x) = \Phi(p_2(u, \, x), \, x)$ for any $(u, \, x) \in \mathbb{R}^d \times \mathbb T^n$.
Fix $x \in \mathbb T^n$ and let $\alpha_x(u) = p_1(u, \, x) - p_2(u, \, x)$. Observe that $\alpha_x^{-1}(0) \supset \overline{B_{\mu}(0)}$ and, consequently, $\alpha_x^{-1}(0) \neq \emptyset$. Moreover, since $\alpha_x$ is continuous then $\alpha_x^{-1}(0)$ is a closed subset of $\mathbb R^d$.
We claim that $\alpha_x^{-1}(0)=\mathbb{R}^d$.  If $\alpha_x^{-1}(0) \neq \mathbb{R}^d$, there would be a unit vector $v \in \mathbb{S}^{d-1}$ and $t_0 = \sup\{t > 0: \, t \cdot v \in \alpha_x^{-1}(0)\} <\infty$. Since $\alpha_x^{-1}(0)$ is a closed subset then $ t_0 \cdot v \in \alpha_x^{-1}(0)$.
Recalling the previous discussion, setting $x{'} = \Phi(p_1(t_0 \cdot v, \, x), \, x) \, (= \Phi(p_2(t_0 \cdot v, \, x), \, x))$, 
it follows that $\alpha_{x^{'}}$ is identically zero in $\overline{B_{\mu}(0)}$. Thus $\alpha_x$ is identically zero in 
$\{t \cdot v: \, t \in [0, \, t_0 + \mu]\}$, which contradicts the maximality of $t_0$.
This proves the claim and the uniqueness of the reparameterization. 
\end{proof}

The next lemma will complete the proof of Theorem~\ref{centacaoespansiva}.

\begin{lemma}\label{lemainvorbitas} 
Let $p$ be the reparameterization given by Lemma \ref{lemaextensaoacao}. Then $p$ is invariant along the orbits of $\Phi$, that is, $p(v, \, x) = p(v, \, \Phi(u, \, x))$ for all $u, \, v \in \mathbb{R}^d$ and $x \in \mathbb T^n$.
Moreover, there exists a $C^1$-map $\mathbb T^n \ni x \mapsto A(x) \in \mathcal M_{d\times d}(\mathbb{R})$ 
so that $p(v, x)=A(x) v$ for all $x\in \mathbb T^n$ and $v\in \mathbb R^d$.
\end{lemma}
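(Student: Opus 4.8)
The plan is to mirror, step by step, the proof of Lemma~\ref{lemainvorbitas1}, upgrading each argument from one‑dimensional flows to $\mathbb R^d$-actions. First I would establish the orbit invariance $p(v,x)=p(v,\Phi(u,x))$ starting from the commutation relation satisfied by $\Psi\in\mathcal Z^1(\Phi)$: for $x\in\mathbb T^n$ and $u,v\in\mathbb R^d$,
\begin{align*}
\Phi(u+p(v,x),x) &= \Phi(u,\Phi(p(v,x),x)) = \Phi(u,\Psi(v,x)) \\
&= \Psi(v,\Phi(u,x)) = \Phi\big(u+p(v,\Phi(u,x)),x\big).
\end{align*}
Thus the vector $p(v,x)-p(v,\Phi(u,x))$ is a period of the common point above. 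For $\|v\|\le\mu$ both reparameterizations lie in $B_\varepsilon(0)$ by Lemma~\ref{singepsilon}, so this difference has norm $<2\varepsilon<\varepsilon_0(\Phi)$ and must vanish by Lemma~\ref{e0acao}, giving invariance for $\|v\|\le\mu$. To reach arbitrary $v$ I would feed this local invariance into the radial recursion \eqref{defzk}: replacing $x$ by $\Phi(u,x)$ and using $\Psi(\cdot,\Phi(u,x))=\Phi(u,\Psi(\cdot,x))$ in each summand, an induction on $k$ yields $z_k(tv,\Phi(u,x))=z_k(tv,x)$, hence $p(v,\Phi(u,x))=p(v,x)$ for every $v\in\mathbb R^d$.

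Second, I would derive additivity of $p(\cdot,x)$. Using that $\Psi$ is an $\mathbb R^d$-action together with the invariance just obtained,
\begin{align*}
\Phi(p(u+v,x),x) &= \Psi(u+v,x) = \Psi(u,\Psi(v,x)) \\
&= \Phi\big(p(u,\Phi(p(v,x),x)),\Phi(p(v,x),x)\big) \\
&= \Phi\big(p(u,x)+p(v,x),x\big),
\end{align*}
and the uniqueness of $p$ from Lemma~\ref{lemaextensaoacao} forces $p(u+v,x)=p(u,x)+p(v,x)$. Since $v\mapsto p(v,x)$ is continuous and additive on $\mathbb R^d$, Cauchy's functional equation makes it $\mathbb R$-linear, so there is a unique matrix $A(x)\in\mathcal M_{d\times d}(\mathbb R)$ with $p(v,x)=A(x)v$, and the orbit invariance of $p$ becomes $A(\Phi(u,x))=A(x)$.

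It remains to promote $A$ to a $C^1$-map, and this is the step I expect to demand the most care. Rather than solving for the large vectors $p(e_i,x)$ directly, I would exploit linearity to reduce to small parameters: it suffices to show $x\mapsto p(v,x)$ is $C^1$ for a fixed basis $v^{(1)},\dots,v^{(d)}$ with $\|v^{(j)}\|\le\mu$, because $A(x)$ is then recovered from the columns $p(v^{(j)},x)$ by a fixed invertible change of basis. For such small $v$ the point $\Psi(v,x)=\Phi(p(v,x),x)$ stays inside a flowbox of $x$, which is exactly what the lower bound $\varepsilon_0(\Phi)>0$ of Lemma~\ref{e0acao} ensures. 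I would then invoke Lemma~\ref{acaotubular} to obtain $C^1$-coordinates $h=(s_1,\dots,s_n)$ near a point $x_0$ in which $\Phi$ merely translates the first $d$ coordinates; writing $q=(s_1,\dots,s_d)$, one reads off $p(v,x)=q(\Psi(v,x))-q(x)$ for $x$ near $x_0$. As $\Psi$, $h$ and $q$ are $C^1$, this exhibits $p(v,\cdot)$ as $C^1$ on a neighborhood of $x_0$, and covering $\mathbb T^n$ by such charts yields $A\in C^1(\mathbb T^n,\mathcal M_{d\times d}(\mathbb R))$, completing the proof and hence that of Theorem~\ref{centacaoespansiva}.
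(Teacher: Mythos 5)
Your argument is correct and, for the main body of the lemma, coincides with what the paper does: the paper simply declares that ``the arguments of Lemma~\ref{lemainvorbitas1} yield'' the additivity identity and then invokes uniqueness and continuity to obtain linearity, whereas you have actually carried out that transposition (orbit invariance for small $v$ via the period bound of Lemma~\ref{e0acao}, propagation along the radial recursion, then additivity and Cauchy's functional equation), which is exactly the intended route. The one place where you genuinely diverge is the regularity of $A$: the paper gets differentiability from the implicit function theorem applied to $F_v(C,z)=\Phi(Cv,z)-\Psi(v,z)$, as in the remark following the proof of Theorem~\ref{thm:flows}, while you read off $p(v,\cdot)=q(\Psi(v,\cdot))-q(\cdot)$ in the canonical coordinates of Lemma~\ref{acaotubular} and recover $A(x)$ from the columns $p(v^{(j)},x)$ for a basis of small vectors. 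Your version is arguably the cleaner one in the $\mathbb R^d$ setting, since the implicit function theorem argument requires checking that the derivative of $C\mapsto\Phi(Cv,z)$ is injective (which uses homogeneity) and composing with a projection, all of which your flowbox formula makes unnecessary. One small imprecision: it is not the positivity of $\varepsilon_0(\Phi)$ per se that keeps $\Phi(p(v,x),x)$ inside the chart, but rather the continuity of $z$ together with $z(0,\cdot)=0$ from Lemma~\ref{singepsilon} (shrink $\|v\|$ locally uniformly in $x$); this is cosmetic and does not affect the proof.
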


\begin{proof} 
The arguments of Lemma~\ref{lemainvorbitas1} yield
$\Phi(p(v  + u, \, x), \, x)  = \Phi(p(v, \, x) + p(u, \, x), \, x)$
for every $u,v \in \mathbb{R}^d$ and $x\in \mathbb T^n$.
The uniqueness of $p$ implies that 
$p(v  + u, \, x) = p(v, \, x) + p(u, \, x) \quad \text{for all $u, \, v \in \mathbb{R}^d$ and $x\in \mathbb T^n$}$.
By the later and the
continuity of $p$ we conclude that there exists a continuous map $\mathbb T^n \ni x \mapsto A(x) \in \mathcal M_{d\times d}(\mathbb{R})$ 
so that $p(v, x)=A(x) v$ for all $x\in \mathbb T^n$ and $v\in \mathbb R^d$. The differentiability of $A$ follows from the
implicit function theorem as in the end of the proof of Theorem~\ref{thm:flows}. This finishes the proof of the lemma.
\end{proof}

In the remaining of this section we provide a geometric characterization of the linear reparameterization obtained 
in Theorem~\ref{centacaoespansiva}, which is of independent interest in the case of a
$C^1$ and expansive $\mathbb R^d$-action on a compact manifold $M$ of dimension $n$ larger than $d$. We
prove that the reparameterization can be written as a matrix of change of coordinates.

\begin{proposition}\label{Acomblinear} 
Let $\Phi: \mathbb{R}^d \times M \rightarrow \mathbb T^n$ be a $C^1$-expansive and homogeneous action, $n\ge d$, and 
let $\Psi \in \mathcal{Z}^{1}(\Phi)$ be such that $\Psi(v, \, x) = \Phi(A(x) v, \, x)$ for all $v\in \mathbb R^d$ and $x\in \mathbb T^n$.
Considering the vector fields $X_i(\cdot) = \frac{d \Phi(t e_i,\cdot)}{dt} \mid_{t=0}$ and $Y_i(\cdot) = \frac{d \Psi(t e_i,\cdot)}{dt} \mid_{t=0}$ for every $1\le i \le d$, for each $x \in \mathbb T^n$ the linear map $A(x)$ is represented by the matrix of representation of the vectors  $(Y_i(x))_{1\le i \le d}$ on the basis $(X_i(x))_{1\le i \le d}$. 
\end{proposition}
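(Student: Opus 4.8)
The plan is to read the matrix $A(x)$ off directly from the defining identity $\Psi(v,x)=\Phi(A(x)v,x)$ by differentiating it along the coordinate directions at the identity of $\mathbb{R}^d$. First I would record that the statement is well posed: since $\Phi$ is homogeneous, the generating vector fields $X_1(x),\dots,X_d(x)$ are linearly independent at every $x\in\mathbb{T}^n$ (this is the very characterization of homogeneity recalled before Theorem~\ref{centacaoespansiva}), so $(X_i(x))_{1\le i\le d}$ is a basis of the tangent space to the orbit of $\Phi$ through $x$. Moreover the identity $\Psi(v,x)=\Phi(A(x)v,x)$ forces each curve $t\mapsto\Psi(te_i,x)$ to lie on that orbit, so $Y_i(x)$ is tangent to it as well; hence expressing $(Y_i(x))_i$ in the frame $(X_j(x))_j$ makes sense.

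The core of the argument is a single application of the chain rule. Writing $D_v\Phi(0,x)$ for the differential of the smooth map $v\mapsto\Phi(v,x)$ at $v=0$, the definition of the generators gives $D_v\Phi(0,x)\,e_j=X_j(x)$, and therefore $D_v\Phi(0,x)\,w=\sum_{j=1}^d w_j\,X_j(x)$ for every $w=\sum_j w_j e_j\in\mathbb{R}^d$ by linearity. Fixing $x$ and $i$ and restricting $\Psi(v,x)=\Phi(A(x)v,x)$ to the line $v=te_i$, I would use that $A(x)$ is a fixed linear map, so $A(x)(te_i)=t\,A(x)e_i$, and differentiate at $t=0$:
\begin{align*}
Y_i(x)=\frac{d}{dt}\Psi(te_i,x)\Big|_{t=0}
 =\frac{d}{dt}\Phi\big(t\,A(x)e_i,x\big)\Big|_{t=0}
 =D_v\Phi(0,x)\,\big[A(x)e_i\big].
\end{align*}
Writing $A(x)e_i=\sum_{j=1}^d A(x)_{ji}\,e_j$ for the $i$-th column of $A(x)$ and inserting it into the formula above yields $Y_i(x)=\sum_{j=1}^d A(x)_{ji}\,X_j(x)$.

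This last identity is exactly the assertion of the proposition: the $i$-th column of $A(x)$ consists of the coordinates of $Y_i(x)$ in the basis $(X_j(x))_{1\le j\le d}$, so $A(x)$ is precisely the matrix representing the frame $(Y_i(x))_i$ in the frame $(X_i(x))_i$. I do not expect a genuine obstacle here, since the whole computation is the chain rule once the change-of-frame interpretation is set up. The only point that deserves care is the justification that the infinitesimal generator of $\Phi$ in a direction $w$ equals $\sum_j w_j X_j$; this follows from the commutativity of the $\mathbb{R}^d$-action, which lets one write $\Phi(w,\cdot)$ as a composition of the commuting flows of the $X_j$, together with homogeneity, which guarantees that $(X_i(x))_i$ is a frame and hence that the coordinate representation is unique.
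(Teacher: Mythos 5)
Your proof is correct, and it takes a genuinely more direct route than the paper's. You treat the identity $\Psi(v,x)=\Phi(A(x)v,x)$ as the given datum and simply differentiate it along $v=te_i$ at $t=0$: since $D_v\Phi(0,x)e_j=X_j(x)$ and the differential is linear, the chain rule gives $Y_i(x)=D_v\Phi(0,x)\bigl[A(x)e_i\bigr]=\sum_{j}A(x)_{ji}X_j(x)$, which is exactly the change-of-frame statement, and homogeneity guarantees that $(X_j(x))_{1\le j\le d}$ is a frame of the orbit tangent space so the representation is unique. The paper instead invokes the canonical form for commuting vector fields (Lemma~\ref{acaotubular}) to straighten $X_1,\dots,X_d$ to the constant fields $e_1,\dots,e_d$ near $x$, proves an auxiliary claim that a $C^1$ map of $\mathbb{R}^d$ commuting with all translations is itself a translation in order to conclude that $\Psi$ acts by translations along each orbit plaque (hence that the $Y_i$ are constant in those coordinates), and only then reads off the matrix relating the two constant frames. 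Your argument dispenses with the local normal form and the translation claim entirely; what the paper's longer route buys is the extra geometric picture that $\Phi$ and $\Psi$ are simultaneously straightened to translation actions on each orbit, but that information is not needed to identify $A(x)$, which is already encoded in the hypothesis you differentiate. One small remark: you attribute the identity $D_v\Phi(0,x)w=\sum_j w_jX_j(x)$ to commutativity of the action, but for a $C^1$ action it is simply the linearity of the differential of the map $v\mapsto\Phi(v,x)$ at $v=0$; the decomposition into commuting flows is a valid alternative justification but is not needed.
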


\begin{proof}
The homogeneity assumption assures that the vector fields $X_1, \, \cdots, \, X_d$ are linearly independent.
Fix $x\in \mathbb T^n$. Up to a change of coordinates we may assume without loss of generality
that $\Phi$ is (locally) an $\mathbb R^d$-action on an open set of $\mathbb R^n$ and that 
 $X_i(z)=e_i$ for all $1\le i \le d \; (< n)$. Indeed, Lemma \ref{acaotubular} guarantees that there exists 
an open neighborhood $V_x \subset \mathbb T^n$ of $x$, and a change of coordinates 
$h: V_x \rightarrow h(V_x)  \subset \mathbb{R}^n$ so that $X_i = h^{-1}_* e_i$ for all $1\leq i \leq d$. 
and, still denoting by $\Phi$ the induced action on $h_x(V)$,  one can write
$$
\Phi(v, z)=z + \sum\limits_{i=1}^d v_i \cdot X_i 
$$
for every $z = h(x)\in \mathbb R^d$ and every small values $(v_i)_{1\le i \le d}$ such that 
$\Phi(v, z)$ belongs to $h(V_x)$. Reducing $V_x$ if necessary,  let $\delta>0$ be such that 
$h(V_x)=[-\delta,\delta]^n$.

The first step in the proof of Theorem \ref{centacaoespansiva} implies that the orbits of $\Phi$ are fixed by any 
element $\Psi \in \mathcal{Z}^{1}(\Phi)$. Moreover, since $\Phi$ restricted to each of its orbits is generated by the $d$ 
constant vector fields $X_i(z) = e_i$ ($1 \leq i \leq d$) then $\Phi$ acts in each of its (local) orbits as a group of translations 
in $\mathbb{R}^d$. Indeed, in the linearization coordinates, the (local) orbit of $z\in h(V_x)$ is 
$\cF(z)=\{ w\in [-\delta,\delta]^n \colon w = z + t e_i \, \text{for some} \, 1\le i \le d \; \text{and}\, t\in \mathbb R\}$.
Then, for any given  small translation vector $g$ in $\cF(z) \simeq \mathbb{R}^d$ there exists a vector $u = u_g$ such that $\Phi(u_g, \, z) = z+ g$. 
In summary, for every $z \in [-\delta,\delta]^n$ the map $\Psi\mid_{\cF(z)}$ commutes with all local translations in $\cF(z)$.

Now we prove that in these linearization coordinates the action $\Psi \in \mathcal{Z}^1(\Phi)$ is also a translation along the orbit $\cF(z)$. Since $\cF(z)\simeq [-\delta,\delta]^n \subset \mathbb R^n$ this is an immediate consequence of the following:

\medskip \noindent 
{\bf Claim:} \emph{
If $f: \mathbb{R}^d \rightarrow \mathbb{R}^d$ is $C^1$
and commutes with all translations in $\mathbb{R}^d$ then $f$ is itself a translation in $\mathbb{R}^d$.}

\begin{proof}[Proof of the claim]
One can write in coordinate functions  
$$f(x_1, \, x_2, \, \cdots, \, x_d) = (f_1(x_1, \, x_2, \, \cdots, \, x_d), \, \cdots, \, f_d(x_1, \, x_2, \, \cdots, \, x_d) )$$
for $(x_1, \, x_2, \, \cdots, \, x_d) \in \mathbb{R}^d$.  Since $f$ commutes with all the translations of form $x + \lambda \, e_j$, for $x = (x_1, \, \cdots, \, x_d)$, $\lambda \in \mathbb{R}$ and where $\{e_1, \, \cdots, \, e_d\}$ denotes the
canonical basis in $\mathbb{R}^d$, then $f(x + \lambda \, e_j) = f(x) + \lambda \, e_j$. Analyzing each coordinate
independently, this means that $f_i(x + \lambda \, e_j) = f_i(x) + \lambda \, \delta_{ij}$ for all $1\le i, j \le d$, where 
$\delta_{ij}=1$ if $i=j$ and $\delta_{ij}=0$ otherwise. Thus, 
$$
\dfrac{\partial f_i}{\partial x_j} (x) = \lim\limits_{\lambda \rightarrow 0}\frac{f_i(x + \lambda \, e_j) - f_i(x)}{\lambda} = \delta_{ij}
$$ 
and, consequently, $f_i(x) = x_i + r_i$ for $r_i=f_i(0) \in \mathbb R$.  This guarantees that $f(x_1, \, x_2, \, \cdots, \, x_d) = (x_1, \, x_2, \, \cdots, \, x_d) + (r_1, \, r_2, \, \cdots, \, r_d)$ is a translation and completes the proof of the claim.

\end{proof}

We are now in a position to complete the proof of the proposition. 
The previous claim implies that $\Psi(u, \, y)$ is a translation for all $(u, \, y) \in U$. In particular, if $\{u_1, \, \cdots, u_d\}$ is a basis of $\mathbb{R}^d$, the flows $(\Psi_{t \cdot u_i})_{t \in \mathbb{R}}$ are flows of translations. Indeed, by the 
group property of $(\Psi_{t \cdot u_i})_{t \in \mathbb{R}}$  there exists a vector $w_i$ such that $\Psi_{t \cdot u_i}(x) = x + t w_i$ and consequently the $d$ vector fields  $Y_1, \, Y_2, \, \cdots, \, Y_d$ defining $\Psi$ are constant.
Then, one can write
$$\left\{
\begin{array}{c}
Y_1 = a_{11} X_1 + a_{21} X_2 + \cdots + a_{d1} X_d \\
Y_2 = a_{12} X_1 + a_{22} X_2 + \cdots + a_{d2} X_d \\
                       \vdots                       \\
Y_d = a_{1d} X_1 + a_{2d} X_2 + \cdots + a_{dd} X_d \\
\end{array} \right.
$$
on $U$ and let
$$
A = \left[
\begin{array}{cccc}
a_{11} & a_{12} & \cdots & a_{1d} \\
a_{21} & a_{22} & \cdots & a_{2d} \\
\vdots & \vdots & \ddots & \vdots \\
a_{d1} & a_{d2} & \cdots & a_{dd} \\
\end{array} \right].
$$
Then, writting $v= \sum_{i=1}^d r_i Y_i \in \mathbb R^d$ as a linear combination of the vectors on the
base $(Y_i)_{1\le i \le d}$,
\begin{align*}
\Psi(v, \, z) & 
		= z + \sum\limits_{j = 1}^d r_j Y_j
               = z + \sum\limits_{j = 1}^d r_j \Big[\sum\limits_{i = 1}^d a_{ij} X_i \Big]  \\
               & = z + \sum\limits_{i = 1}^d \Big[ \sum\limits_{j = 1}^d r_j \, a_{ij} \Big] \cdot X_i 
               = z + A  v 
               = \Phi(A v, \, z)
\end{align*}
for all $z\in U$. This proves that $A(x) = Dh(x)^{-1} \, A \, Dh(x)$ where $A$ is the previous matrix of change of coordinates
that determines the action $\Psi$ as reparameterization of action $\Phi$. This finishes the proof of the proposition.
\end{proof}

\section{Examples and applications}\label{sec:examples}

This section is devoted to present some applications of our main results and a discussion on other notions of
expansiveness. 

\begin{example}(Suspension flows)
Given a homeomorphism $f: M \to M$ on a compact metric space $M$ and a continuous roof function $r: M \to \mathbb R^+$ that is bounded away from zero consider the quotient space 
$$
M_r=\{(x,s) \in M \times \R^{+} : 0 \leq s \leq r(x)\} / \sim
$$
obtained by the equivalence relation that $(x,r(x)) \sim (f(x),0)$ for every $x \in M$. The
suspension flow $(\varphi_t)_t$ on $M_r$ associated to $(f, M, r)$ is defined by 
the ``vertical displacement'' $\varphi_t(x,s)=(x,t+s)$ whenever the expression is well defined. More precisely,
$
\varphi_t(x,s)
	=\big(f^k(x), t+s-\sum_{j=0}^{k-1} r(f^j(x)) \big)
$
where $k=k(x,t,s)\in \mathbb Z$ is determined by
$
\sum_{j=0}^{k-1} r(f^j(x)) \leq t+s < \sum_{j=0}^{k} r(f^j(x)).
$
Clearly $M\times\{0\}$ is a global cross-section to the flow. It follows from \cite{BW72} that $f$ is expansive if and only if the flow $(\varphi_t)_t$ is $C$-expansive. In particular, from Theorem~\ref{thm:flows}, the suspension flow of any expansive homeomorphism (e.g. quasi-Anosov diffeomorphisms with intermittency) or Axiom A  flows restricted to the non-wandering set have quasi-trivial centralizers.
\end{example}

\begin{example}(Lorenz attractors) \label{ex:Lorenz}
The Lorenz equations correspond to the system of polynomial ordinary differential equations in $\mathbb{R}^3$ 
\begin{equation}\label{lorenz}
\left\{ \begin{array}{l}
\frac{dx}{dt} = a (y - x) \\
\frac{dy}{dt} = - x z + r x - y \\
\frac{dz}{dt} = x y - b z\\
\end{array}\right.,
\end{equation}
with parameters $a, \, b, \, r \in \mathbb{R}$. Computer simulations led Lorenz \cite{Lorenz} to propose the existence 
of a ``strange attractor''  for the parameters $a = 10$, $b = 8/3$ and $r = 28$. 
For the classical parameters proposed by Lorenz, the three singularities of the equation \eqref{lorenz} are hyperbolic, and $\sigma_0$ belongs to the ``chaotic attractor'' and is accumulated by orbits of regular points.  
Simple computations yield that 
the eigenvalues of $\sigma_0$ are $\dfrac{-11 -\sqrt{1201}}{2} \approx -22,83$; $-\frac{8}{3} \approx -2,67$ and $\dfrac{-11 +\sqrt{1201}}{2} \approx 11,83$.
By the symmetry of the equations \eqref{lorenz}, the eigenvalues of $\sigma_1$ and $\sigma_2$ are the same.  
The singularity $\sigma_1$ 
has a real eigenvalue $\lambda \approx -13,85$ and two complex conjugates eigenvalues $z, \bar z$
where $z\approx 0.09 + 10.19 i$. 
In particular the singularities of \eqref{lorenz} satisfy the non-resonant conditions. Indeed, 
the singularity $\sigma_0$ is non-resonant since the unstable subspace is one-dimensional and the stable subspace 
of $\sigma_0$ is non-resonant because one eigenvalue is rational and the other is irrational. Finally,
the singularities $\sigma_1$ and $\sigma_2$ are non-resonant since their stable subspace 
is one-dimensional and has a pair of complex conjugate eigenvalues of along the unstable subspace.

In order to be able to describe the dynamical features of the `chaotic attractor' associated to the ODE~\eqref{lorenz},
geometric Lorenz attractors were introduced independently in \cite{lorAfraimovich,lorGuckenheimer} (see
Figure~3 below).
These form a parametrized family of vector fields, whose parameters correspond to the real eigenvalues $\lambda_1 < \lambda_2 <0 <-\lambda_2 <\lambda_3$ at the singularity $\sigma_0=(0,0,0)$. 

There exists a $C^1$-open subset of vector fields 
$\cU \subset \mathfrak{X}^\infty(\mathbb R^3)$ and an open elipsoide $V \subset \mathbb R^3$ containing the origin 
such that every $X\in \cU$ exhibits  a  geometric Lorenz attractor $\Lambda_X = \bigcap_{t\ge 0} \overline{X_t(V)}$, which is 
a partially hyperbolic attractor and whose restriction of the flow to the attractor
is Komuro expansive (see e.g. \cite{vitorzeze} for precise definitions and proofs). 
Such construction can be performed in an open domain of a compact manifold $M$ and if this is the case we 
will say that $X\in \mathfrak{X}^\infty(M)$ has a geometric Lorenz attractor. 
Since the non-ressonance condition for the singularity $\sigma_0$ is satisfied for both the original parameters proposed
by Lorenz and is a $C^1$-open and $C^\infty$ dense condition on the space of vector fields in $\cU$, the following is an immediate consequence of Corollary \ref{cor:centr}: 

\begin{corollary}
Let $\mathcal U \subset \mathfrak{X}^\infty(M)$ be an open set of vector fields so that every $X\in U$ has a geometric
Lorenz attractor $\Lambda_X$.
Then there exists a $C^1$-open and $C^\infty$-dense subset $\cU'\subset \cU$ so that, every vector field
$X\in \cU'$ admits a geometric Lorenz attractor $\Lambda_X$ whose centralizer on its topological 
basin of attraction is trivial.
\end{corollary}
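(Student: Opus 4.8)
The plan is to deduce the corollary from the mechanism behind Corollary~\ref{cor:centr}, namely Theorem~\ref{thm:flows} combined with Lemma~\ref{extencaosing}, applied to the \emph{proper} invariant set $\Lambda_X$ rather than to all of $M$, and then to transport triviality from $\Lambda_X$ to its basin. First I would set $\cU':=\{X\in\cU:\ \sigma_0\text{ is non-resonant}\}$. By the analysis in Example~\ref{ex:Lorenz} the non-resonance of the Lorenz singularity $\sigma_0$ is a $C^1$-open and $C^\infty$-dense condition on $\cU$, and since $\cU$ is $C^1$-open the set $\cU'$ is $C^1$-open and $C^\infty$-dense, as claimed. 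Throughout I will use the standard facts that, for $X\in\cU$, the attractor $\Lambda_X$ is connected, $\varphi|_{\Lambda_X}$ is Komuro-expansive, the only singularity of $\varphi$ in the trapping region is $\sigma_0$, and $\Lambda_X=\overline{W^u(\sigma_0)}$.

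Next, fix $X\in\cU'$, let $B=W^s(\Lambda_X)$ denote the topological basin, and take $\psi\in\mathcal Z^\infty(\varphi|_B)$. Since $\psi$ commutes with $\varphi$ it maps the maximal attractor $\Lambda_X$ onto itself (any self-conjugacy of the flow preserves it), so $\psi|_{\Lambda_X}\in\mathcal Z^\infty(\varphi|_{\Lambda_X})$. Komuro-expansiveness of $\varphi|_{\Lambda_X}$ together with hyperbolicity and non-resonance of $\sigma_0$ let me invoke Theorem~\ref{thm:flows}: there is a continuous, orbit-invariant $A:\Lambda_X\to\mathbb R$ with $\psi_t(x)=\varphi_{A(x)t}(x)$. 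Running Case~2 of Lemma~\ref{extencaosing} at the saddle $\sigma_0$ (Sternberg linearization together with the linear version of Kopell's theorem, Lemma~\ref{kopellflows}) shows that $A$ is constant, equal to some $c\in\mathbb R$, on $W^u(\sigma_0)\setminus\{\sigma_0\}$. As $W^u(\sigma_0)$ is dense in the connected set $\Lambda_X$ and $A$ is continuous, $A\equiv c$, so $\psi_t|_{\Lambda_X}=\varphi_{ct}|_{\Lambda_X}$. This is precisely the argument of Corollary~\ref{cor:centr}, now carried out on the proper invariant set $\Lambda_X$.

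Finally I must propagate triviality from $\Lambda_X$ to $B$. Replacing $\psi$ by $(\varphi_{-cs}\circ\psi_s)_{s}$ I may assume $\psi_s|_{\Lambda_X}=\text{id}$ and have to show $\psi_s=\text{id}$ on $B$. Since every forward orbit in $B$ converges to $\Lambda_X$, the commutation relation $\psi_s\circ\varphi_t=\varphi_t\circ\psi_s$, the identity $\psi_s|_{\Lambda_X}=\text{id}$, and uniform continuity of $\psi_s$ force $d(\varphi_t(\psi_s(x)),\varphi_t(x))\to0$ as $t\to+\infty$; hence $\psi_s(x)$ lies on the strong-stable leaf $W^{ss}(x)$ of the singular-hyperbolic attractor. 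The hard part will be to upgrade this to $\psi_s(x)=x$, because $\varphi|_B$ is not expansive and the local-reparameterization machinery of Section~\ref{sec:flows} is unavailable off $\Lambda_X$. I would exploit that the strong-stable leaves are one-dimensional and that $s\mapsto\psi_s(x)$ is a flow tangent to them commuting with the strong contraction $\varphi_t$; rigidity of commuting contractions on the line, in the spirit of Lemma~\ref{kopellflows}, together with the fact that $\psi_s$ fixes each foot-point on $\Lambda_X$, should force the displacement along each leaf to vanish, exactly as in the treatment of centralizers on basins of hyperbolic attractors in~\cite{Sad2}. Establishing this transverse rigidity is where the real work lies; once it is in place one obtains $\psi_t(x)=\varphi_{ct}(x)$ for every $x\in B$, that is, the centralizer on the basin is trivial.
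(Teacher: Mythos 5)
Your first two steps agree with the paper's route: $\cU'$ is cut out by the non-resonance condition at the singularity, which is $C^1$-open and $C^\infty$-dense in $\cU$, and on the attractor itself Theorem~\ref{thm:flows} together with the constancy of $A$ on $W^u(\sigma_0)\setminus\{\sigma_0\}$ (Lemma~\ref{extencaosing}) and the density of $W^u(\sigma_0)$ in the connected set $\Lambda_X$ give $A\equiv c$ on $\Lambda_X$. The problem is your last paragraph: you explicitly leave open the passage from $\Lambda_X$ to the basin (``establishing this transverse rigidity is where the real work lies''), so what you have written does not prove the stated corollary. That route is also genuinely delicate: after normalizing so that $\psi_s|_{\Lambda_X}=\mathrm{id}$ you only know that $\psi_s(x)$ lies in the stable set of $x$; the strong-stable leaves are permuted, not preserved, by $\varphi_t$, so there is no literal ``commuting contractions on the line'' to which a Kopell-type rigidity applies, and near $\sigma_0$ the leaves sit inside the two-dimensional $W^s(\sigma_0)$, where the picture degenerates further.

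The idea you are missing is the one the paper actually invokes: the corollary is presented as an immediate consequence of Corollary~\ref{cor:centr} in its \emph{stable-manifold} form, i.e.\ with $\Lambda=\overline{W^s(\sigma_0)}$ rather than $\overline{W^u(\sigma_0)}$. For a geometric Lorenz attractor the two-dimensional stable manifold $W^s(\sigma_0)$ is dense in the topological basin $B$ (its traces on a cross-section are the preimages of the discontinuity line of the return map), so $\overline{W^s(\sigma_0)}\supseteq \overline{B}$. Case~1 of Lemma~\ref{extencaosing} (Sternberg linearization plus Lemma~\ref{kopellflows} applied to the contraction $\varphi|_{W^s(\sigma_0)}$) shows the reparameterization $A$ is constant on $W^s(\sigma_0)\setminus\{\sigma_0\}$, and continuity plus density then force $A\equiv c$ on all of $B$; no rigidity transverse to the attractor is needed. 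What this route still presupposes --- and what the paper compresses into ``immediate consequence'' --- is that the conclusion of Theorem~\ref{thm:flows}, namely a continuous orbit-invariant $A$ with $\psi_t=\varphi_{A(\cdot)t}$, is available on a set containing the basin and not merely on the Komuro-expansive set $\Lambda_X$; if you want a complete argument, that is the point to address, and it should be handled through the $W^s$-density mechanism rather than leaf-by-leaf rigidity along the strong-stable foliation.
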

\end{example}

We observe that the argument used in the previous example extends to a more general class
of three-dimensional flows. 

\begin{example}(Robustly transitive three-dimensional sets)
In \cite{MPP}, the authors 
described the structure of all $C^1$ robustly transitive sets with singularities 
for flows on compact Riemannian three-dimensional manifolds. These are partially hyperbolic attractors (or repellers) 
for the vector field with volume-expanding central direction and have an invariant foliation whose 
leaves are forward contracted by the flow, and has positive Lyapunov exponent at every orbit.
These are referred as singular-hyperbolic attractors or repellers.
Singular-hyperbolicity is a $C^1$-open condition.  
Every singular-hyperbolic attractor is Komuro-expansive and an homoclinic class (see \cite{vitorzeze}). 
So Theorem~\ref{thm:flows} implies there exists a $C^1$-open
and $C^\infty$-dense subset of $C^\infty$ singular-hyperbolic attractors 
whose centralizers are continuous reparameterizations of the flow.
\end{example}

One should mention that the singularities of $C^1$-robust Komuro expansive flows are hyperbolic
(cf. \cite{laura}). The following question arises naturally:

\medskip
\noindent {\bf Question~1:} Is the centralizer of Komuro expansive flows with isolated non-hyperbolic singularities trivial?
\medskip

The strategy used here can probably be applied to deal with other notions of expansiveness.
In \cite{artiguekinematicexp}, Artigue introduced some notions of expansiveness that we now recall.  
A flow is called \emph{kinematic expansive} if for all $\varepsilon > 0$ there exists $\delta > 0$ such that if 
$d(\varphi_t(x), \, \varphi_t(y)) < \delta$ for all $t \in \mathbb{R}$, then there exists $s \in (-\varepsilon, \, \varepsilon)$ with 
$y = \varphi_s(x)$. A flow is \emph{strong kinematic expansive} if every continuous reparameterization of the flow is 
kinematic expansive or, equivalently,  all topologically equivalent flows are kinematic expansive.
In \cite{artiguekinematicexp}, the author proves
\begin{equation}
\textrm{K-expansive} \Rightarrow \textrm{strong kinematic expansive} \Rightarrow \textrm{kinematic expansive}.
\end{equation}
Together with \eqref{compexpansiveness}, the later implies that $C$-expansiveness implies kinematic expansiveness. 
By \cite[Theorem 7.5]{artiguekinematicexp}, in the case of non-singular vector flows, the notions of 
$C^1$-robustly kinematic expansive, $C^1$-robustly strong kinematic expansive, $C^1$-robustly expansive, 
K-expansive or C-expansive flows coincide. Moreover, if this is the case such flows have a quasi-trivial centralizer \cite{oka}. 
The next example illustrates that kinematic expansive flows without singularities have quasi-trivial centralizer.

\begin{example}(Kinematic expansive flow with quasi-trivial centralizer) 
Consider $\mathbb S^1= \mathbb R / \mathbb Z$ and 
the flow $\varphi$ on $\mathbb{T}^2$ obtained as the suspension flow of the identity map on $\mathbb{S}^1$ by a smooth and positive smooth function $r: \mathbb S^1 \to (0,+\infty)$ without any plateau. The flow $\varphi$ is kinematic expansive but is not strong kinematic expansive. 
The proof of Lemma~\ref{singepsilon1} carries on for kinematic expansive flows, which guarantees that any element $\psi$
of the $C^1$-centralizer of $\varphi$ is a locally a reparameterization of $\varphi$. Moreover, since $\varphi$ has no singularities, the arguments of Subsection~\ref{sec:extend1} and ~\ref{sec:invarian1} yield
that $\psi$ is a linear reparameterization of $\varphi$. In other words, the centralizer of $\varphi$ is quasi-trivial.
\end{example}

Clearly 
the previous flow can be $C^1$-approximated by a flow that is not kinematic expansive.
In the following example we describe the centralizer of an example of strong kinematic expansive flow with a singularity.

\begin{example}\label{ex:Ar}
Consider an irrational flow on the two-dimensional torus $\mathbb{T}^2 = \mathbb{R}^2/\mathbb{Z}^2$ with vector field 
$X$ and let $f$ be any non-negative smooth function $f$ with just one zero at some point $p \in \mathbb{T}^2$. The flow $\varphi$ generated by the vector field $f X$ is strong kinematic expansive (cf. \cite[Example~2.8]{artiguekinematicexp}) (see
Figure~1 below). Since this flow has a non-hyperbolic singularity then Theorem~\ref{thm:flows} does not apply. In fact, although we do not need this here, it is not hard to show that $\varphi$ is not even Komuro expansive. 
We claim that $\mathcal{Z}^1(\varphi)$ is trivial. In fact, if $\sigma$ the unique singularity of $\varphi$ and 
$\psi \in \mathcal{Z}^1(\varphi)$ then $\psi_s(\sigma) = \sigma$ for all $s \in \mathbb{R}$. In other words, $\sigma$
is a singularity for $\psi$. Moreover, $\psi$ preserves the ($\varphi$-invariant) stable set   
$\mathcal{B}^s(\sigma) := \{y \in \mathbb{T}^2: \, d(\varphi_t(y), \sigma) \to 0$ as $t \rightarrow +\infty\}$. 
This set $\mathcal{B}^s(\sigma)$ is one dimensional it is formed by the orbit of any point in 
$\mathcal{B}^s(\sigma) \setminus\{\sigma\}$. 
As mentioned in the previous example, the arguments used to deduce the existence and uniqueness of 
a continuous and $\varphi$-invariant function $A: \mathbb T^2 \setminus \{\sigma\} \to \mathbb R$ so that
$\psi_t(x)=\varphi_{A(x) t} (x)$ for every $x\in \mathbb T^2 \setminus \{\sigma\}$ and $t\in \mathbb R$.
 Now, since $\mathcal{B}^s(\sigma)$ is dense in $\mathbb{T}^2$ and the function $A$ is constant along orbits of $\varphi$
 then it is constant in $\mathbb T^2 \setminus \{\sigma\}$. Thus, $A$ clearly extends to a constant function on the torus
 $\mathbb T^2$, which proves that there exists $c\in \mathbb R$ so that $\psi_t(x)=\varphi_{ct}(x)$ for every 
 $x\in \mathbb T^2$ and $t\in \mathbb R$. In other words, the $C^1$-centralizer of $\varphi$ is trivial.
\end{example}

In view of the previous example it seems natural to ask the following:

\medskip
\noindent {\bf Question 2:} Do all strong kinematic expansive with singularities have trivial centralizer?
\medskip

Finally, we describe the centralizer of Anosov $\mathbb R^d$-actions.

\begin{example}\label{corAn} (Anosov actions have quasi-trivial centralizer)
Let $M$ be a compact Riemannian manifold and let $\Phi: \mathbb{R}^d \times M \rightarrow M$ be 
an homogeneous Anosov action. Here we show that $\Phi$ has a quasi-trivial centralizer, thus
extending \cite{Kato}.
First we claim that every Anosov $\mathbb R^d$-action on a compact Riemannian manifold $M$ is kinematic expansive. 
This is probably well known but we could not find in the literature.
Let $\cF$ be the $\Phi$-orbit foliation. Then, there exists $v \in \mathbb{R}^d$ such that the diffeomorphism $\Phi_v$ is an Anosov element, hence normally hyperbolically. Let $\overline\delta>0$ be given by the plaque expansiveness of $(\Phi_v,\cF)$ (recall Subsection~\ref{unifh}). Given $\varepsilon > 0$ let $\delta = \min\{\varepsilon, \overline{\delta}\}>0$
and assume that $x, \, y \in M$ satisfy $d(\Phi_u(x), \, \Phi_{u}(y)) < \delta$ for every $u \in \mathbb{R}^d$. 
In particular, the orbits of $x,y$ by $\Phi_v$ differ by at most $\overline \delta$  
(since $d(\Phi_{n v}(x), \, \Phi_{n v}(y)) < \delta \leq \overline{\delta}$ for all $n \in \mathbb{Z}$). Moreover, 
the plaque expansiveness condition implies that $y \in \mathcal{F}(x)$. 
This proves that $y$ belongs to the orbit  of $x$ by $\Phi$ and that $d(x,y)<\vep$.
Thus there exists a vector $w \in \mathbb{R}^d$ such that $||w|| < \varepsilon$ and $y = \Phi_w(x)$, 
consequently the action $\Phi : \mathbb{R}^d \times M \rightarrow M$ is expansive.
Thus, the ingredients in the proof of Theorem~\ref{centacaoespansiva} allow to conclude that $\Phi$ has a quasi-trivial centralizer.
\end{example}

\vspace{.4cm}
\subsection*{Acknowledgements}
The authors are deeply grateful to the anonymous referee for very careful reading of the manuscript and useful advices that 
helped to improve the manuscript. This work is part of the first author's PhD thesis at UFBA. 
W.B. and P.V. were partially supported by BREUDS. J.R. and P.V. were partially supported by CMUP (UID/MAT/00144/2013), which is funded by FCT (Portugal) with national (MEC) and European structural funds (FEDER), under the partnership agreement PT2020. 
W.B and P.V. and grateful to Universidade do Porto, where this work was developed, for the warm hospitality and excellent research conditions.

\end{document}